\documentclass{article}
\usepackage[utf8]{inputenc}

\usepackage{csquotes}
\usepackage[T1]{fontenc}
\usepackage{amsmath}
\usepackage{amssymb}
\usepackage{mathrsfs}
\usepackage{amsthm}
\usepackage{stmaryrd}
\usepackage{geometry}
\usepackage{setspace}
\usepackage{soul}
\usepackage{graphicx}
\usepackage{dsfont}
\usepackage{color}
\usepackage{listings}
\usepackage{xcolor}
\usepackage{float}
\usepackage{mathtools}
\usepackage{array}
\usepackage{biblatex}
\usepackage{enumitem}
\usepackage{pdfpages}

\usepackage{lipsum}

\usepackage{hyperref}
\usepackage[nameinlink,noabbrev]{cleveref}
\usepackage{makeidx}

\usepackage{xcolor}
\definecolor{XProcessBlue}{RGB}{0,33,102}

\hypersetup{
    colorlinks,
    citecolor=black,
    filecolor=black,
    linkcolor=black,
    urlcolor=black
}

\def\W{\mathcal W}

\def\R{\mathbb R}
\def\C{\mathbb C}
\def\N{\mathbb N}

\def\T{\mathbb T}

\def\L{\mathrm L}
\def\H{\mathrm H}

\def\eps{\varepsilon}

\def\Q{\mathrm Q}
\def \F{\mathcal{F}}

\def \p {\partial}
\DeclareMathOperator{\Tr}{Tr}

\DeclareMathOperator{\re}{Re}
\DeclareMathOperator{\im}{Im}

\DeclareMathOperator{\vspan}{span}

\def\d{\mathrm{d}}

\newcommand{\indic}[1]{\mathds{1}_{#1}}
\newcommand{\Atop}[2]{\genfrac{}{}{0pt}{1}{#1}{#2}} 

\newtheorem{theo}{Theorem}[section]
\crefname{theo}{Theorem}{Therorems}
\Crefname{theo}{Theorem}{Therorems}

\newtheorem{prop}[theo]{Proposition}
\crefname{prop}{Proposition}{Propositions}
\Crefname{prop}{Proposition}{Propositions}

\newtheorem{lemme}[theo]{Lemma}
\crefname{lemme}{Lemma}{Lemmas}
\Crefname{lemme}{Lemma}{Lemmas}

\crefname{defi}{Definition}{Definitions}
\Crefname{defi}{Definition}{Definitions}

\newtheorem{cor}[theo]{Corollary}
\crefname{cor}{Corollary}{Corollaries}
\Crefname{cor}{Corollary}{Corollaries}

\newtheorem{hyp}[theo]{Assumption}
\crefname{hyp}{Assumption}{Assumptions}
\Crefname{hyp}{Assumption}{Assumptions}

\crefname{section}{Section}{Sections}
\Crefname{section}{Section}{Sections}

\crefname{equation}{}{}
\Crefname{equation}{}{}

\crefname{remark}{Remark}{Remarks}
\Crefname{remark}{Remark}{Remarks}

\theoremstyle{definition}

\newtheorem{rem}[theo]{Remark}

\numberwithin{equation}{section}

\newcommand{\e}{\mathrm{e}} 
\newcommand{\complexI}{\mathrm{i}} 

\bibliography{biblio.bib}

\title{A simple derivation of Anderson-Hartree equations with harmonic potential}

\author{S. Mackowiak\thanks{Univ Brest,  CNRS UMR 6205,  Laboratoire de Mathématiques de Bretagne Atlantique,  F-29200,  Brest,  France} \thanks{Université de Lorraine,  CNRS,  IECL,  F-54000 Nancy,  France}\\
\href{mailto:samael.mackowiak@univ-lorraine.fr}{samael.mackowiak@univ-lorraine.fr}}

\date{}

\begin{document}

\maketitle

\begin{abstract}

    In this paper, we show how to adapt a derivation of Hartree equations due to Knowles and Pickl in the case where the one-particle hamiltonian is an Anderson-Hermite operator in dimension 1 and 2 or a formal Anderson-Hermite operator in dimension 3. By proving energy estimates for solution of the limit equation, uniform in the noise in bounded sets, we achieve a double limit. Namely, we show that we can commute the mean-field limit with a limit in noise.
    
\end{abstract}

\section{Introduction}

This paper aims to derive mathematically the Anderson-Hartree equation
\begin{equation}\label{Intro:AndersonHartreeEq}
    \complexI\p_t u +(H+\xi) u + w*|u|^2 u = 0
\end{equation}
from the many-body equation
\begin{equation}\label{Intro:ManyBodyEq}
    \complexI\p_t\Psi_N + \sum_{j=1}^N (H+\xi_\eps)_{x_j}\Psi_N + \frac{1}{N}\sum_{1\leqslant i<j\leqslant N}w(x_i-x_j)\Psi_N = 0,
\end{equation}
where $H$ is a one-particle hamiltonian on $\L^2(\R^d)$ ($d\in\{1,2\}$), $w$ is an interaction kernel, $\xi$ is a spatial white noise and $\xi_\eps$ is a smooth approximation of $\xi$. To do this, we have to deal with a double limit, the one in the number of particles $N$ and the one in the regularization parameter $\eps$. Our goal is to show that we can pass to the double limit in $N$ and $\eps$, thus allowing us to commute the limits. For the sake of simplicity, we consider $H=\Delta-|x|^2$ but we can adapt the result to more general one-particle hamiltonians of the form $H=\Delta-V$ with $V\approx |x|^\beta$. We extend our method to a class of formal Anderson-Hermite operators in dimension 3, in order to highlight the dimension dependent part of the approach. Let us point out we do not construct rigorously the 3d Anderson-Hermite operator in this article. We prove the following result. 

\begin{theo}\label{Th:MeanFieldLimitAndersonHartree}
    Let $d\in\{1,2,3\}$, $w\in\mathfrak{W}$ where $\mathfrak{W}$ is defined in \cref{Eq:DefW}, $N\geqslant 2$ and $v_0\in\W^{1,2}$.
    For $\theta=(\xi,Y,Z)\in\Theta$, $\Theta$ being defined by \cref{Def:ThetaAnderson}, let $-A_\theta$ the unbounded self-adjoint operator associated to the bilinear form \cref{Def:AndersonBilinearForm}. Let $u^{\theta,w}$ be the unique maximal solution of \begin{equation}\label{Eq:AndersonHartree}
        \begin{cases}
        \complexI\p_t u + A_\theta u + w*|u|^2 u = 0,\\
        u(0)=u_0^\theta = \e^{Y}v_0.
        \end{cases}
    \end{equation} 
     and $\Psi_{N,\theta,w}$ be the unique solution of 
    \begin{equation}
        \begin{cases}
        \displaystyle\complexI\p_t\Psi_N+\sum_{j=1}^N (A_\theta)_{x_j}\Psi_N + \frac{1}{N}\sum_{1\leqslant i<j\leqslant N}w(x_i-x_j)\Psi_N=0,\\
        \Psi_N(0)=\Psi_{N,\theta}^0=(u_0^\theta)^{\otimes N}.
        \end{cases}
    \end{equation}
    For $k\in\N^*$, let $\Gamma_{k,N,\theta,w}$ be the $k$-particles marginal associated to $\Psi_{N,\theta,w}$ defined by its kernel
    $$\forall x,y\in\R^k,\; \Gamma_{k,N,\theta,w}(x;y) = \int_{\R^{d(N-k)}}\Psi_{N,\theta,w}(x,z)\overline{\Psi_{N,\theta,w}(y,z)}\d z.$$
    Then, for any $k\in\N^*$, it holds
    $$\lim_{N\to+\infty}\Gamma_{k,N,\theta,w}(t) =\left(|u^{\theta,w}(t)\rangle\langle u^{\theta,w}(t)|\right)^{\otimes k}$$
    in trace-class uniformly in $(t,\theta,w)$ in bounded sets of $\R\times\Theta\times\mathfrak{W}$. In particular, one can commute all limits in $N$, $\theta$ and $w$ or pass to the limit in $N$ along sequences $(\theta_N,w_N)$ converging to $(\theta,w)$.
\end{theo}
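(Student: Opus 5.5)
The plan is to adapt the Knowles--Pickl counting method. Introduce the projections $p_j = |u(t)\rangle\langle u(t)|_{x_j}$, $q_j = 1-p_j$, with $u=u^{\theta,w}$, and the functional
$$\alpha_N(t) = \langle \Psi_N(t), q_1 \Psi_N(t)\rangle,$$
which by symmetry of $\Psi_N$ counts the fraction of particles outside the condensate. Knowles and Pickl give the standard bound $\|\Gamma_{k,N} - (|u\rangle\langle u|)^{\otimes k}\|_{\mathfrak S^1} \leqslant C\sqrt{k\,\alpha_N}$, so it suffices to prove $\alpha_N(t)\to0$ uniformly for $(t,\theta,w)$ in bounded sets. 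At $t=0$ we have $\alpha_N(0)=0$, since $\Psi_N(0)=(u_0^\theta)^{\otimes N}$.

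Next compute $\frac{\d}{\d t}\alpha_N$. The key structural point is that the one-body operator $A_\theta$ appears identically in both the $N$-body and the Hartree dynamics, so it cancels from the commutator. Only the interaction remains, giving
$$\dot\alpha_N = \mathrm{i}\,\big\langle\Psi_N,\big[\tfrac{N-1}{2N}w(x_1-x_2) - w*|u|^2(x_1),\, q_1\big]\Psi_N\big\rangle$$
(up to symmetrization). This is where the singular noise disappears: no regularity of $A_\theta$ beyond self-adjointness is used, and $\theta$ enters only through $u$. Insert $1=(p_1+q_1)(p_2+q_2)$ and estimate the three standard terms:
\begin{itemize}[label={}]
\item the $p_1p_2\cdots p_1q_2$ term, controlled by $\|w\|$ times $\alpha_N+1/N$;
\item the $p_1p_2\cdots q_1q_2$ term, handled with the weighted operator $\widehat{n}$ and bounded by $\|w\|\,\|u\|_\infty$-type quantities times $\alpha_N+1/N$;
\item the $p_1q_2\cdots q_1q_2$ term, controlled by $\alpha_N$.
\end{itemize}
For $w\in\mathfrak W$, presumably of the form $L^p+L^\infty$, each term needs bounds such as $\|w^2*|u|^2\|_\infty \lesssim \|w\|^2\|u\|_{L^q}^2$. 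Gronwall's lemma then yields
$$\alpha_N(t)\leqslant \frac{C}{N}\exp\Big(C\int_0^t K(s)\,\d s\Big),$$
where $K(s)$ depends on $\|w\|_{\mathfrak W}$ and a norm of $u(s)$ such as $L^\infty$ or $L^q$.

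The main obstacle is bounding $K$ uniformly in $\theta$ in bounded subsets of $\Theta$ and $t$ in bounded intervals; this is the dimension-dependent part. I would first prove energy estimates for \cref{Eq:AndersonHartree}. The mass $\|u\|_{L^2}$ and the energy $\frac12\langle -A_\theta u,u\rangle - \frac14\int w*|u|^2|u|^2$ are conserved. Using the coercivity of the bilinear form \cref{Def:AndersonBilinearForm}, which after the gauge transform $u=\e^{Y}v$ is comparable to the $\W^{1,2}$ norm of $v$ with constants depending continuously on $\theta$, this gives a uniform bound on $\|\e^{-Y}u(t)\|_{\W^{1,2}}$. Sobolev embedding in $d\leqslant 2$, or the formal assumptions in $d=3$, then gives the $L^q$ control of $u$ that the Gronwall step needs, with $\e^{\pm Y}$ bounded because $Y$ ranges over a bounded set. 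Global existence of the maximal solution follows from the same bound.

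Finally, the uniform convergence gives the commutation of limits. For sequences $(\theta_N,w_N)\to(\theta,w)$, write
$$\Gamma_{k,N,\theta_N,w_N} - P_{\theta,w}^{\otimes k} = \big(\Gamma_{k,N,\theta_N,w_N}-P_{\theta_N,w_N}^{\otimes k}\big) + \big(P_{\theta_N,w_N}^{\otimes k}-P_{\theta,w}^{\otimes k}\big),$$
where $P_{\theta,w}=|u^{\theta,w}\rangle\langle u^{\theta,w}|$. The first difference vanishes by the uniform bound. The second vanishes provided $u^{\theta,w}(t)$ depends continuously on $(\theta,w)$ in $L^2$. I would prove that continuity by a stability estimate on the difference of two solutions in gauged variables, again using the uniform energy bounds.
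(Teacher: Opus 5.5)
Your mean-field estimate is essentially the paper's. The paper invokes Knowles--Pickl (\cref{Prop:PicklKnowles}) rather than redoing the $p/q$ decomposition, and feeds in the uniform energy bound \cref{Eq:ControlNormD12theta}. The genuine gap is in the final claim about commuting limits.

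Your splitting handles the joint limit along $(\theta_N,w_N)$ and the iterated limit $\lim_{(\theta,w)}\lim_N$. The other order, $\lim_N\lim_{(\theta,w)}$, requires that at fixed $N$ the marginal $\Gamma_{k,N,\theta,w}(t)$ converges as $(\theta,w)\to(\theta_0,w_0)$. In other words, the many-body dynamics with regularized noise $\xi_\eps$ must converge to the one driven by $\xi$. This is the paper's \cref{Lem:ContinuityPsiNtheta}, and nothing in your argument provides it. Even granting continuity of $u^{\theta,w}$, uniform closeness to $P^{\otimes k}_{\theta,w}$ only gives $\limsup_{(\theta,w)\to(\theta_0,w_0)}\Tr|\Gamma_{k,N,\theta,w}(t)-P^{\otimes k}_{\theta_0,w_0}|\lesssim N^{-1/2}$, not existence of the inner limit.

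Moreover, the continuity of $(\theta,w)\mapsto u^{\theta,w}$ that you rely on does not follow from a direct difference estimate. The form domains $\e^{Y}\W^{1,2}$ depend on $\theta$, and for $d\geqslant 2$ they differ in general: $\e^{Y_1-Y_2}$ has only regularity $2-\frac{d}{2}-$ and is not a multiplier on $\W^{1,2}$. In $\frac{\d}{\d t}|u^{\theta_1}-u^{\theta_2}|_{\L^2}^2$, the cross term pairs $A_{\theta_1}u^{\theta_1}\in\Q(A_{\theta_1})'$ with $u^{\theta_2}\notin\Q(A_{\theta_1})$. In gauged variables the same obstruction reappears through the different weights $\rho_{\theta_j}^2$ in front of $\p_t v_j$.

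The paper needs no stability estimate. At fixed $N$ it bounds $(\rho^{-1})^{\otimes N}\Psi_{N,\theta,w}$ in $\L^\infty_t\W^{1,2}$ uniformly on bounded sets via \cref{Eq:GenManyBodyQuasiCoercivEstimate}. It then extracts a limit by Aubin--Lions--Simon, identifies it in the weak formulation, and concludes by uniqueness. Moore--Osgood then gives the commutation, and continuity of $(\theta,w)\mapsto|u^{\theta,w}\rangle\langle u^{\theta,w}|$ comes out as a by-product rather than an input. You could prove that continuity for the Hartree flow by the same compactness--uniqueness argument, but the fixed-$N$ continuity would still be missing.

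Two smaller points. First, the energy conservation you assert is not automatic. For a merely $\Q(A_\theta)$-valued solution with singular $w$, $\frac{\d}{\d t}\langle -A_\theta u,u\rangle$ pairs two elements of $\Q(A_\theta)'$, so conservation must be obtained by approximation. The paper builds the solution and its energy bound together by compactness (\cref{Cor:GWPQ(A)}). It uses bounded truncations $w^M$ with $[(w^M)^2]\leqslant[w^2]$ and data in $\mathrm{D}(A)$, where conservation is checked directly. It then passes to the limit via Aubin--Lions--Simon, uniqueness among $\L^2\cap\L^4$-valued solutions, and weak lower semicontinuity. Your remark that global existence ``follows from the same bound'' presupposes a local existence theory the proposal does not supply; in the paper, existence itself comes out of this compactness argument.

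Second, $\mathfrak W$ is defined by the condition $[w^2]<+\infty$, not as an $\L^p+\L^\infty$ class. The only quantity the Gronwall step needs is $|w^2*|u|^2|_{\L^\infty}\leqslant[w^2]\,|u|_{\mathcal{D}^{1,2}}^2$, which holds by definition. Routing through $\L^q$ norms of $u$ misses kernels such as the Coulomb potential in $d=3$, which needs Hardy's inequality. An $\L^\infty$ bound on $u$ is not available from $\mathcal{D}^{1,2}$ when $d\geqslant 2$.
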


This work is twofold. On the one hand, we want to derive Anderson-Hartree equations as effective nonlinear equations from the linear interacting many-body Schrödinger equation. If we put aside the Anderson part of the problem, we end up with the so-called quantum mean-field limit problem. For the past decades, there has been an extensive literature both in mathematics and physics to derive effective equations from quantum many-body problems. 

From a physical point of view, going from the many-body equation to a nonlinear Schrödinger type equation is quite natural in the context of Bose-Einstein condensation. In fact, when bosons condense, a macroscopic proportion of bosons occupy the same quantum state, deriving an effective equation then allows one to study the dynamics of this common quantum state. In the case of dilute gases, the limit equation is the so-called Gross-Pitaevskii equation, i.e. a cubic nonlinear equation. In this paper, we are interested in the easier problem to derive Hartree-type nonlinearities. This is sometimes called the "mean-field regime" or Hartree regime. See for example \cite{RougerieRewiewNBody} for a discussion of the different regimes.

From a mathematical point of view, the rigorous derivation of effective equations can be a tough problem. It leads over the last decades to the development of many different methods, each with their own strengths and drawbacks. Let us mention the BBGKY hierarchy method \cite{spohn1980kinetic,bardos2000weak,erdos2001derivation}, the coherent state method \cite{Hepp1974,RodnianskiQuantumFluctuations} and Pickl's method \cite{PicklKnowles,PicklSimple}. See also \cite{GolseReviex,BenedikterReviewMeanField,RougerieRewiewNBody} and references therein. 

In the stochastic setting, one can cite \cite{kolokoltsov2021law,kolokoltsov2022quantum,deBouardGuoHerouard2025} for quantum mean-field limits with additive noise and \cite{Zachhuber3dAndersonHartreeMeanField} for the mean-field derivation of Anderson-Hartree equations on $\T^3$. Up to the author's knowledge, \cite{Zachhuber3dAndersonHartreeMeanField} was the first paper to address the problem of quantum mean-field limits in the context of continuous Anderson operators. The authors use the BBGKY hierarchy and thus only prove mean-field limit to the Anderson-Hartree equation for bounded interactions as they cannot prove a useful uniqueness result for the solution of the limit hierarchy.\\

On the other hand, we aim to solve Anderson-Hartree equations of the form \cref{Intro:AndersonHartreeEq} and prove uniform bounds in some adapted functional spaces. In dimension 1, one can define directly the operator $H+\xi$ from its quadratic form. However, in dimension 2, a renormalization procedure is needed to construct the operator $H+\xi$, see \cite{Mackowiak_2025,MackowiakStrichartzConfAnderson} for the case $H=\Delta-|x|^2$ and see also \cite{allez2015continuous,Labb__2019,mouzard2021weyl,bailleul2022analysis,mouzard2023simple} for $H=\Delta$ on compact surfaces and \cite{UEKI2025104642,hsu2024construction} for $H=\Delta$ on $\R^2$. As already stressed out, the 3d part of this article relies on a formal construction of the Anderson-Hermite operator, i.e. we do not construct the stochastic objects involved in the renormalization process.

The case of the cubic Anderson-Gross-Pitaevskii equation, i.e. \cref{Intro:AndersonHartreeEq} with one-particle hamiltonian $H=\Delta-|x|^2$ and interaction potential $w=\lambda\delta_0$, has been treated in dimension 2 in \cite{Mackowiak_2025} by a compactness method for well-prepared regular initial data and in \cite{MackowiakStrichartzConfAnderson} for low regularity deterministic initial data, using Strichartz estimates. Let us also mention \cite{debussche_weber_T2,tzvetkov2020dimensional,Tzvetkov_2023} which deal with polynomial Anderson-NLS equation on $\T^2$ and \cite{debussche2017solution,debussche2023global} which deal with polynomial Anderson-NLS equation on $\R^2$.

In the case of Hartree non-linearities with interaction kernel $w\in\L^2+\L^\infty$, Strichartz estimates are not necessary in order to obtain finite energy solutions in dimension 1. In dimension 2 and 3, we introduce technical assumption on $w$, leading to the space $\mathfrak{W}$ of admissible interaction kernels given by \cref{Eq:DefW}. Let us point out that the technical assumption we made on $w$ is essentially the one needed to apply the result of \cite{PicklKnowles}. Global wellposedness and conservation laws for Hartree equation with quite general one particle hamiltonian and interaction kernel in $\mathfrak{W}$ are given in \cref{Cor:GWPQ(A)}. In particular, \cref{Cor:GWPQ(A)} provides an energy estimate sufficient to apply the result of \cite{PicklKnowles}.

Let us point out that even if we keep our method as simple as possible, i.e. without using any advanced dispersive tool, we are able to treat the physically relevant Coulomb potential $w(x)\propto |x|^{-1}$ in dimension 3.

\section{An abstract setting}

Let $A$ be a self-adjoint operator on $\L^2(\R^d)$ and $w\in\L^2+\L^\infty$. Denote by $\Q(A)$ the form domain of $A$ and by $\mathrm{D}(A)$ its domain. Define the many-body hamiltonian 
\begin{equation}\label{Eq:ManyBodyHamiltonian}
    H_{N} =\sum_{j=1}^N A_{x_j} + \frac{1}{N}\sum_{1\leqslant i<j\leqslant N} w(x_j-x_i).
\end{equation}
and the associated many-body Schrödinger equation 
\begin{equation}\label{Eq:ManyBodySchrodinger}
    \begin{cases}
    \complexI\p_t\Psi_N+H_{N}\Psi_N=0,\\
    \Psi_N(0)=\Psi_N^0.
    \end{cases}
\end{equation}
Formally, the mean-field limit is given by the Hartree equation
\begin{equation}\label{Eq:Hartree}
    \begin{cases}
    \complexI\p_t u + A u + w*|u|^2 u = 0,\\
    u(0)=u_0.
    \end{cases}
\end{equation}
This equation has formally a conserved energy given by
\begin{equation}\label{Eq:DefEnergyHartree}
    \mathcal{E}(u) = \frac{1}{2}\langle -A u,u\rangle - \frac{1}{4}\int|u|^2w*|u|^2\d x.
\end{equation}

For $w\in\mathcal{M}+\L^\infty$, let
\begin{equation}\label{Eq:Def[w]}
    [w] = \sup_{\Atop{u\in\Q(A)}{u\neq 0}}\frac{\left|w*|u|^2\right|_{\L^\infty}}{|u|_{\Q(A)}^2}\\
    = \sup_{\Atop{u_1,u_2\in\Q(A)}{u_1,u_2
    \neq 0}}\frac{\left|w*(u_1u_2)\right|_{\L^\infty}}{|u_1|_{\Q(A)}|u_2|_{\Q(A)}}.
\end{equation}
For $w\in\L^2+\L^\infty$, it is easy to see
\begin{equation}\label{Eq:Control[w]QL2}
    \forall u\in\Q(A),\; \left|w*|u|^2\right|_{\L^\infty} \leqslant \left[w^2\right]^{\frac{1}{2}}|u|_{\Q(A)}|u|_{\L^2},
\end{equation}
thus there exists $c=c(A)>0$ such that $[w]\leqslant c\left[w^2\right]^{\frac{1}{2}}$.\\

In what follows, we restrict ourselves to interaction kernels belonging to
\begin{equation}\label{Eq:DefW}
    \mathfrak{W}=\left\{w\in\L^2(\R^d,\R)+\L^\infty(\R^d,\R),\; w(x)=w(-x)\text{ a.e.},\; \left[w^2\right]<+\infty,\; \limsup_{\infty}|w|<+\infty\right\}.
\end{equation}
Note that $\mathfrak{W}$ is in fact a vector space as 
$$\forall \lambda\in\R,\; \left[(\lambda w)^2\right]=\lambda^2\left[w^2\right]\text{ and }\left[(w_1+w_2)^2\right]\leqslant 2\left(\left[w_1^2\right]+\left[w_2^2\right]\right).$$
Let us point out the symmetry assumption will not be used in the global wellposedness result but will be needed later on, to deal with mean-field limits. The condition $\left[w^2\right]<+\infty$ is exactly the one we need in order to apply the general mean-field convergence result of \cite{PicklKnowles}. Moreover, $\mathfrak{W}$ is a Hausdorff quasi-Banach space when endowed with the quasi-norm
$$|w|_{\mathfrak{W}} = |w|_{\L^2+\L^\infty}+\left[w^2\right]^{\frac{1}{2}}.$$

For $w\in\mathfrak{W}$, let $(w_2,w_\infty)\in\L^2\times\L^\infty$ be a decompositon of $w$ with disjoint supports and define for $M\in\N$, $w^M=w_\infty+w_2\indic{|w_2|\leqslant M}+M\indic{w_2>M}-M\indic{w_2<-M}$. Then, $w^M\in\L^\infty\subset\mathfrak{W}$, converges to $w$ in $\L^2+\L^\infty$ and verifies 
\begin{equation}\label{Eq:Control[wM2]}
    \left[\left(w^M\right)^2\right]\leqslant \left[w^2\right]
\end{equation}
Let us point out that such a decomposition of $w$, with disjoint supports, is possible as $w$ is bounded at infinity but may fail in general. Even if the condition that $w$ is bounded at infinity appears as a technical one, it may be natural from a physical view point to have an interaction kernel which decays to $0$ at infinity.\\

We now recall the result from \cite{PicklKnowles} we will use in the following.

\begin{prop}[Theorem 3.1. and Remark 3.8. in \cite{PicklKnowles}]\label[prop]{Prop:PicklKnowles}
    
    Let $-A$ be a lower bounded self-adjoint operator on $\L^2$ with form domain $\mathcal{Q}_1$. Let $\mathcal{Q}_N$ be the form domain of $A_{x_1}+\cdots+A_{x_N}$ on $(\L^2)^{\otimes N}$. 
    Let $w\in\mathfrak{W}$. Assume
    $$-H_{N} =-\sum_{j=1}^N A_{x_j} - \frac{1}{N}\sum_{1\leqslant i<j\leqslant N} w(x_j-x_i)$$
    is a lower bounded self-adjoint operator on $(\L^2)^{\otimes N}$ with form domain continuously embedded in $\mathcal{Q}_N$. Let $\Psi_{N,0}\in\mathcal{Q}_N$ such that $|\Psi_{N,0}|_{\L^2}=1$ and $u_0\in\mathcal{Q}_1$ such that $|u_{0}|_{\L^2}=1$. Let $\Psi_N(t)=\e^{\complexI t H_N}\Psi_{N,0}$ and assume there exists a unique maximal solution $(I,u)$ of \cref{Eq:Hartree} in $\mathcal{C}(I,\mathcal{Q}_1)\cap\mathcal{C}^1(I,\mathcal{Q}'_1)$ starting from $u_0$. Let $q(t) = 1-|u(t,x_1)\rangle\langle u(t,x_1)|$ and $\alpha_N(t) = \left|q(t)\Psi_N(t)\right|_{\L^2}^2$. Then, for any $t\in I$, it holds
    $$\alpha_N(t)\leqslant \exp\left(32\left[w^2\right]\int_0^t|u(\tau)|_{\mathcal{Q}_1}^2\d\tau\right)\left(\alpha_N(0)+\frac{1}{N}\right).$$

\end{prop}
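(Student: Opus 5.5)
This is Knowles–Pickl's Theorem 3.1, restated in the abstract setting, so the plan is to reproduce their Grönwall argument for the counting functional $\alpha_N(t)=\langle\Psi_N(t),q_1(t)\Psi_N(t)\rangle$. Here $q_j(t)=1-p_j(t)$ and $p_j(t)=|u(t,x_j)\rangle\langle u(t,x_j)|$. By bosonic symmetry of $\Psi_N$ (which is preserved by the flow, since $H_N$ is symmetric and $w$ is even), we have $\alpha_N=\frac1N\sum_j\langle\Psi_N,q_j\Psi_N\rangle$. First I would justify differentiating in time. Use the hypotheses: $\Psi_N\in\mathcal C(\R,\mathcal Q_N)$ because the form domain of $-H_N$ embeds in $\mathcal Q_N$, and $u\in\mathcal C(I,\mathcal Q_1)\cap\mathcal C^1(I,\mathcal Q_1')$. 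The derivative of $p(t)$ is then computed in the weak sense through the duality $\mathcal Q_1\times\mathcal Q_1'$. If needed, I would first treat $w^M\in\L^\infty$ and then pass to the limit using \cref{Eq:Control[wM2]}.

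The free parts cancel, and only the mismatch between the interaction and the mean field survives:
$$\p_t\alpha_N=\im\Big\langle\Psi_N,\Big[\tfrac{N-1}{N}w(x_1-x_2)-\big(w*|u|^2\big)(x_1)-\big(w*|u|^2\big)(x_2),\,q_1\Big]\Psi_N\Big\rangle$$
up to constants. This uses symmetry to rewrite $\frac1N\sum_{i<j}$ as $\frac{N-1}{2}w_{12}$ on symmetric states. Next I would insert $1=(p_1+q_1)(p_2+q_2)$ on both sides and sort the terms by the number of $q$'s. Three classes of terms arise.
\begin{itemize}
\item \emph{$p p\to p q$ terms.} Here $p_2 w_{12}p_2=p_2(w*|u|^2)(x_1)$, so these cancel against the mean field up to an $O(1/N)$ error.
\item \emph{$p p\to q q$ terms.} These are handled by the weighted operators $\widehat n=\sum_k\sqrt{k/N}\,P_k$ and the shift trick. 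They are bounded by $\|w_{12}p_2\|$-type quantities plus $\frac1N$.
\item \emph{$pq\to qq$ terms.} These are controlled by Cauchy–Schwarz.
\end{itemize}

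The key estimate is $\|w(x_1-x_2)p_2\|_{op}^2=\sup_{x_1}\big(w^2*|u|^2\big)(x_1)\le[w^2]\,|u|_{\mathcal Q_1}^2$, by definition \cref{Eq:Def[w]} applied to $w^2$. Every term can be bounded by products of $\|w_{12}p_2\|$ with $\alpha_N$ and $\frac1N$. This yields the differential inequality
$$\p_t\alpha_N\le C[w^2]\,|u(t)|_{\mathcal Q_1}^2\Big(\alpha_N+\frac1N\Big),$$
with $C=32$ after the careful bookkeeping of Knowles–Pickl. Grönwall's lemma applied to $\alpha_N+\frac1N$ then gives the stated bound.

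I expect the main obstacle to be the $qq$ term $\langle\Psi_N,q_1q_2w_{12}p_1p_2\Psi_N\rangle$. A naive bound there loses a factor $N$. The fix is to write $q_1q_2\Psi_N$ using $\widehat n^{-1/2}$ shifted by one, and to use the symmetry of $\Psi_N$ to distribute $w_{12}$ over all pairs. This reduces the term to $\|w_{12}p_2\|(\alpha_N+N^{-1})$, and it is exactly the step where $[w^2]$ (rather than $[w]$) is required. A secondary technical point is the rigor of the derivative in the form setting, which I would handle by the truncation $w^M$ and a regularization of $\Psi_{N,0}$ and $u_0$, then removing the regularizations by continuity.
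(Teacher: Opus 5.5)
Your route is the Knowles--Pickl argument that the paper invokes by citation: the counting functional, the commutator formula, the $p/q$ decomposition, and shifted weights $\widehat n$ for the $p_1p_2\to q_1q_2$ term. However, \textbf{the final differential inequality does not follow from your own estimates.}

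You correctly obtain $\|w_{12}p_2\|^2=\|w^2*|u|^2\|_{\L^\infty}\leqslant[w^2]\,|u|_{\mathcal{Q}_1}^2$. You then bound every term of $\partial_t\alpha_N$ by $\|w_{12}p_2\|$ times $\alpha_N$ or $\frac1N$. This includes the mean-field terms, since $\|w*|u|^2\|_{\L^\infty}\leqslant\|w^2*|u|^2\|_{\L^\infty}^{1/2}$ when $|u|_{\L^2}=1$. What this actually yields is
\[
\partial_t\alpha_N\leqslant 32\,\|w_{12}p_2\|\Big(\alpha_N+\frac1N\Big)\leqslant 32\,[w^2]^{\frac12}\,|u(t)|_{\mathcal{Q}_1}\Big(\alpha_N+\frac1N\Big),
\]
that is, the exponent $32[w^2]^{1/2}\int_0^t|u|_{\mathcal{Q}_1}$, not $32[w^2]\int_0^t|u|_{\mathcal{Q}_1}^2$. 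This is also the shape of the exponent in Knowles--Pickl's Theorem 3.1, namely $32\int_0^t\|w^2*|u(s)|^2\|_{\L^\infty}^{1/2}\,\d s$.

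The two exponents agree only if $[w^2]\,|u(\tau)|_{\mathcal{Q}_1}^2\geqslant1$, and nothing guarantees this. $[w^2]$ is defined as a supremum, so you cannot enlarge it above $1$, and $|u|_{\mathcal{Q}_1}$ need not dominate $|u|_{\L^2}$. No sharper bookkeeping closes the gap. For fixed $(\Psi_N(t),u(t))$,
\[
\partial_t\alpha_N=-\complexI\big\langle\Psi_N,\big[\tfrac1N\textstyle\sum_{j\geqslant2}w(x_1-x_j)-(w*|u|^2)(x_1),q_1\big]\Psi_N\big\rangle
\]
is linear under $w\mapsto\lambda w$, whereas $[(\lambda w)^2]=\lambda^2[w^2]$. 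So termwise bounds cannot produce a rate quadratic in $\lambda$ as $\lambda\to0$. You have two options:
\begin{itemize}
\item prove and state the square-root version, or
\item reach a bound of the displayed shape via $x\leqslant\frac12(1+x^2)$, at the cost of an extra factor $\e^{16|t|}$.
\end{itemize}
Either suffices for the paper, which only needs uniformity over bounded sets of $(t,\theta,w)$.

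There are also three smaller points.
\begin{itemize}
\item \textbf{Symmetry of $\Psi_{N,0}$.} You use bosonic symmetry essentially: to replace $\frac1N\sum_{j\geqslant2}w_{1j}$ by $\frac{N-1}{N}w_{12}$, and in the $\widehat n$ step. It is not among the displayed hypotheses and must be added, as Knowles--Pickl do by working on the symmetric subspace. Without it the claim fails. Take $\Psi_{N,0}=u_0\otimes\chi^{\otimes(N-1)}$ with $\chi\perp u_0$: then $\alpha_N(0)=0$, but particle $1$ feels $w*|\chi|^2$ rather than $w*|u|^2$, so generically $\alpha_N(t)\not\to0$. In the paper's application $\Psi_{N,0}=(u_0^\theta)^{\otimes N}$, so this is harmless there.
\item \textbf{The missing term.} Your list omits the $p_1q_2\to q_1p_2$ term. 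It contributes nothing, because $\langle\Psi_N,p_1q_2w_{12}q_1p_2\Psi_N\rangle$ is real by exchange symmetry; alternatively, bound it by $\|w_{12}p_2\|\,\alpha_N$.
\item \textbf{Truncation.} Replacing $w$ by $w^M$ would require stability of the Hartree flow as $w^M\to w$, which is not among the abstract hypotheses. The hypotheses $u\in\mathcal{C}(I,\mathcal{Q}_1)\cap\mathcal{C}^1(I,\mathcal{Q}_1')$ and the embedding of the form domain of $-H_N$ into $\mathcal{Q}_N$ are there so that $\alpha_N$ can be differentiated directly, as Knowles--Pickl do, with no regularization.
\end{itemize}
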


\subsection{Some general results on Hartree-type equations}

We first give some general abstract results on Hartree equations we will use later on. In this subsection, $A$ can be any unbounded self-adjoint operator on $\L^2$. In this context, we study some a priori properties of solution of \cref{Eq:Hartree}.

\begin{lemme}\label[lemme]{Lem:MassConservation}

    Let $r_1,r_2\in[1,+\infty]$, $w\in\L^{r_1}+\L^{r_2}$, $I\subset\R$ an interval and $u$ a solution of \cref{Eq:Hartree} in $\L^\infty_{loc}(I,\L^2)\cap\L^3_{loc}\left(I,\L^{\frac{6r_1}{3r_1-2}}\cap\L^{\frac{6r_2}{3r_2-2}}\right)$. Then, the $\L^2$ norm of $u$ is constant on $I$.
    
\end{lemme}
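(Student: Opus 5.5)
We will test the equation against $u$ itself and use that the linear part is self-adjoint while the nonlinear part is real. Formally,
$$\frac{\d}{\d t}|u|_{\L^2}^2 = 2\re\langle \p_t u,u\rangle = 2\re\,\complexI\langle Au + w*|u|^2u,u\rangle = -2\im\langle Au,u\rangle - 2\im\int w*|u|^2\,|u|^2\d x = 0,$$
since $\langle Au,u\rangle$ is real ($A$ self-adjoint) and $w*|u|^2$ is real. The work is to justify this at the low regularity assumed, where $u$ need not lie in $\Q(A)$. We will therefore avoid $\langle Au,u\rangle$ altogether and work with the Duhamel formulation. Since $A$ is self-adjoint, $\e^{\complexI tA}$ is a unitary group. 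A solution in the stated class will mean a distributional (equivalently, mild) solution
$$u(t) = \e^{\complexI tA}u(0) + \complexI\int_0^t \e^{\complexI(t-s)A}\big(w*|u|^2u\big)(s)\,\d s.$$

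First we check that the nonlinearity makes sense in a dual space. Split $w=w_1+w_2$ with $w_j\in\L^{r_j}$ and set $p_j=\frac{6r_j}{3r_j-2}$, so that $\frac{3}{p_j}=\frac{3}{2}-\frac{1}{r_j}$, i.e. $\frac{1}{r_j}+\frac{3}{p_j}=1+\frac{1}{2}$. Hölder and Young then give
$$\Big|\int (w_j*|u|^2)\,u\,\bar v\Big| \leqslant |w_j|_{\L^{r_j}}\,|u|_{\L^{p_j}}^3\,|v|_{\L^{p_j}},$$
using the four-function form of Young: $\int (w*f)g \leqslant |w|_{r}|f|_{p/2}|g|_{p/2}$ with $\frac{1}{r}+\frac{4}{p}=2$. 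Let us check the exponents with $f=|u|^2$ and $g=u\bar v$: we need $\frac1{r_j}+\frac{2}{p_j}+\frac{2}{p_j}=2$, i.e. $\frac{1}{r_j}+\frac{4}{p_j}=2$. But $\frac4{p_j}=\frac{2(3r_j-2)}{3r_j}=2-\frac{4}{3r_j}$, which is off. So we redo the Hölder step with the correct exponent: put $f=|u|^2\in\L^{p_j/2}$ and $g=u\bar v$, with $v$ measured in $\L^q$. We then need $\frac{1}{r_j}+\frac{2}{p_j}+\frac{1}{p_j}+\frac1q=2$. Since $\frac{3}{p_j}=\frac32-\frac1{r_j}$, this forces $\frac1q=\frac12$, so $q=2$. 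Hence
$$N(u):=w*|u|^2u\in \L^2+\L^2=\L^2,\qquad |N(u)|_{\L^2}\lesssim \sum_j |w_j|_{\L^{r_j}}|u|_{\L^{p_j}}^3.$$
By the time-integrability assumption, $N(u)\in\L^1_{loc}(I,\L^2)$. This is exactly why the exponents $\L^3_{loc}$ and $p_j$ appear.

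Next we compute the mass from Duhamel. Set $v(t)=\e^{-\complexI tA}u(t)$, so that $v(t)=u(0)+\complexI\int_0^t\e^{-\complexI sA}N(u)(s)\,\d s$. Then $v\in W^{1,1}_{loc}(I,\L^2)$ with $\p_tv=\complexI\e^{-\complexI tA}N(u)$. Because $|u(t)|_{\L^2}=|v(t)|_{\L^2}$, the map $t\mapsto|v(t)|^2_{\L^2}$ is absolutely continuous, and
$$\frac{\d}{\d t}|v|^2_{\L^2}=2\re\langle \complexI\e^{-\complexI tA}N(u),v\rangle = 2\re\,\complexI\langle N(u),u\rangle = -2\im\int (w*|u|^2)|u|^2\d x=0.$$
The last integral is finite by the same Hölder–Young bound and is real because $w$ is real-valued. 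So the mass is constant on $I$.

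The main obstacle is the meaning of solution: if the paper intends distributional solutions rather than mild ones, one must first show the two notions coincide. We would do this by noting that $N(u)\in\L^1_{loc}(I,\L^2)$, so $\p_t(\e^{-\complexI tA}u)=\complexI\e^{-\complexI tA}N(u)$ holds in the sense of distributions valued in $\mathrm{D}(A)'$. This in turn follows by pairing with $\e^{\complexI tA}\varphi$ for $\varphi\in\mathrm{D}(A)$, where the formally appearing term $\langle u,A\varphi\rangle$ cancels.
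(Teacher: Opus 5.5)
Your proof is correct, but it removes the linear term differently from the paper. The paper never uses the Duhamel formula. It truncates with the spectral projections $P_N=\indic{|A|\leqslant N}$, so that $P_Nu$ solves an $\L^2$-valued equation whose linear part $AP_N$ is bounded, and $(\complexI AP_Nu,P_Nu)_{\L^2}$ vanishes exactly by self-adjointness. After integrating in time, it lets $N\to+\infty$ by dominated convergence. The dominating function is the same Hölder--Young bound you derive: $|(\complexI w*|u|^2u,P_Nu)_{\L^2}|\leqslant\sum_j|w_j|_{\L^{r_j}}|u|_{\L^{p_j}}^3|u|_{\L^2}$ with $p_j=\frac{6r_j}{3r_j-2}$. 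You instead conjugate by the unitary group, $v=\e^{-\complexI tA}u$, so the linear part never appears, the cancellation comes from unitarity, and no limit is needed. The price is that you must know $u$ is a mild solution. That is exactly the weak-to-mild step you sketch at the end: test against $\e^{\complexI tA}\varphi$ with $\varphi\in\mathrm{D}(A)$, and the two $A$-terms cancel. This step is standard and your sketch of it is correct, so your argument is complete. The paper's truncation has the small advantage of using the equation only against test functions in the range of $P_N$, so it works directly from the weak formulation. Both proofs need $\int (w*|u|^2)|u|^2\,\d x$ to be real, i.e. $w$ real-valued; the statement leaves this tacit and you rightly make it explicit. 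In a write-up, drop your first (incorrect) Young bound involving $|v|_{\L^{p_j}}$ and keep only the computation forcing $q=2$. If $0\notin I$, base the Duhamel formula at an arbitrary $t_0\in I$.
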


\begin{proof}
    
    Write $w=w_{1}+w_2$ with $w_j\in\L^{r_j}$. Let $P_N = \indic{|A|\leqslant N}$, then for any $f\in\L^2$, $P_N f\in\mathrm{D}(A)$ and converges strongly to $f$ in $\L^2$, and $|P_N|_{\mathcal{L}(\L^2)}=1$. Moreover, it holds
    $$\frac{1}{2}\frac{\d}{\d t}|P_Nu|_{\L^2}^2 = (iA P_N u,P_N u)_{\L^2} + (iw*|u|^2u,P_N u)_{\L^2},$$
    where we use the real Hilbert structure on $\L^2$, given by
    $$(f,g)_{\L^2}=\re\int f\overline{g}\d x.$$
    As $A$ is self-adjoint, it holds for any $s,t\in I$,
    $$|P_Nu(t)|_{\L^2}^2=|P_Nu(s)|_{\L^2}^2+2\int_{s}^{t}(iw*|u|^2u(\tau),P_N u(\tau))_{\L^2}\d\tau.$$
    Now, by Hölder's inequality and Young's inequality for convolution,
    $$\left(iw*|u|^2u(\tau),P_N u(\tau)\right)_{\L^2}\leqslant \left(|w_1|_{\L^{r_1}}|u(\tau)|_{\L^{\frac{6r_1}{3r_1-2}}}^3+|w_2|_{\L^{r_2}}|u(\tau)|_{\L^{\frac{6r_2}{3r_2-2}}}^3\right)|u(\tau)|_{\L^2}$$
    which is locally integrable by hypothesis. We conclude by dominated convergence.

\end{proof}

\begin{lemme}\label[lemme]{Lem:AbstractHartreeUniqueness}

    Let $r_1,r_2\in[1,+\infty]$, $w\in\L^{r_1}+\L^{r_2}$, $I\subset\R$ an interval containing $0$ and $u_0\in\L^2$. There is at most one solution of \cref{Eq:Hartree} in $\L^\infty_{loc}(I,\L^2)\cap\L^2_{loc}\left(I,\L^{\frac{2r_1}{r_1-1}}\cap\L^{\frac{2r_2}{r_2-1}}\right)$ starting from $u_0$.
    
\end{lemme}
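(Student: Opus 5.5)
Take two solutions $u,v$ in the stated class with $u(0)=v(0)=u_0$ and set $r=u-v$. Since $A$ is self-adjoint it generates a unitary group, so we work with the Duhamel formulation rather than differentiating $\|r\|_{\L^2}^2$ directly, because $u,v$ need not lie in $\Q(A)$. Both $u$ and $v$ satisfy
$$u(t)=\e^{\complexI tA}u_0+\complexI\int_0^t\e^{\complexI(t-\tau)A}\,(w*|u|^2u)(\tau)\,\d\tau .$$
Alternatively, one can reuse the regularization $P_N=\indic{|A|\leqslant N}$ from \cref{Lem:MassConservation}: compute $\frac{\d}{\d t}|P_N r|_{\L^2}^2$, discard the self-adjoint part, integrate in time, and let $N\to\infty$ by dominated convergence. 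Either route yields
$$|r(t)|_{\L^2}^2\leqslant 2\int_0^t\left|\left(\complexI\left(w*|u|^2u-w*|v|^2v\right),r\right)_{\L^2}\right|\d\tau .$$
The first step is to justify that the nonlinear terms are in $\L^1_{loc}(I,\L^2)$, or at least pair integrably with $r$, using the bounds below.

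Next, split the difference of nonlinearities:
$$w*|u|^2u-w*|v|^2v=(w*|u|^2)\,r+\bigl(w*(|u|^2-|v|^2)\bigr)v .$$

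For the first term, note that $w$ is real, so $(w*|u|^2)$ is real and $(\complexI (w*|u|^2) r,r)_{\L^2}=\re\,\complexI\int (w*|u|^2)|r|^2=0$. This term drops out, which is why no $\L^\infty$ bound on $u$ is needed.

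For the second term, write $|u|^2-|v|^2=\re\bigl((u+v)\overline{r}\bigr)$, possibly up to conjugation. For each piece $w_j\in\L^{r_j}$, set $p_j=\frac{2r_j}{r_j-1}$, so that $\frac{2}{p_j}+\frac{1}{r_j}=1+\frac{1}{2}\cdot\ldots$. Concretely, Hölder gives
$$\left|\int \bigl(w_j*f\bigr)\,g\right|\leqslant |w_j|_{\L^{r_j}}|f|_{\L^{s}}|g|_{\L^{s}}, \qquad 1+\frac{1}{r_j'}\ldots$$
More concretely, take $f=(u+v)\overline r$ and $g=v\overline r$. Hölder gives $|f|_{\L^{q_j}}\leqslant |u+v|_{\L^{p_j}}|r|_{\L^2}$ with $\frac1{q_j}=\frac1{p_j}+\frac12=1-\frac1{2r_j}$. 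Young's inequality requires $\frac1{r_j}+\frac2{q_j}=2$, which holds since $\frac1{r_j}+2-\frac1{r_j}=2$. Hence
$$\left|\int (w_j*f)g\right|\leqslant |w_j|_{\L^{r_j}}\,|u+v|_{\L^{p_j}}\,|v|_{\L^{p_j}}\,|r|_{\L^2}^2 .$$

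Combining these gives
$$|r(t)|_{\L^2}^2\leqslant C\int_0^t \Bigl(\sum_j |w_j|_{\L^{r_j}}\bigl(|u|_{\L^{p_j}}+|v|_{\L^{p_j}}\bigr)^2\Bigr)|r(\tau)|_{\L^2}^2\,\d\tau .$$
The weight is in $\L^1_{loc}(I)$ precisely because $u,v\in\L^2_{loc}(I,\L^{p_j})$. Gronwall's lemma then gives $r\equiv0$ for $t\geqslant0$, and the same argument applies backward in time.

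The main obstacle is the rigorous justification of the energy identity for $|r|^2_{\L^2}$ when $r$ is not in the form domain. I would handle it exactly as in \cref{Lem:MassConservation}: apply $P_N$, observe that $(\complexI AP_Nr,P_Nr)_{\L^2}=0$, and pass to the limit. The delicate point is the real-valued term $(w*|u|^2)r$. After applying $P_N$, its cancellation only holds in the limit. So I must show $(\complexI P_N[(w*|u|^2)r],P_Nr)\to(\complexI(w*|u|^2)r,r)=0$. This uses $(w*|u|^2)r\in\L^1_{loc}(I,\L^2)$ and $P_N\to1$ strongly, together with dominated convergence in time. The required integrability follows from the same Hölder/Young bounds above.
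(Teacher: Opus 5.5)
Your proposal is correct and follows essentially the same route as the paper: you justify the $\L^2$ energy identity for $r=u-v$ through the $P_N=\indic{|A|\leqslant N}$ regularization of \cref{Lem:MassConservation}, drop the real potential term $(w*|u|^2)r$, bound the remaining term by $|w_j|_{\L^{r_j}}|u+v|_{\L^{p_j}}|v|_{\L^{p_j}}|r|_{\L^2}^2$ via H\"older and Young, and conclude with Gronwall. The integrability you need to pass to the limit $N\to+\infty$ does hold, since $\bigl|w_j*|u|^2\bigr|_{\L^\infty}\leqslant|w_j|_{\L^{r_j}}|u|_{\L^{p_j}}^2$ with $p_j=\frac{2r_j}{r_j-1}$; however, you should delete the unfinished exponent fragments before your ``More concretely'' paragraph, because that paragraph already contains the correct bookkeeping.
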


\begin{proof}

    Write $w=w_{1}+w_2$ with $w_j\in\L^{r_j}$. Let $u$ and $v$ be two such solutions and $R=u-v$. Then, $R\in\L^\infty_{loc}(I,\L^2)\cap\L^2_{loc}\left(I,\L^{\frac{2r_1}{r_1-1}}\cap\L^{\frac{2r_2}{r_2-1}}\right)$ verifies
    $$\complexI\p_t R + AR +w*|u|^2R+w*\left[(|u|+|v|)(|u|-|v|)\right]v = 0.$$
    Thus, arguing as in the proof of \cref{Lem:MassConservation}, it holds
    $$\frac{1}{2}\frac{\d}{\d t}|R|_{\L^2}^2 = -\im\int |R|^2w*|u|^2 + w*\left[(|u|+|v|)(|u|-|v|)\right]v\overline{R} \d x.$$
    Then, by Hölder's inequality and Young's inequality for convolution,
    $$\frac{1}{2}\frac{\d}{\d t}|R|_{\L^2}^2 \leqslant |w|_{\L^{r_1}+\L^{r_2}}\left( \left(|u|_{\L^{\frac{2r_1}{r1-1}}}+|v|_{\L^{\frac{2r_1}{r_1-1}}} \right)|v|_{\L^{\frac{2r_1}{r_1-1}}}+\left(|u|_{\L^{\frac{2r_2}{r_2-1}}}+|v|_{\L^{\frac{2r_2}{r_2-1}}} \right)|v|_{\L^{\frac{2r_2}{r_2-1}}}\right)|R|_{\L^2}^2.$$
    As $|R(0)|_{\L^2}=0$ and $$\left(|u|_{\L^{\frac{2r_1}{r1-1}}}+|v|_{\L^{\frac{2r_1}{r_1-1}}} \right)|v|_{\L^{\frac{2r_1}{r_1-1}}}+\left(|u|_{\L^{\frac{2r_2}{r_2-1}}}+|v|_{\L^{\frac{2r_2}{r_2-1}}} \right)|v|_{\L^{\frac{2r_2}{r_2-1}}}\in\L^1_{loc}(I),$$ Gronwall's lemma implies $R=0$.

\end{proof}

\begin{rem}By interpolation and \cref{Lem:MassConservation}, any solution of \cref{Eq:Hartree} belonging to $\L^\infty_{loc}(I,\L^2)\cap\L^2_{loc}(I,\L^{\frac{2r_1}{r_1-1}}\cap\L^{\frac{2r_2}{r_2-1}})$ has constant $\L^2$ norm.
\end{rem}

\begin{rem}

    In \cref{Lem:MassConservation,Lem:AbstractHartreeUniqueness}, one can even consider the set $\mathcal{M}$ of signed radon measures instead of $\L^1$, the proofs are the same except $|w_j|_{\L^1}$ should be replaced by $|w_j|_{\mathcal{M}}$. It allows us to consider $w=\lambda\delta_0$, i.e. the pure power case.

\end{rem}

\subsection{An abstract global wellposedness result}

We now give an abstract global wellposedness result on Hartree equations we will use later on. In this subsection, $A$ is an unbounded self-adjoint operator on $\L^2$ satisfying the following assumptions.
\begin{hyp}\label[hyp]{Hyp:Q(A)}{~}

    \begin{enumerate}
        \item $-A$ is lower bounded with form domain $\Q(A)$ and there exists $\delta_A,c_A,C_A>0$ such that
        $$c_A|u|_{\Q(A)}^2\leqslant\langle -Au,u\rangle+\delta_A|u|_{\L^2}^2\leqslant C_A|u|_{\Q(A)}^2.$$
        \item $\Q(A)$ is compactly embedded in $\L^2\cap\L^4$.
    \end{enumerate}

\end{hyp}

\cref{Hyp:Q(A)} 1. deserves a comment. This condition may seem trivial, as any lower bounded operator verifies \cref{Hyp:Q(A)} 1. with $c_A=C_A=1$, if $|u|_{\Q(A)}$ is chosen to be $\langle -Au,u\rangle+\delta_A|u|_{\L^2}^2$. Yet, we formulate our hypothesis like this in order to deal with the cases where $A=A_\theta$ depends on an external parameter in such a way that one can choose $|u|_{\Q(A_\theta)}$ indepently of $\theta$. This will be the case for Anderson operators $H+\xi$, for which $\Q(A)=\rho\Q(H)$ with $\rho$ a bounded function depending continuously on $\xi$, as we will explain below.\\

Our abstract global wellposedness result relies on a compactness argument. First, we deal with the case $w\in\L^\infty$.

\begin{prop}

    Let $d\in\N^*$, $w\in\L^\infty(\R^d)$ and $A:\mathrm{D}(A)\subset\L^2(\R^d)\to\L^2(\R^d)$ be a densely defined unbounded self-adjoint operator. Let $u_0\in\L^2(\R^d)$, there exists a unique maximal $\L^2$-valued solution of \cref{Eq:Hartree} starting from $u_0$, which is global, has constant $\L^2$-norm and is continuous in $\L^\infty([-T,T],\L^2(\R^d))$ with respect to the initial data, for any $T>0$.
    
\end{prop}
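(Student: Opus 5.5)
The plan is to write \cref{Eq:Hartree} in Duhamel form with the unitary group $\e^{\complexI tA}$ and apply a fixed-point argument in $\mathcal{C}([-T,T],\L^2)$. Set
$$\Phi(u)(t)=\e^{\complexI tA}u_0+\complexI\int_0^t \e^{\complexI(t-\tau)A}\left(w*|u|^2u\right)(\tau)\d\tau .$$
Since $A$ is self-adjoint, Stone's theorem makes $\e^{\complexI tA}$ a strongly continuous unitary group on $\L^2$. The nonlinearity $F(u)=w*|u|^2u$ is locally Lipschitz on $\L^2$, because $|w*|u|^2|_{\L^\infty}\leqslant |w|_{\L^\infty}|u|_{\L^2}^2$. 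Writing
$$F(u)-F(v)=(w*|u|^2)(u-v)+\left(w*(|u|^2-|v|^2)\right)v$$
gives
$$|F(u)-F(v)|_{\L^2}\leqslant |w|_{\L^\infty}\left(|u|_{\L^2}^2+(|u|_{\L^2}+|v|_{\L^2})|v|_{\L^2}\right)|u-v|_{\L^2}.$$
A Banach fixed point on a ball of radius $2|u_0|_{\L^2}$ then yields a unique mild solution on $[-T_0,T_0]$, with $T_0\sim(|w|_{\L^\infty}|u_0|_{\L^2}^2)^{-1}$. It extends to a unique maximal solution, together with the blow-up alternative: if the existence time is finite, $|u(t)|_{\L^2}\to\infty$.

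Next comes mass conservation. I would apply \cref{Lem:MassConservation} with $r_1=r_2=\infty$; the required exponent is $\frac{6r}{3r-2}\to 2$, so the hypothesis reduces to $u\in\L^\infty_{loc}\L^2\cap\L^3_{loc}\L^2$, which holds for continuous $\L^2$-valued solutions. One checks that the mild solution is a solution in the sense used there, since the computation with $P_N=\indic{|A|\leqslant N}$ only needs the Duhamel formula applied to $P_N u$. This gives constant $\L^2$-norm, so the blow-up alternative forces global existence. Uniqueness among all $\L^2$-valued solutions follows either from the contraction argument or directly from \cref{Lem:AbstractHartreeUniqueness} with $r_1=r_2=\infty$, where the exponent $\frac{2r}{r-1}$ equals $2$.

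Finally, for continuous dependence I would fix $T>0$ and initial data $u_0$, $v_0$ with $|u_0|_{\L^2},|v_0|_{\L^2}\leqslant M$. By mass conservation both solutions stay bounded by $M$ in $\L^2$ for all times. Subtracting the Duhamel formulas and using the Lipschitz bound gives
$$|u(t)-v(t)|_{\L^2}\leqslant|u_0-v_0|_{\L^2}+3|w|_{\L^\infty}M^2\left|\int_0^t|u-v|_{\L^2}\d\tau\right|,$$
and Gronwall yields $\sup_{[-T,T]}|u-v|_{\L^2}\leqslant \e^{3|w|_{\L^\infty}M^2T}|u_0-v_0|_{\L^2}$. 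This is Lipschitz dependence on bounded sets, hence continuity into $\L^\infty([-T,T],\L^2)$.

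The only mildly delicate point is the link between mild solutions and the notion of solution in \cref{Lem:MassConservation}, which is formulated with the equation in a weak sense. I would handle it by proving mass conservation directly on the Duhamel formula: the regularized solution $P_Nu$ is $\mathcal{C}^1$ in $\L^2$, so the same computation as in that lemma applies, and I would then let $N\to\infty$.
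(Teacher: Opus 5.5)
Your proposal is correct and follows essentially the paper's route: Duhamel formula with the Stone group, a Banach fixed point on a ball of radius $2|u_0|_{\L^2}$, globalization via \cref{Lem:MassConservation} with $r_1=r_2=\infty$, and uniqueness via \cref{Lem:AbstractHartreeUniqueness}. The one difference is continuous dependence: you use a single Gronwall estimate, whereas the paper uses a step-by-step doubling induction $|u-v|_{X_{nT}}\leqslant 2^n|u_0-v_0|_{\L^2}$; both give a Lipschitz bound exponential in $T$, and your route, like your remark on mild versus weak solutions, is slightly cleaner.
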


\begin{proof}

    The case $w=0$ follows from Stone's theorem \cite{StoneOneParameterGroup}. Now, assume $w\neq 0$. A priori uniqueness is given by \cref{Lem:AbstractHartreeUniqueness}. The result follows by a standard fix point argument in $\L^\infty([-T,T],\L^2(\R^d))$ and the conservation of mass given by \cref{Lem:MassConservation}. By Stone's theorem \cite{StoneOneParameterGroup}, there exists a propagator $U$ associated to $A$. By the Duhamel formula, to find a solution of \cref{Eq:Hartree} in $\L^\infty([-T,T],\L^2(\R^d))$ it is sufficient to solve
    \begin{equation}\label{Eq:DuhamelHartree}
        u(t) = U(t)u_0 - \complexI\int_0^t U(t-s)\left[w*|u(s)|^2u(s)\right]\d s
    \end{equation}
    in the same space. Let $X_T = \L^\infty([-T,T],\L^2(\R^d))$ and for $u\in X_T$,
    $$\Phi(u)(t) = U(t)u_0 - \complexI\int_0^t U(t-s)\left[w*|u(s)|^2u(s)\right]\d s.$$
    Thus, solutions of \cref{Eq:DuhamelHartree} in $X_T$ are exactly the fixed points of $\Phi$. Let $R=2|u_0|_{\L^2}$ and $B_T = \overline{B_{X_T}(R)}$. Now remark that, by unitarity of $U$ and Hölder's and Young's inequalities, for $u,v\in B_T$,
    \begin{equation}
        |\Phi(u)(t)|_{\L^2}\leqslant |u_0|_{\L^2} + T|w|_{\L^\infty}|u|_{X_T}^3\leqslant R\left(\frac{1}{2}+T|w|_{\L^\infty}R^2\right),
    \end{equation}
        and
    \begin{equation}
        |\Phi(u)(t)-\Phi(v)(t)|_{\L^2}\leqslant T|w|_{\L^\infty}(|u|_{X_T}+|v|_{X_T})^2|u-v|_{X_T}\leqslant4R^2 T|w|_{\L^\infty}|u-v|_{X_T}.
    \end{equation}
    Thus, by Banach fix point theorem, for $T<(8|u_0|_{\L^2}^2|w|_{\L^\infty})^{-1}$, $\Phi$ has a unique fixed point in $B_T$, which is the unique solution in $X_T$. Set $T=(9|u_0|_{\L^2}^2|w|_{\L^\infty})^{-1}$, by \cref{Lem:MassConservation}, for all $t\in[-T,T]$, $|u(t)|_{\L^2}=|u_0|_{\L^2}$. Thus, one can construct inductively a sequence of (unique) solutions $u^n$ to \cref{Eq:Hartree} in $X_{nT}$, for any $n\in\N^*$. By \cref{Lem:AbstractHartreeUniqueness}, 
    $$u(t) = \sum_{n=1}^{+\infty} u^n(t)\indic{(n-1)T\leqslant |t|<nT} $$
    is the unique solution of \cref{Eq:Hartree} in $\L^\infty_{loc}(\R,\L^2(\R^d))$. Moreover, it has a constant $\L^2$-norm. Finally, let $u$ and $v$ be the unique solutions of \cref{Eq:Hartree} in $\L^\infty_{loc}(\R,\L^2(\R^d))$ starting respectively at $u_0$ and $v_0$. Let $R> \max(|u_0|_{\L^2},|v_0|_{\L^2})$. We will prove, by induction, there exists $T=T(R)>0$, such that for any $n\in\N^*$,
    $$|u-v|_{X_{nT}}\leqslant 2^n |u_0-v_0|_{\L^2},$$
    thus proving the continuity with respect to the initial data. It holds
    $$u(t)-v(t) = U(t)(u_0-v_0)-\complexI\int_0^t U(t-s)\left[w*|u(s)|^2u(s)-w*|v(s)|^2v(s)\right]\d s,$$
    thus, by conservation of mass,
    $$|u-v|_{X_T}\leqslant |u_0-v_0|_{\L^2}+T|w|_{\L^\infty}\left(|u_0|_{\L^2}+|v_0|_{\L^2}\right)^2|u-v|_{X_T}.$$
    Let $T = \left(8|w|_{\L^\infty}R^2\right)^{-1}$, then it holds
    $$|u-v|_{X_T}\leqslant 2 |u_0-v_0|_{\L^2}.$$
    Now, assume
    $$|u-v|_{X_{nT}}\leqslant 2^n |u_0-v_0|_{\L^2},$$
    then 
    \begin{align*}
        |u-v|_{X_{(n+1)T}} &\leqslant \max\left(|u-v|_{X_{nT}},\sup_{nT\leqslant |t|\leqslant (n+1)T}|u(t)-v(t)|_{\L^2}\right)\\
        &\leqslant \max\left(2^n|u_0-v_0|_{\L^2},2|u(-nT)-v(-nT)|_{\L^2},2|u(nT)-v(nT)|_{\L^2}\right)\\
        &\leqslant \max\left(2^n|u_0-v_0|_{\L^2},2|u-v|_{X_{nt}}\right)\\
        &\leqslant 2^{n+1}|u_0-v_0|_{\L^2}.
    \end{align*}

\end{proof}

We now turn ourselves to solutions of higher regularity. Recall the energy associated to \cref{Eq:Hartree} is defined in \cref{Eq:DefEnergyHartree} and is formally preserved.

\begin{lemme}\label[lemme]{Lem:AbstractHartreeD(A)}

    Let $w\in\L^\infty$, $I\subset\R$ an interval containing $0$ and $u_0\in\mathrm{D}(A)$. Assume $u$ is a solution of \cref{Eq:Hartree} in $\L^\infty_{loc}(I,\L^2)$ starting from $u_0$. Then, $u\in\L^{\infty}_{loc}(I,\mathrm{D}(A))$ and has constant energy.
    
\end{lemme}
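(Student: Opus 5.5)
We will propagate $\mathrm{D}(A)$-regularity through the Duhamel formula \cref{Eq:DuhamelHartree}, using the fact that for $w\in\L^\infty$ the nonlinearity acts nicely on $\mathrm{D}(A)$. The difficulty is that $A$ is abstract, so we cannot expect $w*|u|^2u\in\mathrm{D}(A)$ for $u\in\mathrm{D}(A)$. Instead we work with the time derivative. Formally $v=\p_t u$ solves the linearized equation
$$\complexI\p_t v + Av + w*|u|^2 v + 2\,w*\re(\overline{u}v)\,u = 0,$$
which is linear in $v$ with coefficients bounded on $\L^2$ by $3|w|_{\L^\infty}|u|_{\L^2}^2$, a constant by mass conservation (\cref{Lem:MassConservation}). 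Then $Au = -\complexI\p_t u - w*|u|^2u$ gives $|Au|_{\L^2}\leqslant |\p_t u|_{\L^2}+|w|_{\L^\infty}|u_0|_{\L^2}^3$.

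To make this rigorous without differentiating, we use difference quotients. By uniqueness (\cref{Lem:AbstractHartreeUniqueness}), $u(\cdot+h)$ is the solution starting from $u(h)$. Subtracting the Duhamel formulas for $u(\cdot+h)$ and $u$ and using the Lipschitz estimate from the previous proposition gives, for $|t|\leqslant T$,
$$|u(t+h)-u(t)|_{\L^2}\leqslant \e^{C|w|_{\L^\infty}|u_0|_{\L^2}^2 |t|}\,|u(h)-u_0|_{\L^2}.$$
We then need $|u(h)-u_0|_{\L^2}\leqslant C|h|$. From Duhamel,
$$u(h)-u_0=(U(h)-1)u_0-\complexI\int_0^hU(h-s)\,w*|u|^2u\,\d s,$$
and $|(U(h)-1)u_0|_{\L^2}\leqslant |h||Au_0|_{\L^2}$ since $u_0\in\mathrm{D}(A)$. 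Hence $u$ is locally Lipschitz in $\L^2$, with constant $L$ bounded on compact sets of $I$.

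Next we transfer this to $\mathrm{D}(A)$. Since $u$ is Lipschitz, $N(u)=w*|u|^2u$ is Lipschitz in $\L^2$ (with $w\in\L^\infty$ and bounded mass). Writing $\int_0^tU(t-s)N(u(s))\,\d s=\int_0^tU(s)N(u(t-s))\,\d s$ and differentiating in $t$ shows that the Duhamel integral is Lipschitz in $t$ in $\L^2$. By the standard semigroup fact, for $f$ Lipschitz into a Hilbert space, the integral $\int_0^tU(t-s)f(s)\,\d s$ lies in $\mathrm{D}(A)$, with $A$-norm bounded by $|f(0)|+t\,\mathrm{Lip}(f)$; reflexivity gives a.e. differentiability. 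Together with $U(t)u_0\in\mathrm{D}(A)$, this yields $u\in\L^\infty_{loc}(I,\mathrm{D}(A))$. We expect this step, the regularity of the inhomogeneous term in $\mathrm{D}(A)$ without a smoothing structure, to be the main obstacle. Our first attempt would be exactly this Lipschitz-forcing argument, proving $\frac{1}{h}(U(h)-1)$ applied to the integral stays bounded.

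Finally we prove energy conservation. Now $u\in W^{1,\infty}_{loc}(I,\L^2)\cap\L^\infty_{loc}(I,\mathrm{D}(A))$ and the equation holds in $\L^2$, so $t\mapsto\langle -Au,u\rangle$ is differentiable a.e. (or we use $P_N$ as in \cref{Lem:MassConservation}). Its derivative is
$$2(-Au,\p_tu)_{\L^2}=2(w*|u|^2u,\p_tu)_{\L^2},$$
because $(\complexI\p_tu,\p_tu)_{\L^2}=0$. The nonlinear term differentiates to $\frac{\d}{\d t}\frac14\int|u|^2w*|u|^2=(w*|u|^2u,\p_tu)_{\L^2}$, using that $w$ is even, i.e. $w\in\mathfrak{W}$ (otherwise symmetrize). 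Hence $\frac{\d}{\d t}\mathcal E(u)=0$, and integrating gives constant energy.
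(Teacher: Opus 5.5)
Your proof is correct and follows the paper's scheme. You apply the Gronwall bound to the difference quotients $u(t+h)-u(t)$, where the paper formally sets $f=\p_tu$ and writes its linearized equation in mild form. You then obtain $u(t)\in\mathrm{D}(A)$ with a local bound, and conclude with $\frac{\d}{\d t}\mathcal{E}(u)=-(\complexI(Au+w*|u|^2u),Au+w*|u|^2u)_{\L^2}=0$. Your difference-quotient and Lipschitz-forcing steps supply the justification that the paper's formal differentiation, and its bound $|u|_{\mathrm{D}(A)}\leqslant|\p_tu|_{\L^2}+\cdots$, take for granted.

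You are also right that evenness of $w$ is used; the paper uses it silently in $\nabla\mathcal{E}(u)=-Au-w*|u|^2u$. However, drop ``otherwise symmetrize''. For a general real $w\in\L^\infty$ one finds $\frac{\d}{\d t}\mathcal{E}(u)=-\im\int (w_a*|u|^2)\,\overline{u}\,Au\,\d x$, with $w_a(x)=\frac{1}{2}(w(x)-w(-x))$. This need not vanish: for $A=\Delta-|x|^2$ it equals $\int\nabla(w_a*|u|^2)\cdot\im(\overline{u}\nabla u)\,\d x$. So the energy statement must be read with $w$ even, as in all its applications in the paper.
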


\begin{proof}

    Let $u$ be a solution of \cref{Eq:Hartree} in $\L^\infty_{loc}(I,\L^2)$ starting from $u_0$. First, by \cref{Lem:MassConservation} $|u(t)|_{\L^2}=|u_0|_{\L^2}$ for all $t\in I$. Then, it holds
    \begin{align*}
        |u|_{\mathrm{D}(A)}&\leqslant|\p_tu|_{\L^2}+|w*|u|^2u|_{\L^2}+|u_0|_{\L^2}\\
        &\leqslant|\p_tu|_{\L^2} + |w|_{\L^\infty}|u_0|_{\L^2}^3+|u_0|_{\L^2}.
    \end{align*}
    Let $f=\p_tu$, then $f(0)=\complexI\left(Au_0+w*|u_0|^2u_0\right)\in\L^2$ and $f$ solves
    $$\complexI\p_tf+Af+w*|u|^2f+2\re(w*(\overline{u}f))u=0.$$
    We write it in mild form to get
    $$f(t)=\e^{\complexI t A}f(0)-i\int_0^t\e^{\complexI (t-s) A}\left[w*|u|^2f+2\re(w*(\overline{u}f))u\right](s)\d s.$$
    By taking the $\L^2$ norm we get
    $$|f(t)|_{\L^2}\leqslant |f(0)|_{\L^2}+3|w|_{\L^\infty}|u_0|_{\L^2}^2\int_0^t|f(s)|_{\L^2}\d s.$$
    Then, by Gronwall lemma, for all $t\in I$,
    $$|f(t)|_{\L^2}\leqslant |f(0)|_{\L^2}\exp\left(3|t||w|_{\L^\infty}|u_0|_{\L^2}^2\right)$$
    which is locally bounded. Thus, $u\in\L^{\infty}_{loc}(I,\mathrm{D}(A))$. Hence, $\mathcal{E}(u)\in\mathcal{D}'(I)$ and has weak derivative
    \begin{align*}
        \frac{\d}{\d t}\mathcal{E}(u) &=\left(\p_t u,\nabla\mathcal{E}(u)\right)_{\L^2}\\
        &= - \left(\complexI(Au+w*|u|^2u),Au+w*|u|^2u\right)_{\L^2} = 0
    \end{align*}
    as $\p_t u(t),\nabla\mathcal{E}(u(t))\in\L^2$ for almost every $t\in I$. Thus, $\mathcal{E}(u)$ is constant.

\end{proof}

We can now state our general statement.

\begin{cor}\label[cor]{Cor:GWPQ(A)}

    Let $A$ be an unbounded self-adjoint operator on $\L^2$ satisfying \cref{Hyp:Q(A)}. Let $w\in\mathfrak W$ and $u_0\in\Q(A)$. There exists a unique maximal $\Q(A)$-valued solution of \cref{Eq:Hartree} starting from $u_0$. Moreover, it is global and has constant $\L^2$-norm and energy and verifies
    \begin{equation}\label{Eq:ControlNormQ(A)u}
            \left|u\right|_{\L^\infty_t\Q(A)}^2 \lesssim \frac{1}{c_A}\left((1+C_A)|u_0|_{\Q(A)}^2 + \left(1+\frac{1}{c_A}\right) \left[w^2\right]|u_0|_{\L^2}^6+\delta_A|u_0|_{\L^2}^2\right).
    \end{equation}

\end{cor}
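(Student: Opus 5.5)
The plan is a compactness argument built on the bounded-kernel theory. We approximate $w$ by the truncations $w^M\in\L^\infty$, which satisfy \cref{Eq:Control[wM2]}, and approximate $u_0$ by data $u_0^n\in\mathrm{D}(A)$ converging to $u_0$ in $\Q(A)$; for instance $u_0^n=\indic{|A|\leqslant n}u_0$. The preceding proposition gives a global $\L^2$ solution $u^{n,M}$ for each pair. By \cref{Lem:AbstractHartreeD(A)} this solution lies in $\L^\infty_{loc}(\R,\mathrm{D}(A))$, and it conserves both mass and the energy $\mathcal{E}_{M}$ built with $w^M$.

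The first key step is a uniform $\Q(A)$ bound from the conservation laws. \cref{Hyp:Q(A)} 1., conservation of $\mathcal{E}_M$ and \cref{Eq:Control[w]QL2} give
$$c_A|u(t)|_{\Q(A)}^2\leqslant 2\mathcal{E}_M(u_0)+\frac12\int|u|^2w^M*|u|^2+\delta_A|u_0|_{\L^2}^2 .$$
The interaction term is bounded by $\frac12\left[w^2\right]^{1/2}|u(t)|_{\Q(A)}|u_0|_{\L^2}^3$, and Young's inequality $ab\leqslant \frac{c_A}{2}a^2+\frac{1}{2c_A}b^2$ absorbs it into the left-hand side at the price of $\frac{1}{c_A}\left[w^2\right]|u_0|_{\L^2}^6$. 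The initial energy is handled the same way: $2\mathcal{E}_M(u_0)\leqslant C_A|u_0|_{\Q(A)}^2+\left[w^2\right]^{1/2}|u_0|_{\Q(A)}|u_0|_{\L^2}^3\lesssim (1+C_A)|u_0|_{\Q(A)}^2+\left[w^2\right]|u_0|_{\L^2}^6$. Together these yield \cref{Eq:ControlNormQ(A)u} uniformly in $n$ and $M$. The absorption is why the statement carries the factors $1/c_A$ and $1+1/c_A$.

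The second step, which I expect to be the main obstacle, is passing to the limit. The family is bounded in $\L^\infty_t\Q(A)$. The equation bounds $\p_t u$ in $\L^\infty_t\Q(A)'$: for the linear part this is clear, and for the nonlinear part we use $|w^M*|u|^2 u|_{\Q(A)'}\leqslant |w^M*|u|^2|_{\L^\infty}|u|_{\L^2}$. By Aubin--Lions (or Arzelà--Ascoli), using the compact embedding $\Q(A)\hookrightarrow\L^2\cap\L^4$ from \cref{Hyp:Q(A)} 2., a subsequence converges in $\mathcal{C}([-T,T],\L^2\cap\L^4)$ and weakly-* in $\L^\infty_t\Q(A)$.

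To identify the nonlinear limit, I would split
$$w^M*|u^M|^2u^M-w*|u|^2u=(w^M-w)*|u^M|^2u^M+w*(|u^M|^2-|u|^2)u^M+w*|u|^2(u^M-u).$$
The first term tends to zero in $\Q(A)'$, or at least in the sense of distributions, because $w^M\to w$ in $\L^2+\L^\infty$ with disjoint-support pieces. On the $\L^2$ part this uses $|u^M|^2\in\L^2\cap\L^1$, which follows from the $\L^4$ bound. The remaining terms are controlled by the strong $\L^2\cap\L^4$ convergence together with Young's inequality. The limit $u$ is then a weak solution in $\L^\infty\Q(A)$. Lower semicontinuity of the norm transfers the bound \cref{Eq:ControlNormQ(A)u}.

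For uniqueness, $\Q(A)\subset\L^2\cap\L^4$ gives $u\in\L^\infty_{loc}\L^4$, so \cref{Lem:AbstractHartreeUniqueness} applies with $r_1=2$, $r_2=\infty$. The same lemma ensures the whole sequence converges. Mass conservation follows from \cref{Lem:MassConservation}. For the energy, conservation of $\mathcal{E}_M$ and strong convergence give $\mathcal{E}(u(t))\leqslant\mathcal{E}(u_0)$ by weak lower semicontinuity of the quadratic part. Time reversibility then gives the reverse inequality, so the energy is constant. Constant energy together with strong $\L^2$ convergence upgrades weak convergence to norm convergence in $\Q(A)$, giving $u\in\mathcal{C}(\R,\Q(A))$. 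Globality follows because $T$ was arbitrary and the bound is time-independent.
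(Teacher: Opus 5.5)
Your proposal is correct and follows essentially the same route as the paper. Both arguments use the truncation $w^M$, energy conservation for $\mathrm{D}(A)$ data from \cref{Lem:AbstractHartreeD(A)}, the uniform $\Q(A)$ bound via Young absorption, Aubin--Lions--Simon compactness in $\L^2\cap\L^4$, identification through the weak formulation, uniqueness from \cref{Lem:AbstractHartreeUniqueness}, and energy conservation by weak lower semicontinuity plus time reversal.

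The only structural difference is in how you approximate. You approximate the data and the kernel simultaneously and rely on a single compactness step. The paper instead first treats $\Q(A)$ data with bounded $w$ through continuous dependence in $\L^2$, and only then truncates $w$. Your additional upgrade to $u\in\mathcal{C}(\R,\Q(A))$ via energy conservation is a correct bonus, and it is the regularity that \cref{Prop:PicklKnowles} actually asks for.
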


\begin{proof}

    We decompose the proof into several steps.
    \begin{itemize}
        \item \textbf{Energy estimate for bounded interactions :} Assume $w\in\L^\infty$. Let $(u_0^n)\subset\mathrm{D}(A)$ be a sequence converging to $u_0$ in $\Q(A)$. Let $u^n$ be the unique solution of \cref{Eq:Hartree} starting from $u_0^n$. Then, $u^n$ converges in $\L^\infty_{loc}(\R,\L^2)$ to $u$. Moreover, by \cref{Lem:AbstractHartreeD(A),Lem:MassConservation}, $u^n\in\L^\infty_{loc}(\R,\mathrm{D}(A))$ and has constant mass and energy. Thus,
        \begin{align*}
            \left|u^n(t)\right|_{\Q(A)}^2&\leqslant \frac{1}{c_A} \langle -Au^n(t),u^n(t)\rangle+\frac{\delta_A}{c_A}|u_0|_{\L^2}^2\\
            &\lesssim \frac{1}{c_A}\left(\mathcal{E}(u^n(t)) + \int \left|u^n(t)\right|^2w*\left|u^n(t)\right|^2\d x+\delta_A|u_0|_{\L^2}^2\right)\\
            &\lesssim \frac{1}{c_A}\left(\mathcal{E}(u^n_0) + |w|_{\L^\infty}|u_0|_{\L^2}^4+\delta_A|u_0|_{\L^2}^2\right).
        \end{align*}
        As $u_0^n$ converges to $u_0$ in $\Q(A)$, $(u^n)_n$ is uniformly bounded in $\L^\infty(\R,\Q(A))$. By Banach-Alaoglu theorem and convergences of $u^n$ to $u$ in  $\L^\infty_{loc}(\R,\L^2)$, $u\in\L^\infty(\R,\Q(A))$ and $u^n(t)$ converges weakly to $u(t)$ in $\Q(A)$ for almost every $t\in\R$. Thus, by sequential weak lower semicontinuity of quadratic forms,
        \begin{align*}
            \frac{1}{2}\left\langle -Au(t),u(t)\right\rangle &\leqslant \liminf_{n\to+\infty} \frac{1}{2}\left\langle -Au^n(t),u^n(t)\right\rangle\\
            &\leqslant \liminf_{n\to+\infty} \mathcal{E}(u_0^n)+\frac{1}{4}\int \left|u^n(t)\right|^2w*\left|u^n(t)\right|^2\d x\\
            &\leqslant \mathcal{E}(u_0)+\frac{1}{4}\int \left|u(t)\right|^2w*\left|u(t)\right|^2\d x.
        \end{align*}
        Thus, $\mathcal{E}(u(t))\leqslant\mathcal{E}(u_0)$ and we conclude $\mathcal{E}(u(t))=\mathcal{E}(u_0)$  by reversing time.
        \item \textbf{Energy estimate for general interactions :} Let $w\in\mathfrak{W}$ and for $M\in\N^*$, 
        $$w^M=w_\infty+w_2\indic{|w_2|\leqslant M}+M\indic{w_2>M}-M\indic{w_2<-M},$$
        with $w=w_2+w_\infty\in\L^2+\L^\infty$ a decomposition of $w$ with disjoint supports. Let $u^M$ be the unique solution of \cref{Eq:Hartree} associated to $w^M$ and starting from $u_0$. By the previous point, $u^M\in\L^\infty(\R,\Q(A))$ and has constant energy $\mathcal{E}_M$. Thus, by \cref{Eq:Control[w]QL2,Eq:Control[wM2],Lem:MassConservation},
        \begin{align*}
            \left|u^M(t)\right|_{\Q(A)}^2 &\leqslant \frac{1}{c_A}\left( \left\langle -Au^M(t),u^M(t)\right\rangle+\delta_A|u_0|_{\L^2}^2\right)\\
            &\lesssim \frac{1}{c_A}\left(\mathcal{E}_M(u_0) + \int \left|u^M(t)\right|^2w^M*\left|u^M(t)\right|^2\d x+\delta_A|u_0|_{\L^2}^2\right)\\
            &\lesssim \frac{1}{c_A}\left(C_A|u_0|_{\Q(A)}^2 + \left[(w^M)^2\right]^{\frac{1}{2}}\left(|u_0|_{\Q(A)}+\left|u^M(t)\right|_{\Q(A)}\right)|u_0|_{\L^2}^3+\delta_A|u_0|_{\L^2}^2\right)\\
            &\lesssim \frac{1}{c_A}\left(C_A|u_0|_{\Q(A)}^2 + \left[w^2\right]^{\frac{1}{2}}\left(|u_0|_{\Q(A)}+\left|u^M(t)\right|_{\Q(A)}\right)|u_0|_{\L^2}^3+\delta_A|u_0|_{\L^2}^2\right). 
        \end{align*}
        By Young's inequality, we get
        \begin{equation}\label{Eq:ControlNormQ(A)uM}
            \left|u^M(t)\right|_{\Q(A)}^2 \lesssim \frac{1}{c_A}\left((1+C_A)|u_0|_{\Q(A)}^2 + \left(1+\frac{1}{c_A}\right) \left[w^2\right]|u_0|_{\L^2}^6+\delta_A|u_0|_{\L^2}^2\right).
        \end{equation}
        Thus, $(u^M)_M$ is uniformly bounded in $\L^\infty(\R,\Q(A))$. Using \cref{Eq:Hartree,Hyp:Q(A)}, $(\p_t u^M)_M$ is uniformly bounded in $\L^\infty(\R,\Q'(A))$. For $T>0$, by compactness of the embedding of $\Q(A)$ in $\L^2\cap\L^4$, using Aubin-Lions-Simon lemma \cite{SimonAubinLions} and Banach-Alaoglu theorem, up to a subsequence, there exists $u\in\L^\infty([-T,T],\Q(A))$ such that $u^M$ converges strongly to $u$ in $\L^\infty([-T,T],\L^2\cap\L^4)$ and for almost every $t\in[-T,T]$, $u^M(t)$ converges weakly to $u(t)$ in $\Q(A)$.
        \item \textbf{Identification of the limit :} By \cref{Lem:AbstractHartreeUniqueness}, there is at most one local $\L^2\cap\L^4$-valued solution of \cref{Eq:Hartree} starting from $u_0$. As $u\in\L^\infty([-T,T],\L^2\cap\L^4)$, it is sufficient to show $u$ verifies \cref{Eq:Hartree}. Let $\phi\in\Q(A)$ and $M\in\N^*$, it holds, for $t\in[-T,T]$,
        \begin{equation}
            \langle \complexI u^M(t),\phi\rangle-\langle \complexI u_0,\phi\rangle = \int_{0}^t \left(\langle -Au^M(s),\phi\rangle - \re\int w^M*\left|u^M(s)\right|^2u^M(s)\overline{\phi}\right)\d s.
        \end{equation}
        As $u^M$ converges uniformly to $u$ in $\L^2\cap\L^4$, $\langle \complexI u^M(t),\phi\rangle$ converges to $\langle \complexI u(t),\phi\rangle$ and
        $$\int w^M*\left|u^M(t)\right|^2u^M(t)\overline{\phi}$$
        converges uniformly to
        $$\int w*\left|u(t)\right|^2u(t)\overline{\phi},$$
        by using the multilinear bound
        $$\left|\int v*(fg)h\phi\right|\leqslant|v|_{\L^2+\L^\infty}|f|_{\L^2\cap\L^4}|g|_{\L^2\cap\L^4} |h|_{\L^2}|\phi|_{\L^2}.$$
        Finally, for almost every $s\in[-T,T]$, $\langle -Au^M(s),\phi\rangle$ converges to $\langle -Au(s),\phi\rangle$ and is bounded by a constant uniform in $M$. Thus, by dominated convergence, it holds
        \begin{equation}
            \langle \complexI u(t),\phi\rangle-\langle \complexI u_0,\phi\rangle = \int_{0}^t \left(\langle -Au(s),\phi\rangle - \re\int w*\left|u(s)\right|^2u(s)\overline{\phi}\right)\d s,
        \end{equation}
        i.e. $u$ is a solution of \cref{Eq:Hartree} in $\L^\infty([-T,T],\Q(A))\subset\L^\infty([-T,T],\L^2\cap\L^4)$. Let $u_N$ be a solution of \cref{Eq:Hartree} in $\L^\infty([-N,N],\Q(A))$. By \cref{Lem:AbstractHartreeUniqueness} it is unique, thus $(u_{N+1})_{|[-N,N]}=u_N$ and
        $$u(t) = \sum_{N=1}^{+\infty} u_N(t)\indic{N-1\leqslant |t|<N}$$
        is the unique maximal $\Q(A)$-valued solution of \cref{Eq:Hartree} starting from $u_0$.
        \item \textbf{Conservation laws of the limit :} By \cref{Lem:MassConservation}, the $\L^2$-norm of $u$ is constant. Now, let $t\in\R$, by sequential weak lower semicontinuity and the convergences obtained in the previous point, it holds
        \begin{align*}
            \frac{1}{2}\langle -Au(t),u(t)\rangle &\leqslant \liminf_{M\to+\infty}\frac{1}{2}\left\langle -Au^M(t),u^M(t)\right\rangle\\
            &\leqslant \liminf_{M\to+\infty} \mathcal{E}_M(u_0) + \frac{1}{4}\int \left|u^M(t)\right|^2w^M*\left|u^M(t)\right|^2\\
            &\leqslant \mathcal{E}(u_0) + \frac{1}{4}\int \left|u(t)\right|^2w*\left|u(t)\right|^2.
        \end{align*}
        Thus, $\mathcal{E}(u(t))\leqslant\mathcal{E}(u_0)$ and we conclude $\mathcal{E}(u(t))=\mathcal{E}(u_0)$ by reversing time. Finally, from \cref{Eq:ControlNormQ(A)uM}, we deduce \cref{Eq:ControlNormQ(A)u}.
     \end{itemize}
\end{proof}

\section{The confining Anderson hamiltonian}

\subsection{Hermite-Sobolev spaces}\label{Sub:Confining_potential}

We denote by $H=\Delta-|x|^2$ the Hermite operator. It is well-known that this operator has eigenfunctions $(h_k)_{k\in\N}$ verifying the relation $-H h_k = \lambda_k^2 h_k$ where $\lambda_k^2\sim c_d k^{\frac{1}{d}}$. Moreover, $(h_k)_{k\in\N}$ is a complete orthonormal system of $\L^2(\mathbb{R}^d,\R)$.\\

Let $s\in\R$ and $p\in(1,+\infty)$ and define
$$\W^{s,p} =\left\{u\in\mathcal{S}'(\mathbb{R}^2), (-H)^{\frac{s}{2}}u\in\L^p(\mathbb{R}^2)\right\}$$
endowed with the norm $$|u|_{\W^{s,p}} = |(-H)^{\frac{s}{2}}u|_{\L^p}.$$
It is known that these spaces are Banach spaces and that for $q$ such that $\frac{1}{p}+\frac{1}{q}=1$, we have $\left(\W^{s,p}\right)'=\W^{-s,q}$ with equal norm. Moreover, the subspace $\vspan\left\{h_k, k\in\N\right\}$ is dense in $\W^{s,q}$-spaces (see for example \cite{BongioanniHSspaces} for the case $s\geqslant0$) and for $s\geqslant0$, an equivalent norm is given for $p\in(1,+\infty)$ by (see \cite{Dziubanski})
$$|u|_{\W^{s,p}}\approx|u|_{\H^{s,p}}+|\langle x\rangle^s u|_{\L^p},$$
where $\H^{s,p}$ are the Bessel-Sobolev spaces with norm
$$|u|_{\H^{s,p}}=\left|\F^{-1}\langle \eta\rangle^{s}\F u\right|_{\L^p_\eta}.$$
This shows $\W^{s,p}=\left\{u\in \H^{s,p}, \langle x\rangle^s u\in\L^p\right\}.$ Using this norm equivalence, we define $\W^{2,p}$ for $p\in\{1,+\infty\}$ as
$$\W^{2,p} = \left\{u\in \L^p,\; \forall \alpha\in\N^d, \left(|\alpha|\leqslant 2 \Rightarrow \p^\alpha u\in\L^p\right) \text{ and } \langle x\rangle^{2} u\in\L^p\right\}$$
endowed with the norm
$$|u|_{\W^{2,p}_x} = \sum_{|\alpha|\leqslant 2}|\p^\alpha u|_{\L^p}+|\langle x\rangle^{2} u|_{\L^p_{x}}$$
and extend this definition to $s\in(0,2)$ by complex interpolation. Moreover inspired by the relation $\left(\W^{s,p}\right)'=\W^{-s,q}$, for $s\in[-2,0)$, we define $\W^{s,\infty} = \left(\W^{-s,1}\right)'$. As usual, we cannot define spaces of negative $\L^1$ regularity as dual spaces of positive $\L^\infty$ regularity.

\begin{rem}{~}
    We do not claim that the first definition of $\W^{s,p}$ agrees with the one we give for spaces over $\L^1$ and $\L^\infty$. We use the same notation for simplicity as we will always control $\W^{s,\infty}$ norms by Sobolev embeddings.
\end{rem}

The choices we made ensure us that we can interpolate between spaces of positive regularity, and give us some other convenient properties that we need in our analysis. For example, our choices allow us to have a simple product rule on our spaces, whose proof follows from Leibniz rule, Hölder inequality and complex bilinear interpolation. We start with product rules, recalling an important estimate going back to Kato and Ponce \cite{KatoPonce}. 
\begin{prop}{(Theorem 1.4 in \cite{GulisashviliKon96})} \label[prop]{Prop:Kato-Ponce}
    Let $r\in(1,+\infty)$ and $s\geqslant 0$. For any $1<p_1,q_1,p_2,q_2\leqslant +\infty$ with $\frac{1}{r}=\frac{1}{p_j}+\frac{1}{q_j}$ ($j\in\{1,2\}$), there exists $C>0$ such that, for any $f\in \mathrm{H}^{s,p_1}\cap\L^{p_2}(\R^2)$ and any $g\in \mathrm{H}^{s,q_2}\cap\L^{q_1}(\R^2)$, it holds
    $$|fg|_{\mathrm{H}^{s,r}}\leqslant C\left(|f|_{\mathrm{H}^{s,p_1}}|g|_{\L^{q_1}}+|f|_{\L^{p_2}}|g|_{\mathrm{H}^{s,q_2}}\right).$$    
\end{prop}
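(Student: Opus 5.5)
The plan is to prove this fractional Leibniz rule with a Littlewood--Paley/paraproduct decomposition, in the spirit of Coifman--Meyer and Kato--Ponce. Let $(\Delta_j)_{j\geqslant -1}$ be an inhomogeneous dyadic decomposition on $\R^2$ and $S_j=\sum_{k<j-1}\Delta_k$ the associated low-frequency cut-offs. Since $1<r<+\infty$, the Littlewood--Paley square function characterisation gives
$$|F|_{\H^{s,r}}\approx\Big|\Big(\sum_j 2^{2js}|\Delta_jF|^2\Big)^{1/2}\Big|_{\L^r}.$$
This is the only place where finiteness of $r$ and $r>1$ enter, and it is why no square-function argument is needed on the $\L^\infty$ factors. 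I split Bony's decomposition
$$fg=\sum_j S_jf\,\Delta_jg+\sum_j \Delta_jf\,S_jg+\sum_{|j-k|\leqslant 1}\Delta_jf\,\Delta_kg=:\Pi_1+\Pi_2+\Pi_3$$
and estimate each piece separately.

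\textbf{Paraproducts $\Pi_1,\Pi_2$.} The $j$-th term of $\Pi_2$ has Fourier support in an annulus of size $2^j$. The square function estimate for almost-orthogonal pieces therefore gives
$$|\Pi_2|_{\H^{s,r}}\lesssim \Big|\Big(\sum_j 2^{2js}|\Delta_jf|^2|S_jg|^2\Big)^{1/2}\Big|_{\L^r}\leqslant \Big|\sup_j|S_jg|\Big(\sum_j2^{2js}|\Delta_jf|^2\Big)^{1/2}\Big|_{\L^r}.$$
I bound $\sup_j|S_jg|\lesssim \mathcal{M}g$, where $\mathcal{M}$ is the Hardy--Littlewood maximal function, and then apply Hölder with exponents $(p_1,q_1)$. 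Since $\mathcal{M}$ is bounded on $\L^{q_1}$ for $q_1\in(1,+\infty]$ and $p_1>1$, this yields $|f|_{\H^{s,p_1}}|g|_{\L^{q_1}}$. The same argument with the roles of $f$ and $g$ exchanged, using $(p_2,q_2)$, gives $|f|_{\L^{p_2}}|g|_{\H^{s,q_2}}$ for $\Pi_1$. When $p_1=+\infty$ (resp. $q_2=+\infty$) the $\L^{p_1}$-norm of the square function is no longer equivalent to the $\H^{s,\infty}$ norm. In that case I instead use the equivalent formulation of \cite{GulisashviliKon96}, working with $\langle D\rangle^s$ acting directly on $S_jf\,\Delta_jg$ through a Coifman--Meyer bilinear multiplier bound.

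\textbf{Resonant term $\Pi_3$.} This is the step I expect to be the main obstacle. The frequencies of $\Delta_jf\,\Delta_kg$ with $|j-k|\leqslant1$ are only localised in a ball of radius $\sim2^j$, not in an annulus, so almost-orthogonality fails. Here the hypothesis $s\geqslant0$ is used, and I would argue as follows. Write $\Delta_m\Pi_3=\sum_{j\geqslant m-3}\Delta_m(\Delta_jf\,\widetilde\Delta_jg)$ and use $2^{ms}\leqslant 2^{js}2^{-(j-m)s}$. Then
$$2^{ms}|\Delta_m\Pi_3|\lesssim \sum_{\ell\geqslant -3}2^{-\ell s}\,\mathcal{M}\big(2^{(m+\ell)s}\Delta_{m+\ell}f\,\widetilde\Delta_{m+\ell}g\big).$$
For $s>0$ the geometric factor is summable. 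For $s=0$ the statement reduces to Hölder's inequality, since $\H^{0,p}=\L^p$ for $p\in(1,+\infty)$, so no summation is needed. I then apply the Fefferman--Stein vector-valued maximal inequality in $\L^r(\ell^2)$ and bound $|\widetilde\Delta_jg|\lesssim\mathcal{M}g$. Hölder with $(p_1,q_1)$ then gives $|\Pi_3|_{\H^{s,r}}\lesssim|f|_{\H^{s,p_1}}|g|_{\L^{q_1}}$.

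Summing the three bounds gives the claim. Since the result is classical (Kato--Ponce \cite{KatoPonce}, with the endpoint exponents treated in \cite{GulisashviliKon96}), in the paper it is legitimate simply to invoke it. The sketch above isolates the only delicate points: the resonant term, and the endpoints $p_1=+\infty$ or $q_2=+\infty$ in the paraproducts.
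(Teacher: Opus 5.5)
The paper does not prove this statement. It is quoted as Theorem 1.4 of \cite{GulisashviliKon96}, so, as you say, invoking it is all that is required. Your Bony-decomposition sketch is the standard proof and is sound when $p_1,q_2<+\infty$. The Coifman--Meyer route you allude to for the paraproduct endpoints does work, but you should state it concretely. Set $\phi_j(\xi)=2^{js}\langle\xi\rangle^{-s}\psi(2^{-j}\xi)$, so that $\Delta_jf=2^{-js}\phi_j(D)\langle D\rangle^sf$. Then $\langle D\rangle^s\Pi_2$ is a bilinear operator acting on $(\langle D\rangle^sf,g)$ with symbol $\sum_j\langle\xi+\eta\rangle^s\langle\xi\rangle^{-s}\psi(2^{-j}\xi)\chi(2^{-j}\eta)$. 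This symbol satisfies the Coifman--Meyer bounds because $|\xi+\eta|\sim|\xi|$ on its support, so the operator maps $\L^\infty\times\L^r\to\L^r$.

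\textbf{The gap: the resonant term at the endpoint.} This case is not on your list of delicate points, but the statement allows $p_1=q_2=+\infty$, hence $q_1=p_2=r$.
Your chain for $\Pi_3$ ends with H\"older in $(p_1,q_1)$, that is, with $\big|\big(\sum_j2^{2js}|\Delta_jf|^2\big)^{1/2}\big|_{\L^\infty}|\mathcal{M}g|_{\L^r}$. The first factor is not controlled by $|f|_{\H^{s,\infty}}$ --- the very obstruction you noted for the paraproducts. Exchanging $f$ and $g$ fails for the same reason, since $q_2=+\infty$. Coifman--Meyer does not apply directly either: for general $s$, the factor $\langle\xi+\eta\rangle^s\langle\xi\rangle^{-s}$ violates the symbol bounds where $|\xi+\eta|\ll|\xi|\sim|\eta|$.

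\textbf{The repair.} Exploit the fact that in $\Pi_3$ both factors live at the same frequency, so the $\ell^2$ sum can be carried by the undifferentiated factor. Set $G_j=2^{js}\Delta_jf\,\widetilde\Delta_jg$. Then $|G_j|\leqslant\sup_k|\phi_k(D)\langle D\rangle^sf|_{\L^\infty}|\widetilde\Delta_jg|\lesssim|f|_{\H^{s,\infty}}|\widetilde\Delta_jg|$, since the $\phi_k$ have uniformly integrable kernels. After your Minkowski/Fefferman--Stein step, you conclude with $\big|\big(\sum_j|\widetilde\Delta_jg|^2\big)^{1/2}\big|_{\L^r}\lesssim|g|_{\L^r}$, using $1<r<+\infty$. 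This $\ell^\infty$--$\ell^2$ splitting is unavailable for $\Pi_1,\Pi_2$, because there $\big(\sum_j|S_jg|^2\big)^{1/2}$ is not controlled; that is why the paraproducts need Coifman--Meyer. With this step added, your sketch covers the full range of exponents in the statement.
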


The proof of \cref{Lem:ProductRule,Cor:ProductRuleNegPosReg,Prop:action_d/dx,Sobolev-embeddings} below are given in \cite{Mackowiak_2025}.

\begin{lemme}\label[lemme]{Lem:ProductRule}
    Let $s\geqslant0$ and $1\leqslant p,q,r\leqslant+\infty$ such that $\frac{1}{p}+\frac{1}{q}=\frac{1}{r}$. There exists a constant $C>0$ such that for all $u\in\W^{s,p}$ and $v\in \mathrm{H}^{s,q}$, we have $uv\in\W^{s,r}$ and $|uv|_{\W^{s,r}}\leqslant C |u|_{\W^{s,p}}|v|_{\mathrm{H}^{s,q}}$.   
\end{lemme}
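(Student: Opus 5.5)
The plan is to reduce to the two endpoint regularities $s=0$ and $s=2$ and then interpolate bilinearly. We work with the Hermite-Sobolev norm equivalence $|u|_{\W^{s,p}}\approx|u|_{\H^{s,p}}+|\langle x\rangle^s u|_{\L^p}$. For $p\in\{1,+\infty\}$ and $s=2$ we instead use the explicit definition of $\W^{2,p}$ by derivatives and weights.

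\textbf{Case $s=0$.} Here $\W^{0,p}=\L^p$ and $\H^{0,q}=\L^q$, so the claim is exactly Hölder's inequality.

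\textbf{Case $s=2$.}
\begin{itemize}
\item[(a)] For the weighted part, Hölder gives $|\langle x\rangle^2 uv|_{\L^r}\leqslant |\langle x\rangle^2 u|_{\L^p}|v|_{\L^q}$, and $|v|_{\L^q}\lesssim |v|_{\H^{2,q}}$.
\item[(b)] For the derivative part, the Leibniz rule gives $\p^\alpha(uv)=\sum_{\beta\leqslant\alpha}\binom{\alpha}{\beta}\p^\beta u\,\p^{\alpha-\beta}v$ with $|\alpha|\leqslant2$. Each term is bounded by Hölder by $|\p^\beta u|_{\L^p}|\p^{\alpha-\beta}v|_{\L^q}$, which is at most $|u|_{\W^{2,p}}|v|_{\H^{2,q}}$.
\item[(c)] Step (b) uses $|\p^\beta u|_{\L^p}\lesssim|u|_{\W^{2,p}}$ and $|\p^\gamma v|_{\L^q}\lesssim|v|_{\H^{2,q}}$ for $|\beta|,|\gamma|\leqslant 2$. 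For $1<q<\infty$ this follows from Mikhlin multiplier bounds. For $q\in\{1,\infty\}$ one reads $\H^{2,q}$ with the same derivative-based definition, and then it holds by definition.
\item[(d)] This yields $|uv|_{\W^{2,r}}\lesssim|u|_{\W^{2,p}}|v|_{\H^{2,q}}$ whenever all the $\W^{2,\cdot}$ spaces are read via derivatives and weights. For $1<p,r<\infty$ this reading agrees with the spectral definition by Dziubański's equivalence.
\end{itemize}

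\textbf{Intermediate $s\in(0,2)$.} Fix $p,q,r$ and apply complex bilinear interpolation to the bilinear map $(u,v)\mapsto uv$. It is bounded $\W^{0,p}\times\H^{0,q}\to\W^{0,r}$ and $\W^{2,p}\times\H^{2,q}\to\W^{2,r}$, so it is bounded on $[\W^{0,p},\W^{2,p}]_{s/2}\times[\H^{0,q},\H^{2,q}]_{s/2}\to[\W^{0,r},\W^{2,r}]_{s/2}$. The remaining task is to identify these interpolation spaces with $\W^{s,\cdot}$ and $\H^{s,q}$.
\begin{itemize}
\item For $1<p<\infty$, $[\L^p,\W^{2,p}]_{s/2}=\W^{s,p}$ because $(-H)^{it}$ is bounded on $\L^p$ with polynomial growth in $t$, since $-H$ has a Hörmander-type functional calculus.
\item For $p\in\{1,\infty\}$ the identity holds by the paper's definition of $\W^{s,p}$ as an interpolation space.
\item The same reasoning applies to $\H^{s,q}$, using imaginary powers of $1-\Delta$.
\end{itemize}

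For $s>2$ one can iterate, for instance by writing $s=2k+\sigma$ and using higher Leibniz expansions with the weights $\langle x\rangle^{s}$. Alternatively, for $1<p,q,r<\infty$, the Kato–Ponce estimate of \cref{Prop:Kato-Ponce} handles the $\H^{s,r}$ part directly. The weight part is again Hölder.

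The main obstacle I expect is the interpolation step at the endpoints $p$ or $r\in\{1,\infty\}$. There the $\W^{2,p}$ spaces are defined ad hoc, and one must check that the family is a compatible couple and that the bilinear interpolation theorem applies without density issues. For $\L^\infty$ this may require interpolating on the closure of Schwartz functions, or using the lower complex method. I would handle this by proving the endpoint $s=2$ bound for the derivative/weight norms, which is purely Hölder–Leibniz, and then relying on the definition of these spaces as complex interpolation spaces.
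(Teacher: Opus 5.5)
Your skeleton (H\"older at $s=0$, Leibniz plus H\"older at $s=2$, complex bilinear interpolation in between) is the route the paper indicates, and it works whenever $1<q<\infty$. The step that fails is your claim that $[\H^{0,q},\H^{2,q}]_{s/2}=\H^{s,q}$ also holds for $q\in\{1,\infty\}$ via ``imaginary powers of $1-\Delta$''. The operators $(1-\Delta)^{\complexI\tau}$ are unbounded on $\L^1$ and $\L^\infty$, and the identity itself is false. Suppose $\H^{2,\infty}$ is read through derivatives, as you are forced to do (see below). Then $[\L^\infty,W^{2,\infty}]_{s/2}$ lies in the closure of the classical $W^{2,\infty}$ for the H\"older--Zygmund norm of order $s$. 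That closure misses $(1-\Delta)^{-s/2}g$ whenever $g\in\L^\infty$ has a jump. So at $q=\infty$ your interpolation controls $v$ only in a norm strictly stronger than $|v|_{\H^{s,\infty}}$. This is not a side case: the lemma is used with $q=\infty$, for $v=\rho$ measured in $\H^{\frac14,\infty}$, in \cref{Eq:GenContinuityEstimate1particle} and \cref{Lem:GenQuasiCoercivEstimate1particle}.

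For $1<p,r<\infty$ (any $q\in(1,\infty]$, any $s\geqslant0$), the fix is to avoid interpolation altogether. \cref{Prop:Kato-Ponce}, recalled right before the lemma, allows $q_1=q_2=\infty$ and gives $|uv|_{\H^{s,r}}\lesssim|u|_{\H^{s,p}}|v|_{\L^q}+|u|_{\L^p}|v|_{\H^{s,q}}$. Combine this with three ingredients:
\begin{itemize}
\item $|v|_{\L^q}\lesssim|v|_{\H^{s,q}}$, since the Bessel kernel is integrable;
\item H\"older for $|\langle x\rangle^s uv|_{\L^r}$;
\item Dziuba\'nski's equivalence, applied to $u$ and to $uv$.
\end{itemize}
This covers in particular $p=r=2$, $q=\infty$. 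It also replaces your unproved ``iterate'' remark for $s>2$.

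The fully endpoint configurations are $q=\infty$ with $p=r\in\{1,\infty\}$, and $(p,q,r)=(\infty,1,1)$. There your interpolation only yields the bound with $v$ measured in $[\L^q,W^{2,q}]_{s/2}$, and the Bessel-norm version is not available in general. For instance, take $d=2$, $p=q=r=\infty$ and $s=2$. Let $u\in\mathcal{C}_c^\infty$ equal $1$ near $0$, and let $v=\chi(x)x_1x_2\log|x|$ with $\chi$ a smooth cutoff. Since $\Delta(x_1x_2\log|x|)=4x_1x_2|x|^{-2}$, one has $(1-\Delta)v\in\L^\infty$. Yet $\p_1\p_2(uv)=\log|x|+O(1)$ near $0$, so $uv\notin\W^{2,\infty}$.
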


\begin{cor}\label[cor]{Cor:ProductRuleNegPosReg}
    Let $s\geqslant0$, $1<p<+\infty$ and $1\leqslant q,r\leqslant+\infty$ such that $1-\frac{1}{p}+\frac{1}{q}=1-\frac{1}{r}$. There exists a constant $C>0$ such that for all $u\in\W^{-s,r}$ and $v\in \mathrm{H}^{s,q}$, we have $uv\in\W^{-s,p}$ and $|uv|_{\W^{-s,p}}\leqslant C |u|_{\W^{-s,r}}|v|_{\mathrm{H}^{s,q}}$.
\end{cor}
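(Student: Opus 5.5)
The plan is to obtain \cref{Cor:ProductRuleNegPosReg} from \cref{Lem:ProductRule} by duality. The exponent relation $1-\frac{1}{p}+\frac{1}{q}=1-\frac{1}{r}$ says exactly that, writing $p'$ and $r'$ for the conjugate exponents of $p$ and $r$, we have $\frac{1}{r'}=\frac{1}{p'}+\frac{1}{q}$. \cref{Lem:ProductRule} then applies with the triple $(p',q,r')$: for $\phi\in\W^{s,p'}$ and $v\in\H^{s,q}$, the product satisfies $v\phi\in\W^{s,r'}$ and
$$|v\phi|_{\W^{s,r'}}\leqslant C|\phi|_{\W^{s,p'}}|v|_{\H^{s,q}}.$$
Since $1<p<+\infty$, we have $1<p'<+\infty$. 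Hence $\W^{-s,p}=\left(\W^{s,p'}\right)'$ with equal norms, and the span of the Hermite functions is dense in $\W^{s,p'}$.

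The argument then runs as follows.

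First, fix $u\in\W^{-s,r}$ and $v\in\H^{s,q}$. Define $uv$ as a distribution by
$$\langle uv,\phi\rangle:=\langle u,v\phi\rangle$$
for test functions $\phi$. This pairing makes sense because $v\phi\in\W^{s,r'}$ and $\W^{-s,r}$ pairs with $\W^{s,r'}$. When $r\in(1,+\infty)$ this is the duality $\left(\W^{s,r'}\right)'=\W^{-s,r}$. When $r=+\infty$ it is the definition $\W^{-s,\infty}=\left(\W^{s,1}\right)'$. When $r=1$, which would need $\W^{-s,1}$, the relation forces $q=p'$, i.e. $r'=\infty$; I would treat this endpoint by pairing $\W^{-s,1}$ against $\W^{s,\infty}$, or restrict it as the paper's conventions allow.

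Second, estimate the pairing:
$$|\langle uv,\phi\rangle|\leqslant |u|_{\W^{-s,r}}|v\phi|_{\W^{s,r'}}\leqslant C|u|_{\W^{-s,r}}|v|_{\H^{s,q}}|\phi|_{\W^{s,p'}}.$$
Third, extend by density of finite Hermite expansions in $\W^{s,p'}$. This shows that $uv$ defines a bounded functional on $\W^{s,p'}$ with norm at most $C|u|_{\W^{-s,r}}|v|_{\H^{s,q}}$. Identifying $\left(\W^{s,p'}\right)'$ with $\W^{-s,p}$ gives the claim.

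The main obstacle I expect is the consistency of this distributional definition of $uv$ with the pointwise product when $u$ is regular. I would check that for $u$ in the Hermite span both definitions agree, and then pass to general $u$ by density in $\W^{-s,r}$ when $r<\infty$, or by weak-$*$ approximation when $r=\infty$. Care is also needed at the endpoints $r\in\{1,+\infty\}$, where the paper's nonstandard definitions of $\W^{s,1}$ and $\W^{s,\infty}$ must match the pairings used. This is where I would invoke the convention that \cref{Lem:ProductRule} holds for the full range $1\leqslant p,q,r\leqslant+\infty$.
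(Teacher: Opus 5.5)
Your duality argument is correct and is the intended one. The paper defers the proof to \cite{Mackowiak_2025}, and the statement is a corollary of \cref{Lem:ProductRule} precisely through $\langle uv,\phi\rangle=\langle u,v\phi\rangle$, with the lemma applied to $(p',q,r')$ and the identification $(\W^{s,p'})'=\W^{-s,p}$.

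One correction: the case $r=1$ cannot occur, since $\frac{1}{r'}=\frac{1}{p'}+\frac{1}{q}\geqslant\frac{1}{p'}>0$ forces $r>1$. It does not force $q=p'$, and pairing $\W^{-s,1}$ with $\W^{s,\infty}$ is explicitly unavailable in the paper's conventions. So the only endpoint is $r=\infty$, which is covered by the definition $\W^{-s,\infty}=(\W^{s,1})'$. Also, since $uv$ is defined by this duality, you need no separate consistency or approximation argument.
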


On $\W^{s,q}$ spaces, derivation and multiplication by a power of $\langle x\rangle$ act directly on the regularity exponent.

\begin{prop}(see Corollary 2.5 in \cite{Mackowiak_2025})  \label[prop]{Prop:action_d/dx}
    Let $s\in\R$, $j\in\{1,2\}$ and $q\in(1,+\infty)$, then $\p_j,x_j\in\mathcal{L}(\W^{s,q},\W^{s-1,q})$.
    
\end{prop}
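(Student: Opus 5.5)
The plan is to reduce everything to the ladder structure of the Hermite operator. For $j\in\{1,2\}$ introduce the annihilation and creation operators $a_j=x_j+\p_j$ and $a_j^*=x_j-\p_j$, so that
$$\p_j=\tfrac12(a_j-a_j^*),\qquad x_j=\tfrac12(a_j+a_j^*).$$
It therefore suffices to prove $a_j,a_j^*\in\mathcal{L}(\W^{s,q},\W^{s-1,q})$ for all $s\in\R$ and $q\in(1,+\infty)$. Because $\vspan\{h_k\}$ is dense in every $\W^{s,q}$, I will prove the estimates on finite Hermite expansions and extend by density.

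\textbf{Step 1: intertwining relations.} Using $-H=\sum_j \tfrac12(a_ja_j^*+a_j^*a_j)$ and $[a_j,a_j^*]=2$, one gets
$$(-H)a_j=a_j(-H-2),\qquad (-H)a_j^*=a_j^*(-H+2).$$
In spectral terms, $a_j$ lowers the eigenvalue by $2$ and $a_j^*$ raises it by $2$. Applying the functional calculus on finite Hermite sums gives
$$(-H)^{\frac{s-1}{2}}a_j=a_j\,m_-(-H),\qquad m_-(\lambda)=(\lambda-2)_+^{\frac{s-1}{2}}.$$
Here the positive part is harmless: on the eigenspaces where $\lambda-2\le 0$, $a_j$ acts by zero, since it lowers the degree of the Hermite functions in the variable $x_j$ below $0$. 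Similarly, $(-H)^{\frac{s-1}{2}}a_j^*=a_j^*(-H+2)^{\frac{s-1}{2}}$.

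\textbf{Step 2: factorisation.} With these relations,
$$|a_ju|_{\W^{s-1,q}}=\bigl|a_j(-H)^{-\frac12}\;\bigl[m_-(-H)(-H)^{\frac{1-s}{2}}\bigr]\;(-H)^{\frac s2}u\bigr|_{\L^q},$$
and the analogous factorisation holds for $a_j^*$. Two ingredients are then needed:
\begin{itemize}
\item the boundedness on $\L^q$, $1<q<\infty$, of the Hermite Riesz transforms $a_j(-H)^{-1/2}$ and $a_j^*(-H)^{-1/2}$, a classical result of Thangavelu obtainable from Calder\'on--Zygmund theory for the Hermite semigroup;
\item the $\L^q$ boundedness of the spectral multipliers
$$m(-H)=\Bigl(\tfrac{(-H\mp2)_+}{-H}\Bigr)^{\frac{s-1}{2}}.$$
\end{itemize}
For the second point, I will invoke a Marcinkiewicz/H\"ormander-type multiplier theorem for Hermite expansions. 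Its hypotheses are met because $\lambda\mapsto((\lambda\mp2)/\lambda)^{(s-1)/2}$ is a symbol of order $0$ on the spectrum $\lambda\ge d$. The finitely many low eigenvalues where $\lambda-2\le0$ only contribute finite-rank, hence bounded, perturbations on $\L^q$, and the $h_k$ lie in every $\L^q$.

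\textbf{Step 3: negative regularity.} For $s\le 0$ the same factorisation argument works verbatim, since the multiplier and Riesz transform bounds do not depend on the sign of $s$. As a consistency check one can also argue by duality, using $(\W^{s,p})'=\W^{-s,q}$ and the fact that the formal adjoints of $\p_j$ and $x_j$ are $-\p_j$ and $x_j$. Combining this with the density of $\vspan\{h_k\}$ concludes the proof.

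The main obstacle is Step 2. It relies on two non-trivial harmonic analysis facts for the Hermite operator: the Riesz transform bounds and a multiplier theorem adapted to the shifted symbol. I would cite Thangavelu's results for both, and the delicate point to check is the treatment of the low modes, where the shift $-H-2$ is not positive (for instance in $d=1$).
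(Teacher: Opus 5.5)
Your argument is correct. The paper gives no proof of this statement: it is imported from Corollary 2.5 of \cite{Mackowiak_2025}, so there is no in-text argument to compare against, and yours is a legitimate self-contained route.

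Writing $\p_j=\frac12(a_j-a_j^*)$ and $x_j=\frac12(a_j+a_j^*)$, the intertwining relations $(-H)a_j=a_j(-H-2)$ and $(-H)a_j^*=a_j^*(-H+2)$ let you move $(-H)^{\frac{s-1}{2}}$ across the ladder operators. What remains is exactly two classical facts:
\begin{itemize}
\item Thangavelu's $\L^q$-boundedness of the Hermite Riesz transforms $a_j(-H)^{-\frac12}$ and $a_j^*(-H)^{-\frac12}$;
\item a Hermite spectral multiplier theorem for the order-zero symbols $(\frac{\lambda\mp2}{\lambda})^{\frac{s-1}{2}}$.
\end{itemize}
This route treats all $s\in\R$ at once, because neither ingredient depends on the sign of $s$. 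Your Step 3 is therefore redundant.

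One point of hygiene. For $d=2$ the ground-state eigenvalue is $\lambda=2$, so when $s<1$ the expression $(\lambda-2)_+^{\frac{s-1}{2}}$ is $0$ raised to a negative power and is undefined. Simply assign the multiplier any finite value, e.g. $0$, on that eigenspace. This is harmless because $a_jh_{(0,0)}=0$, and with this convention your finite-rank treatment of the low modes goes through as written.
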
   

The following corollary follows from \cref{Prop:action_d/dx} by Stein's complex interpolation method \cite{SteinInterpolation,VoigtAbstractInterpolation}.

\begin{cor}\label[cor]{Cor:action_V}

    Let $\alpha\geqslant 0$, $s\in\R$ and $q\in(1,+\infty)$. Then, $\langle x\rangle^{\alpha}\in\mathcal{L}(\W^{s,q},\W^{s-\alpha,q})$.
    
\end{cor}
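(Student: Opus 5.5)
The statement to prove is \cref{Cor:action_V}: for $\alpha\geqslant 0$, $s\in\R$ and $q\in(1,+\infty)$, multiplication by $\langle x\rangle^{\alpha}$ is bounded from $\W^{s,q}$ to $\W^{s-\alpha,q}$. I would treat integer $\alpha$ first and then interpolate.

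\textbf{Step 1: integer exponents.} For $\alpha=0$ there is nothing to prove. For even integers $\alpha=2m$, $\langle x\rangle^{2m}=(1+|x|^2)^m$ is a polynomial in $x_1,\dots,x_d$ of degree $2m$. Composing \cref{Prop:action_d/dx} $k$ times shows that a monomial of degree $k\leqslant 2m$ maps $\W^{s,q}$ to $\W^{s-k,q}$. Since $(-H)^{-1/2}$ is bounded on $\L^q$ (its eigenvalues $\lambda_k^{-1}$ are bounded, and it is given by a positive kernel), we have the continuous embedding $\W^{s-k,q}\hookrightarrow\W^{s-2m,q}$ for $k\leqslant 2m$. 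Hence $\langle x\rangle^{2m}\in\mathcal{L}(\W^{s,q},\W^{s-2m,q})$ for every $s\in\R$.

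\textbf{Step 2: analytic family.} For general $\alpha$, I fix an integer $2m\geqslant\alpha$ and consider, on the strip $0\leqslant\re z\leqslant 1$, the family
$$T_z=(-H)^{\frac{s-2mz}{2}}\,\langle x\rangle^{2mz}\,(-H)^{-\frac{s}{2}}.$$
I would check it is admissible in Stein's sense by pairing against finite combinations of Hermite functions, which are dense in every $\W^{s,q}$. For such $f,g$, the map $z\mapsto\langle T_z f,g\rangle$ is analytic, and its growth is at most exponential, since $|\langle x\rangle^{2mz}|=\langle x\rangle^{2m\re z}$.

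\textbf{Step 3: boundary estimates.} On the line $\re z=0$ we have $T_{iy}=(-H)^{\frac{s}{2}-imy}\langle x\rangle^{2imy}(-H)^{-\frac{s}{2}}$. The factor $\langle x\rangle^{2imy}$ has modulus one, so it is bounded on $\L^q$. The imaginary powers $(-H)^{-imy}$ are bounded on $\L^q$ with norm $O(\langle y\rangle^{N})$, because $-H$ has a Gaussian-bounded heat kernel (a spectral multiplier theorem of Hörmander type for the Hermite operator). This polynomial growth is what makes the family admissible. On the line $\re z=1$ we have $T_{1+iy}=(-H)^{-imy}\circ\big[(-H)^{\frac{s-2m}{2}}\langle x\rangle^{2m}(-H)^{-\frac{s}{2}}\big]\circ(-H)^{\ldots}$-type compositions of imaginary powers with the Step 1 operator. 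Its $\L^q$ bound is likewise polynomial in $y$.

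\textbf{Step 4: conclusion and main obstacle.} Stein interpolation at $z=\alpha/(2m)$ then shows that $(-H)^{\frac{s-\alpha}{2}}\langle x\rangle^{\alpha}(-H)^{-\frac s2}$ is bounded on $\L^q$, which is exactly the claim. The main obstacle is the bounded imaginary power estimate for $-H$ on $\L^q$ with polynomial growth in $y$. I would simply invoke the known spectral multiplier results for Hermite operators rather than prove this.
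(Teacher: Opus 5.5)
\textbf{Verdict: the overall route matches the paper, but there is a genuine gap in your Step 3: as written, the boundary estimates only close for $s=0$.}

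The paper's entire proof is the remark that the corollary follows from \cref{Prop:action_d/dx} by Stein's complex interpolation. Your Steps 1--2 and the family $T_z$ are a reasonable way to make that precise.

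The problem is that $(-H)^{\pm s/2}$ commute with $(-H)^{-imy}$ but not with multiplication. On the two boundary lines you actually have
\[
T_{iy}=(-H)^{-imy}\Big[(-H)^{\frac s2}\langle x\rangle^{2imy}(-H)^{-\frac s2}\Big],\qquad T_{1+iy}=(-H)^{-imy}\Big[(-H)^{\frac{s-2m}{2}}\langle x\rangle^{2m}(-H)^{-\frac s2}\Big]\Big[(-H)^{\frac s2}\langle x\rangle^{2imy}(-H)^{-\frac s2}\Big].
\]
So on both lines you need multiplication by $\langle x\rangle^{2imy}$ to be bounded on $\W^{s,q}$. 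On $\re z=0$ this is even equivalent to the required $\L^q$ bound.

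The fact that $|\langle x\rangle^{2imy}|=1$ gives this only on $\L^q=\W^{0,q}$. For $s\neq0$, a unimodular function need not be a multiplier of $\W^{s,q}$ at all. Your ``$(-H)^{\ldots}$-type compositions'' on $\re z=1$ hides exactly this last factor, which is not an imaginary power of $-H$.

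The missing ingredient is an estimate
\[
|\langle x\rangle^{i\tau}u|_{\W^{s,q}}\lesssim\langle\tau\rangle^{C}|u|_{\W^{s,q}}\qquad\text{for all } s\in\R,
\]
with growth in $\tau$ mild enough to keep the family admissible. You can supply it with the paper's own tools:
\begin{itemize}
\item Since $|\partial^\beta\langle x\rangle^{i\tau}|\lesssim_\beta\langle\tau\rangle^{|\beta|}$, you get $|\langle x\rangle^{i\tau}|_{\H^{\sigma,\infty}}\lesssim\langle\tau\rangle^{k}$ for any even integer $k\geqslant\sigma$. Indeed, $\langle D\rangle^{\sigma}=\langle D\rangle^{\sigma-k}(1-\Delta)^{k/2}$, and $\langle D\rangle^{\sigma-k}$ is convolution with an $\L^1$ Bessel kernel.
\item Apply \cref{Lem:ProductRule} for $s\geqslant0$, and \cref{Cor:ProductRuleNegPosReg} with $|s|$ for $s<0$. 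Take the $\H$-exponent equal to $+\infty$ and both $\W$-exponents equal to $q$. This gives the bound with polynomial growth in $\tau$.
\end{itemize}
With this lemma inserted, both boundary families are bounded polynomially in $y$, and your interpolation at $z=\alpha/(2m)$ goes through.
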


The Sobolev-Hermite spaces verify some continuous embeddings as in the classical Sobolev framework. We will also refer to these embeddings as Sobolev embeddings.

\begin{prop}{(Sobolev embeddings)}\label[prop]{Sobolev-embeddings}
    Let $1< p\leqslant q<+\infty$ and $s>\sigma$ such that $\frac{1}{p}-\frac{s}{2}\leqslant\frac{1}{q}-\frac{\sigma}{2}$. Then $\W^{s,p}$ is continuously embedded in $\W^{\sigma, q}$. Moreover if $1<p< q\leqslant+\infty$ and $s>\sigma$ such that $\frac{1}{p}-\frac{s}{2}<\frac{1}{q}-\frac{\sigma}{2}$ and $\sigma\in[-2,2]$ if $q=+\infty$, then $\W^{s,p}$ is compactly embedded in $\W^{\sigma,q}$.
    
\end{prop}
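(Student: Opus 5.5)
We treat the continuous and the compact embeddings separately. Throughout we work on $\R^2$, so the critical index is $\frac{s}{2}$.

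\textbf{Continuous embedding.} Set $\alpha=s-\sigma>0$ and $g=(-H)^{s/2}u\in\L^p$. Then
$$(-H)^{\sigma/2}u=(-H)^{-\alpha/2}g,$$
so it suffices to show that $(-H)^{-\alpha/2}$ is bounded from $\L^p$ to $\L^q$ whenever $\frac{\alpha}{2}\geqslant\frac1p-\frac1q$ and $1<p\leqslant q<+\infty$. We write the subordination formula
$$(-H)^{-\alpha/2}=\frac{1}{\Gamma(\alpha/2)}\int_0^{+\infty}t^{\alpha/2-1}\e^{tH}\,\d t.$$
Mehler's formula, or equivalently Feynman--Kac with the nonnegative potential $|x|^2$, gives the pointwise domination $0\leqslant \e^{tH}(x,y)\leqslant \e^{t\Delta}(x,y)$. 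Moreover the spectral gap of $-H$ (its lowest eigenvalue is $2$) yields the extra factor $\e^{-ct}$ for $t\geqslant1$; this can also be read off directly from Mehler's kernel. Hence the kernel of $(-H)^{-\alpha/2}$ is bounded by $C\,G_\alpha(x-y)$, where $G_\alpha$ is the Bessel potential kernel of order $\alpha$ on $\R^2$. This kernel satisfies $G_\alpha(z)\lesssim|z|^{\alpha-2}$ near $0$ and decays exponentially at infinity. We then conclude in two cases:
\begin{itemize}
\item in the critical case $\frac{\alpha}{2}=\frac1p-\frac1q$, by Hardy--Littlewood--Sobolev;
\item in the subcritical case, by Young's inequality, since then $G_\alpha\in\L^{r}$ with $1+\frac1q=\frac1r+\frac1p$.
\end{itemize}

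\textbf{Compact embedding for $q<+\infty$.} Because the inequality is strict, we can pick $\eps>0$ small so that the continuous embedding $\W^{s,p}\hookrightarrow\W^{\sigma+\eps,q}$ from the first part still holds. It then suffices to show that $\W^{\sigma+\eps,q}\hookrightarrow\W^{\sigma,q}$ is compact after a spatial localization. Let $\chi_R=\chi(\cdot/R)$ be a smooth cutoff and split $u=\chi_Ru+(1-\chi_R)u$.

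\emph{Tail.} We write
$$(1-\chi_R)u=\big[(1-\chi_R)\langle x\rangle^{-\eps}\big]\,\langle x\rangle^{\eps}u.$$
By \cref{Cor:action_V}, $\langle x\rangle^{\eps}u\in\W^{\sigma,q}$. For $\sigma\geqslant0$, \cref{Lem:ProductRule} applied with the multiplier in $\H^{k,\infty}$ (with $k\geqslant\sigma$ an integer) gives the bound, since $|(1-\chi_R)\langle x\rangle^{-\eps}|_{\H^{k,\infty}}\lesssim R^{-\eps}$. For $\sigma<0$ we use \cref{Cor:ProductRuleNegPosReg}, or duality, instead. In both cases the tail map is small in operator norm as $R\to+\infty$.

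\emph{Local part.} The map $u\mapsto\chi_Ru$ is compact from $\W^{\sigma+\eps,q}$ to $\W^{\sigma,q}$. This follows from the classical Rellich--Kondrachov compactness for Bessel spaces $\H^{\sigma+\eps,q}\to\H^{\sigma,q}$ on functions supported in a fixed ball, together with the norm equivalence $|\cdot|_{\W^{s,p}}\approx|\cdot|_{\H^{s,p}}+|\langle x\rangle^s\cdot|_{\L^p}$; the weight is harmless on a ball. For negative $\sigma$ one argues by duality instead.

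The embedding is therefore a norm limit of compact operators, hence compact.

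\textbf{The case $q=+\infty$.} Here $\sigma\in[-2,2]$, and we choose a large finite $\tilde q$ and $\tilde\sigma>\sigma$ with $\frac1p-\frac s2<\frac1{\tilde q}-\frac{\tilde\sigma}{2}$ and $\tilde\sigma-\frac2{\tilde q}>\sigma$. We then compose:
\begin{itemize}
\item the compact embedding $\W^{s,p}\hookrightarrow\W^{\tilde\sigma,\tilde q}$ from the previous step;
\item a continuous embedding $\W^{\tilde\sigma,\tilde q}\hookrightarrow\W^{\sigma,\infty}$.
\end{itemize}
For the continuous embedding with $\sigma\in\{0,2\}$ we use Morrey/Sobolev for Bessel spaces together with $\langle x\rangle^{\sigma}u\in\L^\infty$, obtained from \cref{Cor:action_V} and the same Morrey argument; intermediate $\sigma\in(0,2)$ then follow by complex interpolation. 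For $\sigma<0$ we argue by duality against $\W^{-\sigma,1}$, using $\W^{-\sigma,1}\hookrightarrow\W^{-\tilde\sigma,\tilde q'}$.

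I expect this last step to be the main obstacle. The endpoint spaces $\W^{s,1}$ and $\W^{s,\infty}$ are defined ad hoc, through derivatives and weights, interpolation and duality, rather than through powers of $-H$. The embedding into them must therefore be checked against these specific definitions, especially at $\sigma=\pm2$ and across the interpolation.
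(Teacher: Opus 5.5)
The paper gives no proof of this proposition; it defers it to \cite{Mackowiak_2025}. So there is nothing in the text to compare your argument against. Judged on its own, your proof is correct, with a few steps at the $q=+\infty$ endpoint only asserted (listed below).

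\textbf{Continuous embedding.} The route suggested by the paper's own toolkit uses, for nonnegative indices, the norm equivalence $|u|_{\W^{s,p}}\approx|u|_{\H^{s,p}}+|\langle x\rangle^s u|_{\L^p}$. Classical Bessel--Sobolev embeddings then control the $\H^{\sigma,q}$ part, and \cref{Cor:action_V} gives $\langle x\rangle^{\sigma}u\in\W^{s-\sigma,p}\hookrightarrow\L^q$. Negative or mixed-sign indices are reached by duality and by factoring through some $\W^{0,r}$. Your domination of the kernel of $(-H)^{-\alpha/2}$ by $C\,G_\alpha(x-y)$ replaces all of this with one estimate valid for every real $s>\sigma$, with no sign cases. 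The cost is importing heat-kernel facts: Feynman--Kac, and for large $t$ either Mehler or the spectral gap combined with the Gaussian bound. HLS in the critical case is legitimate, since $\frac{\alpha}{2}=\frac1p-\frac1q<1$ forces $\alpha<2$.

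\textbf{Compactness for $q<+\infty$.} Your cutoff/tail argument is correct. The same kernel bound gives a shorter route:
\begin{itemize}
\item $(-H)^{-\eps/2}$ is compact on $\L^2$, since its eigenvalues $\lambda_k^{-\eps}$ tend to $0$;
\item it is bounded on every $\L^r$, $1\leqslant r\leqslant+\infty$, since $G_\eps\in\L^1$;
\item by Krasnosel'skii's interpolation theorem (all exponents may be taken finite), it is therefore compact on every $\L^q$, $1<q<+\infty$.
\end{itemize}
This is exactly compactness of $\W^{\sigma+\eps,q}\hookrightarrow\W^{\sigma,q}$ for all $\sigma\in\R$ at once, with no sign split and no duality for $\sigma<0$.

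\textbf{The case $q=+\infty$.} Your composition scheme is sound, but two ingredients are only asserted.
\begin{itemize}
\item For $\sigma\in(0,2)$ you need $\left(\W^{\sigma_0,\tilde q},\W^{\sigma_0+2,\tilde q}\right)_{\sigma/2}=\W^{\tilde\sigma,\tilde q}$ with $\sigma_0=\tilde\sigma-\sigma>\frac{2}{\tilde q}$. This holds because $-H$ has bounded imaginary powers on $\L^{\tilde q}$.
\item For $\sigma<0$, the embedding $\W^{-\sigma,1}\hookrightarrow\W^{-\tilde\sigma,\tilde q'}$ has an $\L^1$ source, so your first part does not cover it. Take $r=\tilde q'$ and interpolate two endpoint embeddings:
\begin{itemize}
\item $\L^1\hookrightarrow\W^{-\tau,r}$ for $\tau>\frac{2}{r'}$, dual to $\W^{\tau,r'}\hookrightarrow\L^\infty$;
\item $\W^{2,1}\hookrightarrow\W^{\tau,r}$ for $0\leqslant\tau<\frac{2}{r}$, from the classical $W^{2,1}(\R^2)\hookrightarrow\H^{\tau,r}$ together with $|\langle x\rangle^\tau f|_{\L^r}^r\leqslant|\langle x\rangle^2 f|_{\L^1}|f|_{\L^\infty}^{r-1}$ for the weight.
\end{itemize}
The interpolated index can be any number below $-\sigma-\frac{2}{\tilde q}$. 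This is exactly what your choice $\tilde\sigma-\frac{2}{\tilde q}>\sigma$ requires.
\end{itemize}
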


\subsection{Abstract Anderson operator}

We will present a construction of the Anderson operator $H+\xi$ on $\R^d$ ($d\in\{1,2\}$) through its quadratic form, inspired by \cite{MackowiakStrichartzConfAnderson} (see also \cite{mouzard2023simple} for an analogous construction on the torus). Namely, we will show there exists $Y\in\W^{2-\frac{d}{2}-,\infty}$ and $Z\in\W^{0-,\infty}$ such that the form domain of $H+\xi$ is given by $\e^Y\W^{1,2}$ and
$$\langle(H+\xi)u_1,u_2\rangle = \re\int_{\R^d} \nabla v_1\cdot\overline{\nabla v_2}\e^{2Y}+|x|^2(1-Y)u_1\overline{u_2}\d x - \langle Zu_1,u_2\rangle$$
for $v_j=\e^{-Y}u_j\in\W^{1,2}$. The enhanced noise parameter $\theta=(\xi,Y,Z)$ is then a continuous function of the enhanced noise $\Xi$ constructed in \cite{MackowiakStrichartzConfAnderson} in dimension 2. This abstract setting allows to treat both dimension 1 and 2 at the same time for most of the argument. In dimension 3, this abstract construction is inspired by \cite{mouzard2023simple} and the formal renormalization procedure detailed above. It provides a class of formal renormalizations of the operator $H+\xi$. Proving that one can in fact construct stochastic objects in order to treat the 3d Anderson-Hermite operator will be the object of future work.

We first give a heuristic before giving the rigorous construction. We start with the Sobolev regularity of the spatial white noise. Recall that a white noise $\xi$ is a real-valued centered Gaussian field over $\mathcal{S}(\R^d)$ verifying
$$\forall \phi,\psi\in\mathcal{S}(\R^d)\; \left[\langle \xi,\phi\rangle_{\mathcal{S}',\mathcal{S}}\langle \xi,\psi\rangle_{\mathcal{S}',\mathcal{S}}\right]=\langle\phi,\psi\rangle_{\L^2}.$$
Equivalently, one can define $\xi$ as the random series 
$$\xi=\sum_{k\in\N}\xi_k h_k,$$
where $(\xi_k)_{k\in\N}$ are i.i.d. real-valued standard Gaussian variables and the series converges almost surely in $\mathcal{S}'(\R^d)$.

\begin{lemme}[Corollary 2.3.6. in \cite{PhDMackowiak}]
    Let $d\in\N^*$. Almost surely, for any $q\in[2,+\infty]$ and $s>\frac{d}{q}>\sigma$, $\xi\in\W^{-\frac{d}{2}-s,q}$ and $\xi\notin\W^{-\frac{d}{2}-\sigma,q}$.
\end{lemme}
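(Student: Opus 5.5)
The plan is to work with the Hermite expansion $\xi=\sum_k \xi_k h_k$ and to compute the moments of the Hermite–Sobolev norms of $\xi$ directly. The positive statement then comes from Gaussian hypercontractivity combined with Sobolev embedding, and the negative statement from a zero–one/divergence argument.

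\textbf{Positive part.} Fix $q\in[2,+\infty)$ and $s>\frac dq$. Put $\alpha=-\frac d2-s$, so that
$$(-H)^{\frac{\alpha}{2}}\xi=\sum_k \lambda_k^{\alpha}\xi_k h_k .$$
For each $x$ this is a centered Gaussian with variance $K_\alpha(x)=\sum_k\lambda_k^{2\alpha}h_k(x)^2$. Gaussian moments give
$$\mathbb{E}\,|(-H)^{\alpha/2}\xi|_{\L^q}^q\lesssim_q \int K_\alpha(x)^{q/2}\,\d x .$$
I will bound this integral through pointwise estimates on the Hermite spectral function $\sum_{\lambda_k^2\le \Lambda}h_k(x)^2$. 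This function is of order $\Lambda^{d/2}$ on the classically allowed region $|x|\lesssim\sqrt\Lambda$ and decays rapidly outside it. Decomposing dyadically in $\lambda_k^2\sim 2^j$ gives
$$K_\alpha(x)\lesssim \sum_j 2^{j\alpha}2^{jd/2}\indic{|x|\lesssim 2^{j/2}} .$$
Since $\alpha+\frac d2=-s<0$, this is $\lesssim\langle x\rangle^{-2s}$. The integral $\int\langle x\rangle^{-sq}\,\d x$ is finite exactly when $sq>d$, which is the condition $s>\frac dq$. Taking countably many rational $(q,s)$ and using monotonicity in $s$ together with the embeddings of \cref{Sobolev-embeddings} gives the almost sure statement for all $q<\infty$ at once. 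For $q=+\infty$, the condition $s>0$ is reached from finite $q$ by \cref{Sobolev-embeddings}: choose $q$ large and $s'$ with $\frac dq<s'<s$, which loses at most $\frac2q$ derivatives in dimension-normalized form.

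\textbf{Negative part.} Fix $\sigma<\frac dq$ and assume $\xi\in\W^{-\frac d2-\sigma,q}$ with positive probability. The event is a measurable linear-subspace event for the Gaussian measure, so by the zero–one law it holds almost surely. Fernique's theorem then makes $\mathbb{E}|(-H)^{\beta/2}\xi|_{\L^q}^q$ finite, with $\beta=-\frac d2-\sigma$. By Gaussianity this expectation is comparable to $\int K_\beta^{q/2}$. A matching lower bound on the spectral function, namely $K_\beta(x)\gtrsim\langle x\rangle^{-2\sigma}$ on a set of full proportion, shows that the integral diverges when $\sigma q\le d$. This is a contradiction. For $q=\infty$ the same divergence appears, or one reduces to a finite $q$ by embedding.

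\textbf{Main obstacle.} The delicate step is the two-sided pointwise control of the Hermite spectral function. The upper bound is classical (Koch–Tataru/Thangavelu type estimates). The lower bound in the allowed region needs a local Weyl law, and I would obtain it via the Mehler kernel, using the heat-kernel identity $K_\beta\approx\int t^{-\beta/2-1}e^{tH}(x,x)\,\d t$ together with the explicit diagonal of the Mehler kernel.
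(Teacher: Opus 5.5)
The paper does not prove this lemma. It quotes it from the author's thesis (Corollary 2.3.6 in \cite{PhDMackowiak}), so there is no in-paper argument to compare against. Your strategy is the natural one and is correct in outline: use Gaussian moments and Fubini to reduce everything to the diagonal $K_\alpha(x)=\sum_k\lambda_k^{2\alpha}h_k(x)^2=(-H)^{\alpha}(x,x)$, and get the converse from the Gaussian zero--one law plus Fernique.

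The step you flag as the main obstacle is in fact elementary. Use the subordination formula $\lambda^{2\alpha}=\Gamma(-\alpha)^{-1}\int_0^\infty t^{-\alpha-1}\e^{-t\lambda^2}\d t$; your exponent $-\beta/2-1$ is off by a factor $2$. Combine it with Mehler's diagonal $\e^{tH}(x,x)=(2\pi\sinh 2t)^{-d/2}\e^{-|x|^2\tanh t}$. Tonelli then gives, for $\alpha=-\frac d2-s$, that $K_\alpha(x)\approx\langle x\rangle^{-2s}$ uniformly in $x$ when $s>0$, and $K_\alpha\equiv+\infty$ when $s\leqslant 0$. This one computation gives both the upper and the lower bound, with no need for Koch--Tataru estimates or a local Weyl law.

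Three places need tightening.

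(i) In the negative part, for $\sigma\leqslant0$ there is no pointwise field with variance $K_\beta$, so the identity $\mathbb{E}|(-H)^{\beta/2}\xi|_{\L^q}^q=c_q\int K_\beta^{q/2}$ has no content there. First use $\W^{-\frac d2-\sigma,q}\subset\W^{-\frac d2-\sigma',q}$ for $\sigma<\sigma'$ to reduce to $\sigma\in(0,\frac dq)$. In that range, $\sum_k\lambda_k^{\beta}\xi_kh_k(x)$ converges in $\L^2(\Omega)$ for each $x$, and it is a version of the $\L^q$-valued variable $(-H)^{\beta/2}\xi$ to which you apply Fernique.

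(ii) You handle the uncountable quantifier ``almost surely, for all $(q,\cdot)$'' only in the positive part. The negative part needs the same reduction. For $\sigma<\frac dq$, take a rational $\tilde q>q$ with $\sigma+\frac dq<\frac{2d}{\tilde q}$ and use $\W^{-\frac d2-\sigma,q}\subset\W^{-\frac d2-\sigma-d(\frac1q-\frac1{\tilde q}),\tilde q}$. Then use monotonicity to pass to a rational index below $\frac d{\tilde q}$.

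(iii) For $q=+\infty$ there are two issues. In the positive part, the auxiliary finite exponent $q$ and $s'$ must satisfy $\frac dq<s'<s-\frac dq$, not just $s'<s$. This is because passing from $\L^q$ to $\L^\infty$ costs $\frac dq$ derivatives, and the paper only states that embedding for target index in $[-2,2]$. In the negative part, ``reduce to finite $q$ by embedding'' needs an embedding out of $\W^{a,\infty}$, namely $\W^{a,\infty}\subset\W^{a-\gamma,q}$ with $\gamma>\frac dq$. This is not among the Sobolev embeddings stated in the paper, which only increase the integrability exponent. You have to prove it separately. For instance, get it by duality from $\W^{\gamma-a,\frac{q}{q-1}}\subset\W^{-a,1}$, which follows from the weighted Hölder bound $|\langle x\rangle^{-a}v|_{\L^1}\leqslant|\langle x\rangle^{-\gamma}|_{\L^q}|\langle x\rangle^{\gamma-a}v|_{\L^{\frac{q}{q-1}}}$ (using $\gamma q>d$), its analogue for derivatives, and interpolation.
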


As $\xi$ is of regularity $-\frac{d}{2}-$, it is too singular to construct directly $H+\xi$ in dimension $d\geqslant 2$. We thus make use of an exponential transform inspired by the one introduced in \cite{Haire_Labbe}. Let $ Y_1=(-H)^{-1}\xi\in\W^{2-\frac{d}{2}-,\infty}$. For $u=\e^{Y_1}v$, it holds formally
\begin{equation}\label{Eq:AndersonOpRenorm0}
    (H+\xi)u=\e^{Y_1}\left(H v + 2\nabla Y_1\cdot\nabla{v}+xY_1\cdot xv+|\nabla Y_1|^2v\right).
\end{equation}
If $d=1$, then $Y_1\in\W^{\frac{3}{2}-,\infty}$, thus one can take $Y=Y_1$ and $Z = |\nabla Y|^2\in\W^{\frac{1}{2}-,\infty}$ and one can define $H+\xi$ through the quadratic form associated to \cref{Eq:AndersonOpRenorm0}. Unfortunately, for $d\geqslant 2$, $Y$ is of regularity $2-\frac{d}{2}-<1$, thus $\nabla Y$ is of negative regularity and thus $|\nabla Y|^2$ is ill-defined. It is well-known that $|\nabla Y_1|^2$ needs to be renormalized (see for example Proposition 3.11. in \cite{Mackowiak_2025} for the 2d case). From now on, assume we have constructed a renormalization $ Z_1\in\W^{2-d-,\infty}$ (as a product of quantities of regularity $1-\frac{d}{2}-<0$) of $|\nabla Y_1|^2$. Then, one can replace $|\nabla Y_1|^2$ by $Z_1$ in \cref{Eq:AndersonOpRenorm0} and get
\begin{equation}\label{Eq:AndersonOpRenorm1}
    (H+\xi)u=\e^{ Y_1}\left(H v + 2\nabla Y_1\cdot\nabla{v} +xY_1\cdot xv+ Z_1v\right)
\end{equation}
Then, \cref{Eq:AndersonOpRenorm1} is sufficient to define $H+\xi$ through its quadratic form in dimension 2.\\

In dimension 3, $ Z_1$ is of regularity $-1-$, as $\xi$ is in dimension 2. Let $ Y_2=(-H)^{-1}Z_1\in\W^{1-,\infty}$, for $u=\e^{ Y_1+ Y_2}v$, it holds formally
\begin{equation}\label{Eq:DefAndersonOp3d1ereRenorm}
    (H+\xi)u=\e^{ Y_1+ Y_2}\left(H v + 2\nabla( Y_1+ Y_2)\cdot\nabla{v} + x(Y_1+Y_2)\cdot xv +(\nabla Y_1\cdot\nabla Y_2)v+ |\nabla  Y_2|^2v\right).
\end{equation}
As in dimension 2, $|\nabla  Y_2|^2$ is ill-defined and should be replaced by a renormalized product $ Z_2$ which formally belongs to $\W^{0-,\infty}$. Likewise, $\nabla Y_1\cdot\nabla Y_2$ is ill-defined as $\nabla Y_1$ is of regularity $-\frac{1}{2}-$ and $\nabla Y_2$ is of regularity $0-$. Thus it has to be replaced by a renormalized product $ Z_{12}$ which formally belongs to $\W^{-\frac{1}{2}-,\infty}$. Thus, \cref{Eq:DefAndersonOp3d1ereRenorm} becomes
\begin{equation}\label{Eq:AndersonOpRenorm2}
    (H+\xi)u=\e^{ Y_1+ Y_2}\left(H v + 2\nabla( Y_1+ Y_2)\cdot\nabla{v}+ x(Y_1+Y_2)\cdot xv +(Z_2+2Z_{12})v\right.
\end{equation}
As $Z_{12}$ is of regularity $-\frac{1}{2}-$ and $\e^{ Y_1+ Y_2}$ is of regularity $\frac{1}{2}-$, even for smooth $v$, $Z_{12}v\e^{ Y_1+ Y_2}$ is ill-defined. Thus, let $Y_3=2(-H)^{-1}Z_{12}$, then for $u=\e^{Y_1+Y_2+Y_3}v$, \cref{Eq:AndersonOpRenorm2} becomes
\begin{equation}\label{Eq:AndersonOpRenorm3}
    (H+\xi)u=\e^{ Y_1+ Y_2+Y_3}\left(H v + 2\nabla( Y_1+ Y_2+Y_3)\cdot\nabla{v}+ x(Y_1+Y_2+Y_3)\cdot xv+ \left(|\nabla Y_3|^2+Z_2\right)v\right),
\end{equation}
which is well-defined as $Y_3\in\W^{\frac{3}{2}-,\infty}$, thus $|\nabla Y_3|^2\in\W^{\frac{1}{2}-,\infty}$. Hence, one can chose $Y=Y_1+Y_2+Y_3$ and $Z=|\nabla Y_3|^2+Z_2$ to define $H+\xi$ through the quadratic form associated to \cref{Eq:AndersonOpRenorm0}.\\

Thus, in dimension $d\in\{1,2\}$, $H+\xi$ can be written as
\begin{equation}\label{Eq:AndersonAbstract}
    (H+\xi)u=\e^{ Y}\left(H v + 2\nabla Y\cdot\nabla{v} + xY\cdot xv+ Zv\right),
\end{equation}
for $u=\e^Y v$, with $Y\in\W^{2-\frac{d}{2}-,\infty}$ and $Z\in\W^{0-,\infty}$. Formally, \cref{Eq:AndersonAbstract} also holds in dimension 3. Let
\begin{equation}\label{Def:ThetaAnderson}
    \Theta=\left\{(\xi,Y, Z)\in\W^{-\frac{d}{2}-,\infty}\times\W^{2-\frac{d}{2}-,\infty}\times\W^{0-,\infty}\right\}.
\end{equation}
Remark that $\Theta$ is a Fréchet space when endowed with the family of seminorms $(N_p)_{p\in\N^*}$ given by
$$N_p(\theta) = |\xi|_{\W^{-\frac{d}{2}-\frac{1}{p},\infty}}+|Y|_{\W^{2-\frac{d}{2}-\frac{1}{p},\infty}}+| Z|_{\W^{-\frac{1}{p},\infty}}.$$
Let us point out $\xi\in\W^{-\frac{d}{2}-,\infty}\mapsto Y_1\in\W^{2-\frac{d}{2}-,\infty}$ is continuous. Thus, in dimension 1 and 2, $\Theta$ can be reduced to the space of enhanced noises
\begin{equation}\label{Def:EnhancedNoiseSpace}
    \mathcal{X}=\left\{(\xi, Z)\in\W^{-\frac{d}{2}-,\infty}\times\W^{0-,\infty}\right\}.
\end{equation}
Moreover, in dimension 1, $Y\in\W^{\frac{3}{2}-,\infty}\mapsto Z=|Y'|^2\in\W^{\frac{1}{2}-,\infty}$ is continuous, thus $\Theta$ can even be reduced to $\W^{-\frac{1}{2}-,\infty}$. Nevertheless, we keep the full parameter $\theta=(\xi,Y,Z)$ to make clearer the dependence on $\theta$ in our estimates.\\

For $\theta\in\Theta$, define $\rho=\e^Y$ and the bilinear form
\begin{equation}\label{Def:AndersonBilinearForm}
    \forall u_1,u_2\in\rho\W^{1,2},\; a_\theta(u_1,u_2) = \re\int \nabla v_1\cdot\overline{\nabla v_2}\rho^2+|x|^2(1-Y)u_1\overline{u_2}\d x - \langle Zu_1,u_2\rangle,
\end{equation}
where $v_j=\rho^{-1}u_j\in\W^{1,2}$. For $s\in[0,1]$, let $\mathcal{D}^{s,2}=\rho\W^{s,2}$ endowed with the natural norm
$$\left|u\right|_{\mathcal{D}^{s,2}}=\left|\rho^{-1}u\right|_{\W^{s,2}}.$$
Then, by complex interpolation applied on $\rho$ and $\rho^{-1}$ seen as multiplication operators, it holds $\left(\mathcal{D}^{1,2},\mathcal{D}^{0,2}\right)_s=\mathcal{D}^{s,2}$ with equal norms,
where $(\cdot,\cdot)_{\cdot}$ is the complex interpolation functor. Moreover, $\mathcal{D}^{0,2}=\L^2$ and 
$$(\inf \rho)|u|_{\L^2}\leqslant|u|_{\mathcal{D}^{0,2}}\leqslant (\sup \rho)|u|_{\L^2}.$$
Then, $(a_\theta,\mathcal{D}^{1,2})$ is a continuous bilinear form as
\begin{equation}\label{Eq:GenContinuityEstimate1particle}
    |a_\theta(u_1,u_2)|\leqslant \left(\left(1+|Y|_{\L^\infty}\right)|\rho|_{\L^\infty}^2 + C|Z|_{\W^{-\frac{1}{4},\infty}}|\rho|_{\H^{\frac{1}{4},\infty}}^2\right)|u_1|_{\mathcal{D}^{1,2}}|u_2|_{\mathcal{D}^{1,2}}
\end{equation}
for some universal constant $C$, by \cref{Lem:ProductRule,Cor:ProductRuleNegPosReg}. Moreover, it quasi-coercive in the following sense.

\begin{lemme}\label[lemme]{Lem:GenQuasiCoercivEstimate1particle}

    There exists a $c>0$ such that for any $\theta\in\Theta$, setting $\delta_\theta=c \e^{32|Y|_{\L^\infty}}\left(|Y|_{\W^{\frac{1}{4},\infty}}^{8}+|Z|_{\W^{-\frac{1}{4},\infty}}^2|\rho|_{\H^{\frac{1}{4},\infty}}^4\right)$, it holds
    $$\forall u\in\mathcal{D}^{1,2},\;  a_\theta(u,u)+\delta_\theta|u|_{\L^2}^2\geqslant \frac{(\inf \rho)^2}{2}|u|_{\mathcal{D}^{1,2}}^2.$$
    
\end{lemme}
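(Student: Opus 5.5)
We write $u=\rho v$ with $v\in\W^{1,2}$ and split $a_\theta(u,u)$ into three pieces: the gradient term $\int|\nabla v|^2\rho^2$, the confining term $\int|x|^2(1-Y)|u|^2$, and the singular term $-\langle Zu,u\rangle$. The aim is to show that the first two pieces control $(\inf\rho)^2|v|_{\W^{1,2}}^2$ up to a lower-order $\L^2$ error. The $Z$-term and the $Y$-correction in the potential are then absorbed by interpolation and Young's inequality, with the constant $\delta_\theta$ collecting the leftovers. We recall that $|v|_{\W^{1,2}}^2\approx|\nabla v|_{\L^2}^2+|xv|_{\L^2}^2+|v|_{\L^2}^2$; the precise normalisation only affects the universal constant $c$.

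\textbf{Gradient and confinement.} For the gradient part we use directly $\int|\nabla v|^2\rho^2\geqslant(\inf\rho)^2|\nabla v|_{\L^2}^2$. For the potential part we write $|x|^2(1-Y)|u|^2=|x|^2\rho^2|v|^2-|x|^2Y\rho^2|v|^2$. The first piece is bounded below by $(\inf\rho)^2|xv|_{\L^2}^2$, so $\int|\nabla v|^2\rho^2+|x|^2\rho^2|v|^2\geqslant(\inf\rho)^2|v|_{\W^{1,2}}^2$ modulo $\L^2$ terms.

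\textbf{The $Y$-correction.} The correction $\int|x|^2Y|u|^2$ is genuinely dangerous, since $Y$ is only bounded and not small. We would control it by writing $|x|^2 Y |u|^2=\langle x\rangle^{1/2}Y\cdot\langle x\rangle^{3/2}\ldots$, or rather by treating $Y$ as a low-regularity multiplier. Using \cref{Lem:ProductRule}, \cref{Cor:action_V} and duality, we bound
$$\left|\int |x|^2 Y|u|^2\right|\lesssim |Y|_{\W^{1/4,\infty}}|\rho|_{\L^\infty}^2|v|_{\W^{1-\frac18,2}}^2\ \text{(schematically)},$$
gaining a small amount of regularity from the $\W^{1/4,\infty}$ norm of $Y$. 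This is the step I expect to be the main obstacle: getting a sub-critical power $|v|_{\W^{\sigma,2}}$ with $\sigma<1$ out of a term that is formally of order $|xv|^2$. If necessary one uses that the $|x|^2Y$ term arises in \cref{Eq:AndersonAbstract} as $xY\cdot xv$ tested against $u$, which effectively costs only $|x|^{2-1/4}$ once the $\W^{1/4,\infty}$ decay of $Y$ (recall $\langle x\rangle^{1/4}Y\in\L^\infty$ by the norm equivalence) is used. This yields a bound by $|Y|_{\W^{1/4,\infty}}\rho_{\max}^2|v|_{\W^{1-1/8,2}}^2$. We then interpolate, $|v|_{\W^{7/8,2}}^2\leqslant|v|_{\W^{1,2}}^{7/4}|v|_{\L^2}^{1/4}$, and apply Young's inequality with exponents $(8/7,8)$. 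With $\eps=(\inf\rho)^2/4$, this gives the absorption term $\tfrac{(\inf\rho)^2}{4}|v|_{\W^{1,2}}^2$ plus a constant of the form $|Y|_{\W^{1/4,\infty}}^8(\sup\rho)^{16}(\inf\rho)^{-14}|v|_{\L^2}^2$. Since $\sup\rho,(\inf\rho)^{-1}\leqslant\e^{|Y|_{\L^\infty}}$, this is $\lesssim\e^{32|Y|_{\L^\infty}}|Y|^8_{\W^{1/4,\infty}}|u|_{\L^2}^2$ after converting $|v|_{\L^2}\leqslant(\inf\rho)^{-1}|u|_{\L^2}$. This explains the powers appearing in $\delta_\theta$.

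\textbf{The $Z$-term.} By \cref{Cor:ProductRuleNegPosReg} and duality,
$$|\langle Zu,u\rangle|\leqslant C|Z|_{\W^{-1/4,\infty}}|\rho|_{\H^{1/4,\infty}}^2|v|_{\W^{1/4,2}}^2,$$
as in the continuity estimate \cref{Eq:GenContinuityEstimate1particle}. We interpolate, $|v|_{\W^{1/4,2}}^2\leqslant|v|_{\W^{1,2}}^{1/2}|v|_{\L^2}^{3/2}$, and apply Young's inequality with exponents $(4,4/3)$ to absorb $\tfrac{(\inf\rho)^2}{4}|v|_{\W^{1,2}}^2$. The leftover is $\lesssim|Z|^{4/3}|\rho|^{8/3}(\inf\rho)^{-2/3}|v|_{\L^2}^2$, which is again dominated (using $x^{4/3}\leqslant1+x^2$, with the harmless additive constant folded in) by $c\,\e^{32|Y|_{\L^\infty}}|Z|^2_{\W^{-1/4,\infty}}|\rho|^4_{\H^{1/4,\infty}}|u|_{\L^2}^2$.

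\textbf{Conclusion.} Summing the three estimates gives $a_\theta(u,u)+\delta_\theta|u|_{\L^2}^2\geqslant\tfrac{(\inf\rho)^2}{2}|v|_{\W^{1,2}}^2=\tfrac{(\inf\rho)^2}{2}|u|_{\mathcal{D}^{1,2}}^2$, with $c$ universal. This holds after possibly enlarging $c$ to absorb the $\L^2$ part of the $\W^{1,2}$ norm, using $|v|_{\L^2}\leqslant\e^{|Y|_{\L^\infty}}|u|_{\L^2}$.
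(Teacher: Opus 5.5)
Your route is the paper's: the same three-way split, the same bound $C(\sup\rho)^2|Y|_{\W^{1/4,\infty}}|v|_{\W^{7/8,2}}^2$ for the $Y$-correction (cleanly, $|x|^2|Y|\leqslant\langle x\rangle^{7/4}|\langle x\rangle^{1/4}Y|_{\L^\infty}$ plus \cref{Cor:action_V}), and the same interpolation with Young $(8/7,8)$ producing $|Y|^8\e^{32|Y|_{\L^\infty}}$. One step fails as written, though: the $Z$-term. Write $K=C|Z|_{\W^{-1/4,\infty}}|\rho|_{\H^{1/4,\infty}}^2$. Going through $|v|_{\W^{1/4,2}}^2\leqslant|v|_{\W^{1,2}}^{1/2}|v|_{\L^2}^{3/2}$ and Young $(4,4/3)$ leaves $K^{4/3}(\inf\rho)^{-2/3}|v|_{\L^2}^2$. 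This is \emph{not} bounded by $c\,\e^{32|Y|_{\L^\infty}}K^2|u|_{\L^2}^2$ when $K$ is small; take $Y=0$ and $Z\to0$, where $\delta_\theta=c|Z|^2_{\W^{-1/4,\infty}}$. Your patch $x^{4/3}\leqslant1+x^2$ adds a term $c\,\e^{32|Y|_{\L^\infty}}|u|_{\L^2}^2$ that the lemma does not allow. The stated $\delta_\theta$ vanishes at $Y=0$, $Z=0$, where indeed $a_\theta(u,u)=|u|_{\W^{1,2}}^2$ and no correction is needed. The same objection applies to your closing step of ``enlarging $c$ to absorb the $\L^2$ part of the $\W^{1,2}$ norm''.

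Both points are easy to repair. First, there is no $\L^2$ part to absorb. With the paper's normalization, $|v|_{\W^{1,2}}^2=\langle -Hv,v\rangle=|\nabla v|_{\L^2}^2+|xv|_{\L^2}^2$ exactly, so your first step already gives $(\inf\rho)^2|v|_{\W^{1,2}}^2$ with no error. You may not pass to a merely equivalent norm, since that would also spoil the constant $\tfrac12$ in the conclusion. Second, for the $Z$-term, do as the paper does and use the weaker $\W^{1/2,2}$ norm. Equivalently, use $|v|_{\L^2}\leqslant|v|_{\W^{1,2}}$ (valid since $-H\geqslant d\geqslant1$) to get $|v|_{\W^{1,2}}^{1/2}|v|_{\L^2}^{3/2}\leqslant|v|_{\W^{1,2}}|v|_{\L^2}$. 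Then Young with exponents $(2,2)$ gives $K|v|_{\W^{1,2}}|v|_{\L^2}\leqslant\tfrac{(\inf\rho)^2}{4}|v|_{\W^{1,2}}^2+\tfrac{K^2}{(\inf\rho)^2}|v|_{\L^2}^2$. Combined with $|v|_{\L^2}\leqslant(\inf\rho)^{-1}|u|_{\L^2}$ and $(\inf\rho)^{-1}\leqslant\e^{|Y|_{\L^\infty}}$, this yields exactly the $|Z|^2|\rho|^4$ term of $\delta_\theta$. Your sharper $\W^{1/4,2}$ estimate is true, but it is the wrong homogeneity for the stated lemma.
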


\begin{proof}

    Let $u=\rho v\in\rho\W^{1,2}$, by duality, \cref{Lem:ProductRule,Cor:ProductRuleNegPosReg,Cor:action_V}, Sobolev embeddings and interpolation, it holds for some absolute constants $C,C'$ varying from line to line,
    \begin{align*}
        a_\theta(u,u)&\geqslant (\inf \rho)^2|u|_{\mathcal{D}^{1,2}}^2 - C(\sup \rho)^2|Y|_{\W^{\frac{1}{4},\infty}}|u|_{\mathcal{D}^{\frac{7}{8},2}}^2 - C'|Z|_{\W^{-\frac{1}{4},\infty}}|\rho|_{\H^{\frac{1}{4},\infty}}^2|u|_{\mathcal{D}^{\frac{1}{2},2}}^2\\
        &\geqslant (\inf \rho)^2|u|_{\mathcal{D}^{1,2}}^2 - C(\sup \rho)^{\frac{9}{4}}|Y|_{\W^{\frac{1}{4},\infty}}|u|_{\mathcal{D}^{1,2}}^{\frac{7}{4}}|u|_{\L^2}^{\frac{1}{4}} - C'|Z|_{\W^{-\frac{1}{4},\infty}}(\sup\rho)|\rho|_{\H^{\frac{1}{4},\infty}}^2|u|_{\mathcal{D}^{1,2}}|u|_{\L^2}\\
        &\geqslant \frac{(\inf \rho)^2}{2}|u|_{\mathcal{D}^{1,2}}^2 - C\frac{(\sup \rho)^{18}}{(\inf \rho)^{14}}|Y|_{\W^{\frac{1}{4},\infty}}^{8}|u|_{\L^2}^2 - C'\frac{(\sup\rho)^2}{(\inf \rho)^2}|Z|_{\W^{-\frac{1}{4},\infty}}^2|\rho|_{\H^{\frac{1}{4},\infty}}^4|u|_{\L^2}^2\\
        &\geqslant \frac{(\inf \rho)^2}{2}|u|_{\mathcal{D}^{1,2}}^2- C\e^{32|Y|_{\L^\infty}}\left(|Y|_{\W^{\frac{1}{4},\infty}}^{8}+|Z|_{\W^{-\frac{1}{4},\infty}}^2|\rho|_{\H^{\frac{1}{4},\infty}}^4\right)|u|_{\L^2}^2
    \end{align*}
    
\end{proof}

Hence, Theorem 6.2.6 in \cite{KatoPerturbationBook} implies there exists a unique self-adjoint operator $-A_\theta$ such that $\left\langle-A_\theta u_1,u_2\right\rangle = a_\theta(u_1,u_2)$ for all $u_1,u_2\in\Q\left(A_\theta\right)=\mathcal{D}^{1,2}$.

\subsection{The Anderson-Hartree equation}

In this section, $-A=-A_\theta$ for some $\theta\in\Theta$, with $\Q(A)=\mathcal{D}^{1,2}=\rho\W^{1,2}$. By Sobolev embeddings in dimension $d\leqslant 3$, as $\rho\in\L^\infty$, $\mathcal{D}^{1,2}$ is compactly embedded in $\L^2\cap\L^4$. Moreover, \cref{Lem:GenQuasiCoercivEstimate1particle,Eq:GenContinuityEstimate1particle} show one can take $$c_{A} = c_\theta=\frac{(\inf \rho)^2}{2},$$ $$C_A = C_\theta = \left(1+|Y|_{\L^\infty}\right)|\rho|_{\L^\infty}^2 + C|Z|_{\W^{-\frac{1}{4},\infty}}|\rho|_{\H^{\frac{1}{4},\infty}}^2$$ and  $$\delta_A=\delta_\theta =C\e^{32|Y|_{\L^\infty}}\left(|Y|_{\W^{\frac{1}{4},\infty}}^{8}+|Z|_{\W^{-\frac{1}{4},\infty}}^2|\rho|_{\H^{\frac{1}{4},\infty}}^4\right)$$ in \cref{Hyp:Q(A)}, for $C$ large enough and independent of $\theta$. In this context, \cref{Eq:Hartree} becomes \cref{Eq:AndersonHartree}. However, as $\Q(A_\theta)$ depends on $\theta$, the definition of the bracket $[w]$ depends on $\theta$. Yet, remark that
\begin{equation}\label{Eq:Equivalence[w2]theta}
    \e^{-2|Y|_{\L^\infty}}\sup_{v\in\W^{1,2}}\frac{\left|w*|v|^2\right|_{\L^\infty}}{|v|_{\W^{1,2}}^2}\leqslant\sup_{u\in\mathcal{D}^{1,2}}\frac{\left|w*|u|^2\right|_{\L^\infty}}{|u|_{\mathcal{D}^{1,2}}^2}\leqslant\e^{2|Y|_{\L^\infty}}\sup_{v\in\W^{1,2}}\frac{\left|w*|v|^2\right|_{\L^\infty}}{|v|_{\W^{1,2}}^2}.
\end{equation}
Hence, $\mathfrak{W}$ does in fact not depend on $\theta$.\\

Let us point out that, as a consequence of Sobolev embeddings, we can exhibit a lot of elements of $\mathfrak{W}$. For example, in dimension 1, $\mathcal{D}^{1,2}=\W^{1,2}\subset\L^2\cap\L^\infty$ thus for any  $w\in\L^2+\L^\infty$, $\left[w^2\right]<+\infty$. Similarly, in dimension 2, $\mathcal{D}^{1,2}\subset\L^2\cap\L^p$ for $p\in[2,+\infty)$, thus any  $w\in\L^{2+}+\L^\infty$ verifies $\left[w^2\right]<+\infty$. Moreover, for $\eps>0$ and $q\in[2,+\infty)$, still in dimension 2, it holds
$$\left|w^2*|u|^2\right|_{\H^{\eps,q}} \leqslant C(\eps)|w|_{\H^\eps}^2|u|_{\L^{2q}}^{2}.$$
Thus, for any $w\in\H^{0+}+\L^\infty$, $\left[w^2\right]<+\infty$.\\

In dimension 3, Sobolev embeddings are more restricted and $\mathcal{D}^{1,2}\subset\L^2\cap\L^6$. Using purely Sobolev embeddings, we see $\left[w^2\right]<+\infty$ for any $w\in\L^3+\L^\infty$. Moreover, for $p\in[2,3)$, $\eps>0$ and $q\in[2,+\infty)$, it holds
$$\left|w^2*|u|^2\right|_{\H^{\eps,q}} \leqslant C(\eps,p)|w|_{\H^{\eps,p}}^2|u|_{\L^{2r}}^{2}$$
with $\frac{2}{p}+\frac{2}{r}=1+\frac{1}{q}$. Thus $\left[w^2\right]<+\infty$ for any $w\in\H^{\frac{6}{p}-2+,p}$. One can go one step further using Hardy's inequality
\begin{equation}\label{Eq:HardyInequality}
    \forall 1\leqslant p<3,\; \forall f\in\Dot{\H}^{1,p},\; \left|\frac{f}{|x|}\right|_{\L^p}\leqslant C(p)|\nabla f|_{\L^p}.
\end{equation}
Let $p\in[4,+\infty]$, $\lambda\in\L^p(\R^3)$ and $\alpha\in[0,1-\frac{3}{p}]$. Then, using \cref{Eq:HardyInequality}, one can show $w=\lambda|x|^{-\alpha}$ verifies $\left[w^2\right]<+\infty$. Using the fact that $\mathfrak{W}$ is a vector space, the three criteria above give us a wide family of interaction kernels belonging to $\mathfrak{W}$ in dimension 3. In particular, Coulomb's potential $|x|^{-1}$ belongs to $\mathfrak{W}$.\\

In order to apply \cref{Prop:PicklKnowles} and prove \cref{Th:MeanFieldLimitAndersonHartree}, we now check that one can bound the $\mathcal{D}^{1,2}$-norm of solutions locally uniformly in $\theta$. Remark that for $u_0^\theta=\rho v_0$ with a fixed $v_0\in\W^{1,2}$, the unique global solution $u^{\theta,w}$ of \cref{Eq:AndersonHartree} starting from $u_0^\theta$, given by \cref{Cor:GWPQ(A)}, verifies
$$\left|u^{\theta,w}\right|_{\L^\infty_t\mathcal{D}^{1,2}_x}^2\leqslant C\frac{1}{c_\theta}\left((1+C_\theta)|v_0|_{\W^{1,2}}^2 + \left(1+\frac{1}{c_\theta}\right) \left\{w^2\right\}\e^{8|Y|_{\L^\infty}}|v_0|_{\L^2}^6+\delta_\theta\e^{2|Y|_{\L^\infty}}|v_0|_{\L^2}^2\right)$$
by \cref{Eq:ControlNormQ(A)u}, with 
$$\{w\}=\sup_{v\in\W^{1,2}}\frac{\left|w*|v|^2\right|_{\L^\infty}}{|v|_{\W^{1,2}}^2}.$$
the $\theta$-independent bracket. Remark that $\frac{1}{c_\theta}$, $C_\theta$ and $\delta_\theta$ are bounded over bounded sets of $\Theta$. Thus, for any bounded set $B\subset\Theta\times\mathfrak{W}$, there exists $C(B)>0$ such that
\begin{equation}\label{Eq:ControlNormD12theta}
    \sup_{(\theta,w)\in B}\left|u^{\theta,w}\right|_{\L^\infty_t\mathcal{D}^{1,2}_x}^2\leqslant C(B)\left(|v_0|_{\W^{1,2}}^2+|v_0|_{\L^2}^6\right).
\end{equation}

\section{The mean-field limit}

This section is devoted to the proof of \cref{Th:MeanFieldLimitAndersonHartree}.

\subsection{The many-body hamiltonian}

Let $U=\R^d$ with $d\leqslant 3$. Let $w\in\mathfrak{W}$ and $N\geqslant 2$. We now show that $$\mathcal{H}_{N,\theta,w}=\sum_{j=1}^N (A_\theta)_{x_j} + \frac{1}{N}\sum_{1\leqslant i<j\leqslant N}w(x_i-x_j)$$ verifies assumptions of \cref{Prop:PicklKnowles} for any $\theta\in\Theta$. Its associated bilinear form is given for $u_1,u_2\in\rho^{\otimes N}\W^{1,2}(U^N)$ by
\begin{equation}
    q_{N,\theta}(u_1,u_2) = \sum_{j=1}^N \re\int_{U^{N-1}} a_\theta(u_1(\hat{x}_j),u_2(\hat{x}_j))\d\hat{x}_j+ \frac{1}{N}\sum_{1\leqslant i<j\leqslant N}\re\int_{U^N}w(x_i-x_j)u_1(x)\overline{u_2(x)}\d x,
\end{equation}
where $\d\hat{x}_j = \bigotimes_{i\neq j}\d x_i$ and for $f:\R^{Nd}\to\C$, $f(\hat{x}_j):\R^{(N-1)d}\to\C$ is defined such that $f(\hat{x}_j)(x_j)=f(x)$. Let $$\|a_\theta\| = \sup_{\Atop{u_1,u_2\in\mathcal{D}^{1,2}}{u_1,u_2\neq 0}}\frac{|a_\theta(u_1,u_2)|}{|u_1|_{\mathcal{D}^{1,2}}|u_2|_{\mathcal{D}^{1,2}}}<+\infty,$$
then, it holds
\begin{align*}
    |q_{N,\theta}(u_1,u_2)| &\leqslant \|a_\theta\|\sum_{j=1}^N \int_{U^{N-1}} |u_1(\hat{x}_j)|_{\mathcal{D}^{1,2}_{x_j}}|u_2(\hat{x}_j)|_{\mathcal{D}^{1,2}_{x_j}}\d\hat{x}_j \\
    &+ \frac{1}{N}\sum_{1\leqslant i<j\leqslant N}\int_{U^{N-1}}|w*_{x_j}(u_1(\hat{x}_j)u_2(\hat{x}_j))|_{\L^\infty_{x_j}} \d\hat{x}_j.
\end{align*}

Now, by Cauchy-Schwarz,
\begin{align*}
    \sum_{j=1}^N\int_{U^{N-1}} |u_1(\hat{x}_j)|_{\mathcal{D}^{1,2}_{x_j}}|u_2(\hat{x}_j)|_{\mathcal{D}^{1,2}_{x_j}}\d\hat{x}_j &\leqslant \left(\sum_{j=1}^N\int_{U^N}|\nabla_{x_j}v_1|^2+|x_jv_1|^2\d x\right)^{\frac{1}{2}}\left(\sum_{j=1}^N\int_{U^N}|\nabla_{x_j}v_2|^2+|x_jv_2|^2\d x\right)^{\frac{1}{2}}\\
    &=|u_1|_{\mathcal{D}^{1,2}}|u_2|_{\mathcal{D}^{1,2}}.
\end{align*}
Furthermore, by Young's inequality and \cref{Eq:Def[w]}, for any $\eta>0$, it holds
\begin{equation}\label{Eq:ControlWmanybody}
    \begin{split}
        \int_{U^{N-1}}|w*_{x_j}(u_1(\hat{x}_j)u_2(\hat{x}_j))|_{\L^\infty_{x_j}} \d\hat{x}_j &\leqslant \frac{\eta\left[w^2\right]}{2}\int_{U^{N-1}} |u_1(\hat{x}_j)|_{\mathcal{D}^{1,2}_{x_j}}|u_2(\hat{x}_j)|_{\mathcal{D}^{1,2}_{x_j}}\d\hat{x}_j \\
        &+ \frac{1}{2\eta}|u_1|_{\L^2}|u_2|_{\L^2}.
    \end{split}
\end{equation}
Thus,
\begin{align*}
    |q_{N,\theta}(u_1,u_2)| &\leqslant \|a_\theta\||u_1|_{\mathcal{D}^{1,2}}|u_2|_{\mathcal{D}^{1,2}} + \frac{\left[w^2\right]}{2}|u_1|_{\mathcal{D}^{1,2}}|u_2|_{\mathcal{D}^{1,2}}+\frac{N}{2}|u_1|_{\L^{2}}|u_2|_{\L^2}\\
    &\leqslant \left(\|a_\theta\|+\frac{\left[w^2\right]}{2}+\frac{N}{2}(\sup \rho)^2\right)|u_1|_{\mathcal{D}^{1,2}}|u_2|_{\mathcal{D}^{1,2}}.
\end{align*}
Moreover, \cref{Lem:GenQuasiCoercivEstimate1particle,Eq:ControlWmanybody} imply
\begin{align*}
    q_{N,\theta}(u,u)+N\delta_{\theta}|u|_{\L^2}^2&\geqslant \frac{(\inf \rho)^2}{2}\sum_{j=1}^N\int |u(\hat{x}_j)|_{\mathcal{D}^{1,2}}^2\d\hat{x}_j -\frac{1}{N}\sum_{1\leqslant i<j\leqslant N}\int_{U^{N-1}}|w*_{x_j}(u_1(\hat{x}_j)u_2(\hat{x}_j))|_{\L^\infty_{x_j}} \d\hat{x}_j\\
    &\geqslant \frac{(\inf \rho)^2}{2}|u|_{\mathcal{D}^{1,2}}^2  - \frac{\eta\left[w^2\right]}{2}\sum_{j=1}^N\int_{U^{N-1}} |u(\hat{x}_j)|_{\mathcal{D}^{1,2}}^2\d\hat{x}_j - \frac{N-1}{4\eta}|u|_{\L^2}^2\\
    &\geqslant \frac{(\inf \rho)^2}{2}|u|_{\mathcal{D}^{1,2}}^2  - \frac{\eta\left[w^2\right]}{2}|u|_{\mathcal{D}^{1,2}}^2 - \frac{N-1}{4\eta}|u|_{\L^2}^2.
\end{align*}
Choosing $\eta = \frac{(\inf \rho)^2}{2\left[w^2\right]}$ leads to
\begin{equation}\label{Eq:GenManyBodyQuasiCoercivEstimate}
    \forall u\in\rho^{\otimes N}\W^{1,2}(U^N),\; \frac{(\inf \rho)^{2}}{4}|u|_{\mathcal{D}^{1,2}}^2 \leqslant q_{N,\theta}(u,u)+\delta_{N,\theta,w}|u|_{\L^2}^2 \leqslant C_{N,\theta,w} |u|_{\mathcal{D}^{1,2}}^2
\end{equation}
with 
\begin{equation}\label{Eq:deltaNtheta}
    \delta_{N,\theta,w} = N\left(\delta_\theta + \frac{\left[w^2\right]}{2(\inf \rho)^2}\right)
\end{equation}
and 
\begin{equation}\label{Eq:CNtheta}
    C_{N,\theta,w} = \|a_\theta\|+\frac{\left[w^2\right]}{2}+\frac{N}{2}(\sup \rho)^2 + \delta_{N,\theta,w}.
\end{equation}
Let us point out that $\delta_{N,\theta,w}$ and $C_{N,\theta,w}$ are bounded on bounded sets of $\Theta\times\mathfrak{W}$. Moreover, the symmetry of $w$ and \cref{Eq:GenManyBodyQuasiCoercivEstimate} imply $-\mathcal{H}_{N,\theta,w}$ is a lower bounded self-adjoint operator on $(\L^2)^{\otimes N}$ of form domain $\rho^{\otimes N}\W^{1,2}(U^N)$ by Theorem 6.2.6 in \cite{KatoPerturbationBook}. Remark that $\rho^{\otimes N}\W^{1,2}(U^N)$ is also the form domain of $A_{\theta,x_1}+\cdots+A_{\theta,x_N}$, by taking $w=0$. Thus, $-\mathcal{H}_{N,\theta,w}$ satisfies the hypothesis of \cref{Prop:PicklKnowles}.

\subsection{The many-body Schrödinger equation}

Let $N\geqslant 2$, $\Psi_{N,\theta}^0=\rho^{\otimes N}\Phi_N^0\in\rho^{\otimes N}\W^{1,2}(U^N)$ and $\Psi_{N,\theta,w}(t) = \e^{\complexI t \mathcal{H}_{N,\theta,w}}\Psi_{N,\theta}^0$ be the solution of \cref{Eq:ManyBodySchrodinger} starting from $\Psi_{N,\theta}^0$. Finally, let $\Phi_{N,\theta,w}(t)=\left(\rho^{-1}\right)^{\otimes N}\Psi_{N,\theta,w}(t)$. Then, by \cref{Eq:GenManyBodyQuasiCoercivEstimate} and Stone's theorem \cite{StoneOneParameterGroup}, it holds
\begin{equation}\label{Eq:BoundPhiNtheta}
    |\Phi_{N,\theta,w}(t)|_{\W^{1,2}}^2 \leqslant \frac{4C_{N,\theta,w}}{(\inf \rho)^2}|\Phi_N^0|_{\W^{1,2}}^2.
\end{equation}
Remark that $C_{N,\theta,w}$ is bounded on bounded sets of $\Theta\times\mathfrak{W}$. Thus, we prove the following lemma.

\begin{lemme}\label[lemme]{Lem:ContinuityPsiNtheta}

    Let $N\geqslant 2$, $\Psi_{N,\theta}^0=\rho^{\otimes N}\Phi_N^0\in\rho^{\otimes N}\W^{1,2}(U^N)$ and $\Psi_{N,\theta,w}(t) = \e^{\complexI t \mathcal{H}_{N,\theta,w}}\Psi_{N,\theta}^0$. For any $T>0$, the map 
    $$(\theta,w)\in\Theta\times\mathfrak{W}\mapsto \Psi_{N,\theta,w}\in\mathcal{C}\left([-T,T],(\L^2)^{\otimes N}\right)$$
    is continuous.
    
\end{lemme}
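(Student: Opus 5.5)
The plan is to prove sequential continuity, which suffices because $\Theta$ is a Fréchet space and $\mathfrak{W}$ is a metrizable quasi-Banach space. The argument is compactness plus uniqueness. Fix $T>0$ and a sequence $(\theta_n,w_n)\to(\theta,w)$ in $\Theta\times\mathfrak{W}$, and write $\rho_n=\e^{Y_n}$, $\Psi_n=\Psi_{N,\theta_n,w_n}$ and $\Phi_n=(\rho_n^{-1})^{\otimes N}\Psi_n$. Since $Y_n\to Y$ in $\L^\infty$, we have $\rho_n^{\pm1}\to\rho^{\pm1}$ uniformly. Hence the initial data $\Psi^0_{N,\theta_n}=\rho_n^{\otimes N}\Phi_N^0$ converge in $(\L^2)^{\otimes N}$. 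Because $C_{N,\theta,w}$ and $\inf\rho$ are controlled on bounded sets, \cref{Eq:BoundPhiNtheta} gives that $(\Phi_n)$ is bounded in $\L^\infty([-T,T],\W^{1,2}(U^N))$, uniformly in $n$. Unitarity gives $|\Psi_n(t)|_{\L^2}=|\Psi^0_{N,\theta_n}|_{\L^2}$.

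\textbf{Compactness in time.} It is convenient to work with $\Phi_n$, whose energy space $\W^{1,2}(U^N)$ does not depend on $n$. The equation in weak form reads
$$\langle \complexI\p_t\Psi_n,\rho_n^{\otimes N}\chi\rangle = q_{N,\theta_n}(\Psi_n,\rho_n^{\otimes N}\chi)$$
for test functions $\chi\in\W^{1,2}(U^N)$. This, together with the continuity bound on $q_{N,\theta}$ (constant bounded on bounded sets), shows that $\p_t\big((\rho_n^{2})^{\otimes N}\Phi_n\big)$ is bounded in $\L^\infty([-T,T],\W^{-1,2}(U^N))$. Multiplication by $(\rho_n^{2})^{\otimes N}$ is a uniformly bounded isomorphism of $\L^2$ that converges uniformly. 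Moreover $\W^{1,2}(U^N)$ embeds compactly in $\L^2(U^N)$ thanks to the harmonic weight $|x|^2$. By the Aubin--Lions--Simon lemma we can therefore extract a subsequence with $\Psi_n\to\Psi$ in $\mathcal{C}([-T,T],(\L^2)^{\otimes N})$, and along it $\Phi_n(t)\rightharpoonup\rho^{-\otimes N}\Psi(t)$ weakly in $\W^{1,2}$ for all $t$. In particular $\Psi\in\L^\infty([-T,T],\rho^{\otimes N}\W^{1,2})$.

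\textbf{Identification of the limit (main obstacle).} We must pass to the limit in $q_{N,\theta_n}(\Psi_n,\rho_n^{\otimes N}\chi)$ for a fixed smooth $\chi$. In the variables $\Phi_n$ the one-body part is
$$\re\int\nabla\Phi_n\cdot\overline{\nabla\chi}\,(\rho_n^2)^{\otimes}+|x_j|^2(1-Y_n)\rho_n^2\Phi_n\overline{\chi}\,\d x-\langle Z_n\rho_n^2\Phi_n,\chi\rangle.$$
\begin{itemize}
\item The first two terms converge because the coefficients converge uniformly and $\Phi_n$ converges weakly in $\W^{1,2}$.
\item For the $Z_n$ term, $Z_n\to Z$ in $\W^{-1/4,\infty}$ and $\rho_n^2\to\rho^2$ in $\H^{1/4,\infty}$, since the map $Y\mapsto\e^{2Y}$ is continuous there. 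With \cref{Lem:ProductRule,Cor:ProductRuleNegPosReg}, this reduces the term to a pairing of a strongly convergent object in $\W^{-1/2,2}$-type norms with $\Phi_n$, which is compact in $\W^{1/2,2}$ by interpolation of weak $\W^{1,2}$ and strong $\L^2$ convergence.
\item For the interaction, split $w_n=w+(w_n-w)$. The piece $w_n-w$ is controlled by \cref{Eq:ControlWmanybody} with $[\,(w_n-w)^2]\to0$, which uses the definition of the quasi-norm on $\mathfrak{W}$ together with the uniform $\W^{1,2}$ bound. The piece with fixed $w$ converges by the same bound applied to $\Phi_n-\Phi$ interpolated against strong $\L^2$ convergence.
\end{itemize}
Hence $\Psi$ is a weak solution of the many-body equation for $(\theta,w)$, with initial datum $\Psi^0_{N,\theta}$, lying in $\mathcal{C}(\L^2)\cap\L^\infty(\Q)$.

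\textbf{Uniqueness and conclusion.} A weak solution in $\L^\infty([-T,T],\Q(\mathcal{H}_{N,\theta,w}))$ of the linear equation with self-adjoint generator is unique. One sees this by regularizing with $P_M=\indic{|\mathcal{H}_{N,\theta,w}|\leqslant M}$ as in the proof of \cref{Lem:MassConservation}: the difference of two solutions has constant $\L^2$ norm, which is zero initially. So $\Psi=\e^{\complexI t\mathcal{H}_{N,\theta,w}}\Psi^0_{N,\theta}$. Since every subsequence has a further subsequence converging to this same limit in $\mathcal{C}([-T,T],(\L^2)^{\otimes N})$, the whole sequence converges, which is the claimed continuity. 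I expect the delicate point to be the $Z_n$ term in the identification step, where the negative regularity of $Z_n$ must be absorbed by the compactness of $\Phi_n$ in an intermediate space.
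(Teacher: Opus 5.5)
Your proposal is correct and follows the paper's proof essentially step for step: the uniform $\W^{1,2}$ bound \cref{Eq:BoundPhiNtheta} on $\Phi_n$, Aubin--Lions--Simon compactness, term-by-term passage to the limit in the weak formulation written in the $\Phi$ variables (gradient, harmonic, $Z$ and interaction terms), then uniqueness of the limit problem combined with the subsequence principle. Your variations are minor and sound, and slightly more careful than the paper: you bound $\p_t\big((\rho_n^2)^{\otimes N}\Phi_n\big)$ rather than $\p_t\Phi_n$ in $\W^{-1,2}$, which avoids needing $\rho_n^{-2}$ to be a multiplier on $\W^{1,2}$ (not available for $d\geqslant 2$); you handle $w_n-w$ through the bracket part of the $\mathfrak{W}$ quasi-norm instead of $\L^2+\L^\infty$ convergence plus $\L^4$ compactness; and you spell out the uniqueness via spectral cut-offs.
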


\begin{proof}

    Let $\theta,\theta_\eps\in\Theta$ with $\theta_\eps$ converging to $\theta$ and $w,w_\eps\in\mathfrak{W}$ with $w_\eps$ converging to $w$. Then, by \cref{Eq:BoundPhiNtheta}, $(\Phi_{N,\theta_\eps,w_\eps})_{\eps}$ is bounded in $\L^\infty(\R,\W^{1,2}(U^N))$. Using \cref{Eq:GenManyBodyQuasiCoercivEstimate,Eq:BoundPhiNtheta,Eq:ManyBodySchrodinger}, it follows $(\p_t\Phi_{N,\theta_\eps,w_\eps})_{\eps}$ is bounded in $\L^\infty(\R,\W^{-1,2}(U^N))$. Thus, for any $T>0$, by Aubin-Lions-Simon lemma \cite{SimonAubinLions} and Banach-Alaoglu theorem, there exists $\Phi_N\in\L^\infty([-T,T],\W^{1,2}(U^N))$ such that, up to a subsequence, $\Phi_{N,\theta_\eps,w_\eps}$ converges to $\Phi_N$ strongly in $\L^\infty([-T,T],\W^{s,2}(U^N))$ ($s<1$) and $\Phi_{N,\theta_\eps,w_\eps}(t)$ converges to $\Phi_N(t)$ weakly in $\W^{1,2}(U^N)$ for almost every $t\in[-T,T]$. For any $\Phi\in\W^{1,2}(U^N)$, it holds
    \begin{align*}
        \left(\complexI\rho^{\otimes N}_{\theta_\eps}\left(\Phi_{N,\theta_\eps,w_\eps}(t)-\Phi_{N,\theta_\eps,w_\eps}(0)\right),\rho_{\theta_\eps}^{\otimes N}\Phi\right) &= \int_0^t \left(\complexI\mathcal{H}_{N,\theta_\eps,w_\eps}\rho^{\otimes N}_{\theta_\eps}\Phi_{N,\theta_\eps,w_\eps}(s),\rho_{\theta_\eps}^{\otimes N}\Phi\right)\d s\\
        &= \int_0^t\sum_{j=1}^N \re\int_{U^N}\left(\nabla_{x_j}\Phi_{N,\theta_\eps,w_\eps}\cdot\overline{\nabla_{x_j}\Phi}\right)\rho_{\theta_\eps}(x_j)^2\d x\d s\\
        &+ \int_0^t\sum_{j=1}^N \re\int_{U^N}\left(|x_j|^2(1-Y_\eps(x_j))\Phi_{N,\theta_\eps,w_\eps}\overline{\Phi}\right)\rho_{\theta_\eps}(x_j)^2\d x\d s\\
        &- \int_0^t\sum_{j=1}^N \left\langle Z_{\eps}(x_j) \rho_{\theta_\eps}^{\otimes N}\Phi_{N,\theta_\eps,w_\eps}, \rho_{\theta_\eps}^{\otimes N}\Phi\right\rangle\d s\\
        &- \int_0^t\frac{1}{N}\sum_{1\leqslant i<j\leqslant N}\re\int_{U^N} w_\eps(x_i-x_j)\Phi_{N,\theta_{\eps},w_\eps}(x)\overline{\Phi(x)}\left(\rho_{\theta_\eps}^{\otimes N}(x)\right)^2 \d x\d s.
    \end{align*}
    Now, remark that all terms inside the time integrals are bounded uniformly in $\eps$ by \cref{Eq:BoundPhiNtheta,Lem:ProductRule,Cor:ProductRuleNegPosReg,Prop:action_d/dx,Cor:action_V,Eq:GenManyBodyQuasiCoercivEstimate}. Thus, by dominated convergence, it is sufficient to prove each term converges for almost all fixed times. First, as $Y_\eps$ converges to $Y$ in $\H^{2-\frac{d}{2}-,\infty}$, $\rho_{\theta_\eps}$ converges to $\rho_{\theta}$ in $\H^{2-\frac{d}{2}-,\infty}$. Thus,
    $$\sum_{j=1}^N \int_{U^N}\nabla_{x_j}\Phi_{N,\theta_\eps,w_\eps}(t)\cdot\overline{\nabla_{x_j}\Phi}\rho_{\theta_\eps}(x_j)^2\d x $$
    converges to 
    $$\sum_{j=1}^N \int_{U^N}\nabla_{x_j}\Phi_{N}(t)\cdot\overline{\nabla_{x_j}\Phi}\rho_{\theta}(x_j)^2\d x $$
    for almost all $t\in[-T,T]$, as $\nabla_{x_j}\Phi_{N,\theta_\eps,w_\eps}(t)$ converges weakly to $\nabla_{x_j}\Phi_{N}(t)$ in $\L^2$ for almost all $t\in[-T,T]$ and $\rho_{\theta_\eps}(x_j)^2 \nabla_{x_j} \Phi$ converges strongly to $\rho_{\theta}(x_j)^2 \nabla_{x_j} \Phi$ in $\L^2$. Similarly,
    $$\sum_{j=1}^N \int_{U^N}|x_j|^2(1-Y_\eps(x_j))\Phi_{N,\theta_\eps,w_\eps}(t)\overline{\Phi}\rho_{\theta_\eps}(x_j)^2\d x $$
    converges to 
    $$\sum_{j=1}^N \int_{U^N}|x_j|^2(1-Y(x_j))\Phi_{N}(t)\overline{\Phi}\rho_{\theta}(x_j)^2\d x $$
    for almost all $t\in[-T,T]$, as $x_j\Phi_{N,\theta_\eps,w_\eps}(t)$ converges weakly to $x_j\Phi_{N}(t)$ in $\L^2$ for almost all $t\in[-T,T]$ and $(1-Y_{\eps}(x_j))\rho_{\theta_\eps}(x_j)^2 x_j \Phi$ converges strongly to $(1-Y(x_j))\rho_{\theta}(x_j)^2 x_j \Phi$ in $\L^2$. Moreover, 
    $$ \sum_{j=1}^N \left\langle Z_{\eps}(x_j) \rho_{\theta_\eps}^{\otimes N}\Phi_{N,\theta_\eps,w_\eps}(t), \rho_{\theta_\eps}^{\otimes N}\Phi\right\rangle$$
    converges to
    $$ \sum_{j=1}^N \left\langle Z(x_j) \rho_{\theta}^{\otimes N}\Phi_{N}(t), \rho_{\theta}^{\otimes N}\Phi\right\rangle,$$
    by \cref{Lem:ProductRule,Cor:ProductRuleNegPosReg}, as $\Phi_{N,\theta_\eps,w_\eps}(t)$ converges strongly in $\W^{0+,2}$, $Z_\eps$ converges strongly in $\W^{0-,\infty}$, $\rho_{\theta}^{\otimes N}$ converges strongly in $\H^{0+,\infty}$ and $\Phi\in\W^{0+,2}$. Finally, as $w_\eps$ converges strongly to $w$ in $\mathfrak{W}$, thus in $\L^2+\L^\infty$, $\Phi_{N,\theta_\eps,w_\eps}(t)$ converges strongly in $\W^{\frac{3}{4},2}\subset\L^2_{\hat{x}_j}(\L^2\cap\L^4)_{x_j}$, by Sobolev embeddings, and $\rho_{\theta_\eps}$ converges strongly in $\L^\infty$, it follows
    $$\frac{1}{N}\sum_{1\leqslant i<j\leqslant N}\int_{U^N} w_\eps(x_i-x_j)\Phi_{N,\theta_{\eps},w_\eps}(t,x)\overline{\Phi(x)}\left(\rho_{\theta_\eps}^{\otimes N}(x)\right)^2 \d x$$
    converges to
    $$\frac{1}{N}\sum_{1\leqslant i<j\leqslant N}\int_{U^N} w(x_i-x_j)\Phi_{N}(t,x)\overline{\Phi(x)}\left(\rho_{\theta}^{\otimes N}(x)\right)^2 \d x.$$
    Thus, by dominated convergence, $\rho_\theta^{\otimes N}\Phi_N$ is a solution of \cref{Eq:ManyBodySchrodinger} associated to $\theta$ and $w$ starting from $\Psi_N^0$. By uniqueness, we deduce $\Psi_{N,\theta_\eps,w_\eps}$ converges to $\Psi_{N,\theta,w}$ in $\L^\infty_{loc}(\R,(\L^2)^{\otimes N})$.

\end{proof}

\subsection{Proof of \cref{Th:MeanFieldLimitAndersonHartree}}

It is well-known (see for example \cite{PicklKnowles,RodnianskiQuantumFluctuations}) that for any $k\in\N^*$, there exists $C_k>0$ such that
\begin{equation}\label{Eq:ControlTraceNormAlpha}
    \Tr\left|\Gamma_{k,N,\theta,w}(t)-\left(|u^{\theta,w}(t)\rangle\langle u^{\theta,w}(t)|\right)^{\otimes k}\right|\leqslant C_k\alpha_{N,\theta,w}(t),
\end{equation}
where 
$$\alpha_{N,\theta,w}(t) = \left|q^{\theta,w}(t)\Psi_{N,\theta,w}(t)\right|_{\L^2}^2$$ with $q^{\theta,w}(t) = 1-|u^{\theta,w}(t,x_1)\rangle\langle u^{\theta,w}(t,x_1)|$. Remark that $\left(|u^{\theta,w}(t)\rangle\langle u^{\theta,w}(t)|\right)^{\otimes k}=\left|u^{\theta,w}(t)^{\otimes k}\right\rangle\left\langle u^{\theta,w}(t)^{\otimes k}\right|$ is a rank one projector, thus 
\begin{equation}\label{Eq:ControlTraceNormByOpNorm}
    \Tr\left|\Gamma_{k,N,\theta,w}(t)-\left(|u^{\theta,w}(t)\rangle\langle u^{\theta,w}(t)|\right)^{\otimes k}\right|\leqslant 2\left|\Gamma_{k,N,\theta,w}(t)-\left(|u^{\theta,w}(t)\rangle\langle u^{\theta,w}(t)|\right)^{\otimes k}\right|_{\mathcal{L}\left((\L^2)^{\otimes N}\right)}
\end{equation}
(see Remark 1.4. in \cite{RodnianskiQuantumFluctuations}). In view of the usual estimate
\begin{equation}\label{Eq:ControlOpNormByTraceNorm}
    \Tr\left|\Gamma_{k,N,\theta,w}(t)-\left(|u^{\theta,w}(t)\rangle\langle u^{\theta,w}(t)|\right)^{\otimes k}\right|\geqslant \left|\Gamma_{k,N,\theta,w}(t)-\left(|u^{\theta,w}(t)\rangle\langle u^{\theta,w}(t)|\right)^{\otimes k}\right|_{\mathcal{L}\left((\L^2)^{\otimes N}\right)},
\end{equation}
the trace-class convergence of marginals is equivalent to the convergence in operator norm.\\

To prove \cref{Th:MeanFieldLimitAndersonHartree},
it is sufficient to show $\alpha_{N,\theta,w}(t)$ converges to $0$ uniformly in $(t,\theta,w)$ in bounded sets of $\R\times\Theta\times\mathfrak{W}$. Now, remark that $\alpha_{N,\theta,w}(0)=0$ as $\Psi_{N,\theta,w}(0)=(u^{\theta,w}(0))^{\otimes N}$. Thus, by \cref{Prop:PicklKnowles,Lem:MassConservation}, for any bounded set $B\subset\R\times\Theta\times\mathfrak{W}$, it holds
\begin{equation}
    \forall (t,\theta,w)\in B,\; \alpha_{N,\theta,w}(t) \leqslant \exp\left(64\sup_{(t,\theta,w)\in B}|t|\left[w^2\right]\left|u^{\theta,w}\right|_{\L^\infty_t\mathcal{D}^{1,2}_x}^2\right) \frac{1}{N}.
\end{equation}
Thus, by \cref{Eq:ControlNormD12theta,Eq:Equivalence[w2]theta,Eq:ControlTraceNormAlpha}, for any $k\in\N^*$ and any bounded set $B\subset\R\times\Theta\times\mathfrak{W}$, 
$$\sup_{(t,\theta,w)\in B}\Tr\left|\Gamma_{k,N,\theta,w}(t)-\left(|u^{\theta,w}(t)\rangle\langle u^{\theta,w}(t)|\right)^{\otimes k}\right|=O\left(\frac{1}{N}\right)$$
as $N$ goes to infinity. Finally, \cref{Lem:ContinuityPsiNtheta} and the Moore-Osgood theorem imply \cref{Th:MeanFieldLimitAndersonHartree}.

\begin{rem}

    Our work can be adapted for many parameterized families of self-adjoint operators. For example, using the construction of Anderson operators of \cite{mouzard2023simple}, one can prove an analogue of \cref{Th:MeanFieldLimitAndersonHartree} for Anderson operators on the torus $\T^d$ ($d\in\{1,2,3\}$) without relying on Strichartz estimates proven in \cite{MouzardZachhuberStrichartz,Zachhuber3dAndersonHartreeMeanField}. Moreover, it gives a stronger result than \cite{Zachhuber3dAndersonHartreeMeanField} as it proves the mean-field limit to Anderson-Hartree equation for more general interaction potentials than the $\L^\infty$ case.
    
\end{rem}

\section*{Acknowledgement}

The author was supported by the ANR project Smooth ANR-22-CE40-0017. The author wants to express their deepest gratitude to Théo Hérouard for the interesting discussions they had on Pickl's method, which were the starting point of this work.

\printbibliography

\end{document}